\documentclass{article} 
\usepackage{iclr2027_conference,times}

\usepackage{amsmath,amsfonts,bm}

\def\eqref#1{equation~\ref{#1}}

\def\1{\bm{1}}

\DeclareMathAlphabet{\mathsfit}{\encodingdefault}{\sfdefault}{m}{sl}
\SetMathAlphabet{\mathsfit}{bold}{\encodingdefault}{\sfdefault}{bx}{n}

\usepackage{hyperref}
\usepackage{natbib}
\usepackage{url}
\usepackage[utf8]{inputenc} 
\usepackage[T1]{fontenc}    
\usepackage{hyperref}       
\usepackage{url}            
\usepackage{booktabs}       
\usepackage{amsfonts}       
\usepackage{nicefrac}       
\usepackage{microtype}      
\usepackage[table]{xcolor}         

\usepackage{amsmath}
\usepackage{amssymb}
\usepackage{mathtools}
\usepackage{amsthm}
\usepackage[most]{tcolorbox}
\usepackage{mdframed}
\usepackage{thmtools}
\usepackage{wrapfig,subcaption}
\usepackage{xurl}
\usepackage{cancel}
\usepackage{algorithm}
\usepackage{algorithmic}

\usepackage{hyperref}
\usepackage[nameinlink,capitalize,noabbrev]{cleveref}
\definecolor{mypink}{rgb}{0.8353,0.251,0.3686}
\hypersetup{colorlinks=true,
    linkcolor=mypink,
    citecolor=mypink,
    filecolor=mypink,
    urlcolor=mypink
}

\theoremstyle{plain}
\newtheorem{theorem}{Theorem}[section]
\newtheorem{proposition}[theorem]{Proposition}
\newtheorem{lemma}[theorem]{Lemma}
\newtheorem{corollary}[theorem]{Corollary}
\newtheorem{example}[theorem]{Example}
\theoremstyle{definition}
\newtheoremstyle{myremark} 
  {3pt}   
  {3pt}   
  {\itshape} 
  {}      
  {\bf}      
  {.}     
  {.5em}  
  {}      

\theoremstyle{myremark}
\newtheorem{remark}{Remark}
\newtheorem{definition}[theorem]{Definition}

\newenvironment{explanation}[1][Explanation]{%
  \begin{trivlist}
  \item[\hskip\labelsep\itshape #1.]\normalfont
}{%
  \hfill\qedsymbol\end{trivlist}
}

\newtcolorbox{definitionbox}{
  enhanced,
  colback=white,
  colframe=black,
  boxrule=0pt,
  borderline west={1pt}{0pt}{black},
  left=8pt,
  right=6pt,
  top=6pt,
  bottom=6pt
}
\colorlet{ourblue}{blue!60}

\usepackage[textsize=tiny]{todonotes}

\title{Fast Differentiable SVD on GPU via \\ Polar Decomposition}

\iclrfinalcopy

\author{%
    Uliana Parkina\thanks{Equal contribution.} \\
    HSE University \\
    \texttt{uliana.parkina@gmail.com}
    \And
    Askar Tsyganov\footnotemark[1] \\
    HSE University \\
    \texttt{atsyganov@hse.ru}
    \And
    Sergei Kudriashov \\
    HSE University
    \And
    Sergey Samsonov \\
    HSE University
    \And
    Maxim Rakhuba$^\dagger$ \\
    HSE University
}

\begin{document}

\maketitle

\begin{abstract}
We present a fully GPU-oriented SVD pipeline based on polar decomposition, motivated by iterative methods that rely solely on matrix multiplications, such as the Newton-Schulz iteration. We show that this approach enables up to a $2\times$ speedup compared to standard implementations. Furthermore, we derive a numerically stable backward pass for the polar decomposition and leverage it to obtain a fully differentiable SVD. Our methods are released as open-source implementations in both PyTorch and JAX: \url{https://github.com/fallnlove/cans_svd}.
\end{abstract}

\section{Introduction}
\label{section:introduction}
Singular value decomposition (SVD) is a basic primitive in numerical linear algebra and a standard tool in machine learning. It underpins PCA~\cite{abdi2010principal,jolliffe2016principal}, global covariance pooling~\cite{wang2017global,wang2020deep}, tensor decompositions~\cite{oseledets2011tensor,vannieuwenhoven2012truncation}, and matrix-factorization-based recommender systems~\cite{cremonesi2010performance,yuan2019singular,rodpysh2023employing}.
In large scale deep learning, SVD is used for model compression and low-rank adaptation in large language models~\cite{meng2024pissa,wang2025svdllm,parkina2025coala}, optimization methods for neural networks~\cite{jordan2024muon,wang2018atomo}, and regularization techniques~\cite{shi2024domain}.

Standard algorithms for computing SVD, such as QR algorithm~\cite{demmel1990accurate} and Jacobi-type methods~\cite{hestenes1958inversion}, and their implementations in LAPACK~\cite{anderson1999lapack} and NVIDIA libraries~\cite{nvidia_cusolver_pdf} follow classical numerical linear algebra designs. These approaches rely on Householder reflections, Givens rotations, and QR factorizations, which scale poorly on modern GPUs. Many production implementations also target single precision arithmetic, which limits performance in lower precision regimes.

A factorization closely related to the SVD is the polar decomposition. In recent years, the polar decomposition has become increasingly popular in machine learning pipelines~\cite{jordan2024muon}, although it has not been fully explored in this context. We therefore turn our attention to this decomposition and develop a faster SVD algorithm via polar decomposition, using iterative methods for computing the polar factor, such as the Chebyshev-accelerated Newton--Schulz (CANS) iteration~\cite{grishina2025accelerating} and related approaches~\cite{amsel2025polar,chen2014stable}. These methods use only matrix multiplications and can run in lower precision, such as tensorfloat-32 (tf32) or bfloat-16 (bf16).

Another important challenge is that existing pipelines suffer from
numerical issues when directly implementing formulas for
differentiating the full SVD. For instance, while
PyTorch~\cite{paszke2019pytorch} supports differentiation through
the SVD, bounded gradient norms are guaranteed only for
differentiation through the singular values, which limits
the use of backpropagation through the full decomposition
despite its potential usefulness in deep
learning~\cite{qinsi2025dobisvd,wang2021robust}.
We address this issue by introducing a pseudo-gradient at
degenerate points and deriving regularized differentiation
formulas, showing that the polar decomposition is useful
in this context as well. To the best of our knowledge,
we are the first to derive a backward formula for the polar
decomposition and show that it does not rely on spectral gaps.
In contrast, standard SVD differentiation formulas become
ill-conditioned when singular values approach each other.
Our approach yields a regularized differentiation scheme
with controlled gradients for ill-conditioned or nearly
rank-deficient matrices. We also propose a regularization
to stabilize the backward pass of the eigenvalue decomposition
(EVD). Combining these components provides a backward pass
through the full SVD. Finally, experiments on a practical
LLM compression task demonstrate improved optimization
outcomes and model quality.

Our main contributions are as follows:
\begin{itemize}
    \item We introduce a new algorithm for computing the SVD via polar decomposition, achieving up to a $2\times$ speedup by using matrix multiplications in bf16/tf32 precision. We show that polar decomposition, combined with recent advances~\cite{grishina2025accelerating}, leads to a practical, end-to-end SVD method with meaningful performance gains on modern hardware.
    
    \item We develop a stable regularized approach for differentiating the polar decomposition and the SVD. We formally analyze gradient stability under perturbations and provide theoretical guarantees on how regularization improves the stability of the backward pass and controls the gradient norm, while converging to the unperturbed gradient.
    
\end{itemize}

The paper is organized as follows. Section~\ref{section:related_work} reviews related work, existing approaches to the problem, and their limitations. Section~\ref{section:preliminaries} introduces basic definitions, the connection between the polar decomposition and the SVD, and briefly describes iterative methods underlying the proposed computational routines. Sections~\ref{section:forward} and~\ref{section:backward} present the SVD computation and differentiation methods, respectively, and report separate experiments for each component.

\section{Related Work}
\label{section:related_work}
Computing the SVD is a standard task in numerical linear algebra. GPU-based approaches typically include QR-based algorithms, Jacobi-type methods, and polar-decomposition-based algorithms.
QR-based methods~\cite{golub2013matrix} first reduce a matrix to bidiagonal form via Householder reflections or Givens rotations and then apply implicit QR iterations. They are robust, stable, and underlie LAPACK SVD routines~\cite{demmel1990accurate}, but bidiagonalization creates long dependency chains, limiting scalability on GPU~\cite{struski2024efficient}.
Jacobi-based methods~\cite{hestenes1958inversion,de1989one} iteratively apply plane rotations to column pairs to reduce off-diagonal entries of the Gram matrix. Since independent pairs can be processed in parallel, these methods can outperform QR-based algorithms on GPUs for small and medium matrices~\cite{nvidia_cusolver_pdf}.
Methods based on polar decomposition \cite{higham1994parallel,nakatsukasa2013stable} compute the SVD by first finding the unitary polar factor and then applying an EVD to the remaining factor. Existing approaches, however, typically compute the polar factor using QR-based iterations, which limits their GPU applicability~\cite{nakatsukasa2013stable}.

\begin{remark}
\label{remark:demmel}
A simpler approach to computing the SVD of a matrix $M \in \mathbb{R}^{m \times n}$ is to form the Gram matrix $M^\top M$ and compute its EVD, taking singular values as the square roots of the eigenvalues. However, this approach is numerically unstable: forming $M^\top M$ squares the condition number of $M$, so the relative error in the singular values scales as $\mathcal{O}(\sqrt{\varepsilon_{\text{mach}}})$. This makes Gram matrix-based methods unreliable for ill-conditioned or nearly rank-deficient matrices, especially in differentiable settings where gradient stability is essential. See~\cite[p. 241]{demmel1997applied} for more details and experimental validation in Appendix~\ref{appendix:additional_exps:gram}.
\end{remark}

Differentiating matrix decompositions appears to be a much less studied problem than computing them. Most of the issues comes from their non-uniqueness for repeated eigenvalues or low-rank cases.
This problem has been addressed in the literature with analytic formulas for the reverse mode differentiation of the SVD presented in~\cite{ionescu2015matrix,townsend2016differentiating,kanchi2025differentiable}. However, these expressions have well-known failure modes: gradients from these formulas explode for rank-deficient matrices or when singular values are close, which makes their application problematic in practical settings.
Several works have aimed to address these issues.
First, \cite{wang2021robust} proposed to address the small spectral gap issue via a Taylor approximation. This allows us to bound the gradient norm by choosing an appropriate number of terms and to prevent blow-up when singular values are close. However, that work has focused predominantly on symmetric matrices, and the proposed method can yield biased gradients due to the approximation error from the Taylor remainder.
Second, \cite{zhang2024differentiable} proposed to handle both rank-deficient and small gap cases using the Moore--Penrose pseudoinverse. For full-rank matrices with a large gap this method is equivalent to the analytic solution from~\cite{townsend2016differentiating}. However, we observe that their derivation contains an error (see Appendix~\ref{appendix:example:explanation}), and their final gradient formula is incorrect.

\section{Preliminaries}
\label{section:preliminaries}
Throughout this paper, we focus only on reverse-mode automatic differentiation~\cite{griewank2008evaluating,Baydin2018automatic}, and unless stated otherwise, the term \emph{differentiation} refers to computing the backward pass.

\subsection{Notation}
\label{preleminaries:notation}

For any vector $x \in \mathbb{R}^{n}$, $\|x\|_p = \bigl(\sum_{i=1}^n |x_i|^p\bigr)^{1/p} $ denotes the $\ell_p$ norm ($p \geq 1$).
For any matrix $M \in \mathbb{R}^{m \times n}$, $M_{\cdot,i}$ denotes the $i$-th column of $M$, $\sigma_i(M)$ denotes the $i$-th singular value of $M$, and $\operatorname{vec}(M)$ denotes the vectorization of $M$.
We denote by $M^{+}$ the Moore--Penrose pseudoinverse of $M$, defined via the SVD $M = U\Sigma V^\top$ as $M^{+} = V\Sigma^{+}U^\top,
\;\Sigma^{+} = \mathrm{diag}\bigl(\sigma_1^{-1},\dots,\sigma_r^{-1},0,\dots,0\bigr),$
where $r = \operatorname{rank}(M)$.
For any square invertible matrix $M$, let $\kappa_2(M) = \|M\|_2 \|M^{-1}\|_2$ be the condition number of $M$, where $\|\cdot\|_2$ is the spectral norm. 
For matrices $A,B \in \mathbb{R}^{m \times n}$, the Hadamard (elementwise) product is denoted by $A \odot B,
(A \odot B)_{ij} = A_{ij} B_{ij}$.
Given a scalar-valued loss function $\mathcal L$, we denote by $\overline X = \partial \mathcal L / \partial X$ the gradient of $\mathcal L$ with respect to a variable $X$.
Notice that $\overline X$ has the same shape as $X$ and arises in the differential expression with the increment $\mathrm dX$~\cite{bright2025matrix}:
$\mathrm d\mathcal L = \langle \overline X, \mathrm dX \rangle
= \operatorname{tr}(\overline X^\top \mathrm dX).$ For a square matrix $A \in \mathbb{R}^{n \times n}$, we denote its symmetric part by
$
A_{\operatorname{sym}} = \tfrac12\bigl(A + A^\top\bigr)$.
We denote by $A_{\text{off-diag}}$ the matrix $A$ with its diagonal set to $0$.

\subsection{Polar Decomposition}

\begin{definition}[Polar decomposition]
\label{def:polar_decomp}
    Any matrix $M \in \mathbb{R}^{m \times n}$ with $m \ge n$ admits a (right) polar
    decomposition
$M = W H$,
    where $W \in \mathbb{R}^{m \times n}$ has orthonormal columns, i.e.,
    $W^\top W = I_n$, and $H \in \mathbb{R}^{n \times n}$ is symmetric positive
    semidefinite.
\end{definition}
The polar decomposition is closely related to the SVD. Let $M = U \Sigma V^\top$ be the SVD of $M \in \mathbb{R}^{m \times n}, m \geq n$, where $U \in \mathbb{R}^{m \times n}$ and
$V \in \mathbb{R}^{n \times n}$ have orthonormal columns and
$\Sigma \in \mathbb{R}^{n \times n}$ is diagonal with singular values
$\sigma_1 \ge \cdots \ge \sigma_n \ge 0$.
Using the SVD, $M$ can be rewritten as
\begin{equation}
\label{preleminaries:eq:svd_to_polar}
\textstyle
M = \underbrace{U V^\top}_{W}
    \underbrace{V \Sigma V^\top}_{H},
\end{equation}
which is exactly the Definition~\ref{def:polar_decomp}.
The matrix $H$ is symmetric positive semidefinite with eigenvalues being equal to singular values of $M$.
Moreover, its
eigenvectors are the right singular vectors of $M$.

\section{SVD Computation}
\label{section:forward}
While the polar decomposition can be obtained directly from the SVD, our interest lies in the opposite direction: computing the SVD via the polar decomposition~\cite{higham1994parallel,nakatsukasa2013stable,higham2015faster}. 

\paragraph{Our algorithm.} We first compute the polar factor $W$
and then recover the remaining SVD components from the symmetric factor $H = W^\top M$ using an EVD:
$H = V\Sigma V^\top$.
The last step is obtained by setting $U = WV$, and finally
\[
M = (WV)\Sigma V^\top = U\Sigma V^\top.
\]
There exist several iterative methods for computing the polar factor (e.g., Newton-Schulz iteration~\cite{kovarik1970some,bjorck1971iterative}). Recent works have proposed modified polynomial iterations, such as CANS~\cite{grishina2025accelerating} and Polar-Express~\cite{amsel2025polar}, which improve the practical efficiency of such methods. We use the CANS iteration in our algorithm based on the results of an ablation study (see Appendix~\ref{appendix:additional_exps:polar_factor} for details). A detailed description of the algorithm is provided in Algorithm~\ref{alg:svd_forward}.

\begin{remark}
    Although Muon~\cite{jordan2024muon} is often described as using Newton--Schulz iterations, this is not entirely accurate. Muon uses a different polynomial iteration, which is typically run for a small fixed number of steps and does not converge precisely to the polar factor, since an accurate polar decomposition is not required for the optimizer. As our goal is to compute the polar factor accurately, we cannot directly use the Muon polynomial for orthogonalization.
\end{remark}

Numerical stability of such iterations has been discussed in~\cite{amsel2025polar,chen2014stable}. The stability of CANS follows a pattern similar to that of classical Newton-Schulz-type iterations~\cite{nakatsukasa2012backward}. In addition, the parameters of CANS control a trade-off between convergence speed and finite-precision error: faster convergence comes at the cost of lower accuracy. We analyze this trade-off for different arithmetic precisions and choose the parameters that give the best balance in our experiments (see Appendix~\ref{appendix:additional_exps:delta} for details).

\begin{figure}[ht]
    \centering
    \begin{subfigure}[t]{0.32\textwidth}
        \includegraphics[width=\linewidth]{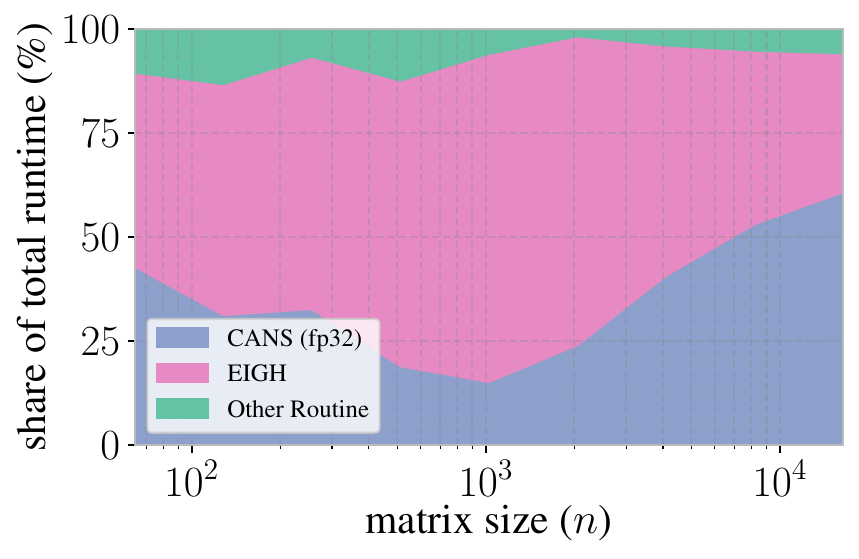}
        \caption{CANS SVD (fp32) algorithm}
        \label{fig:exp:forward:runtime:1}
    \end{subfigure}
    ~
    \begin{subfigure}[t]{0.32\textwidth}
        \includegraphics[width=\linewidth]{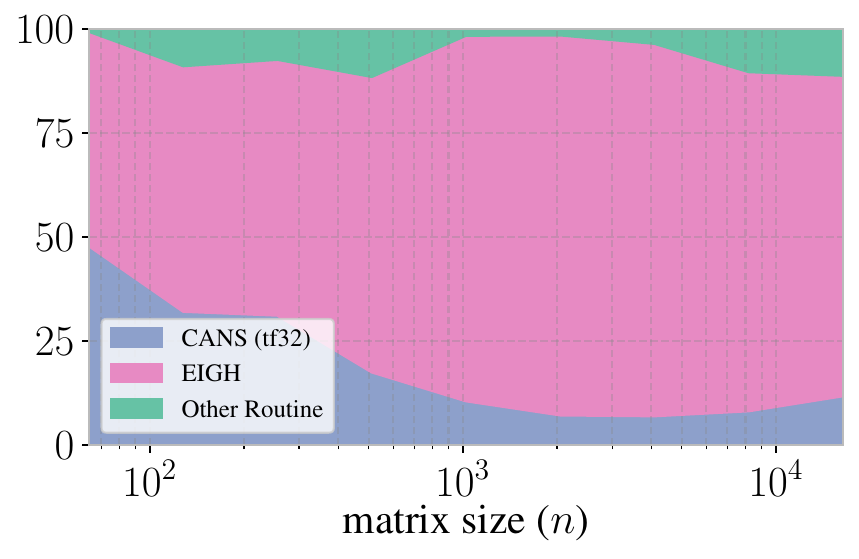}
        \caption{CANS SVD (tf32) algorithm}
        \label{fig:exp:forward:runtime:2}
    \end{subfigure}
    ~
    \begin{subfigure}[t]{0.32\textwidth}
        \includegraphics[width=\linewidth]{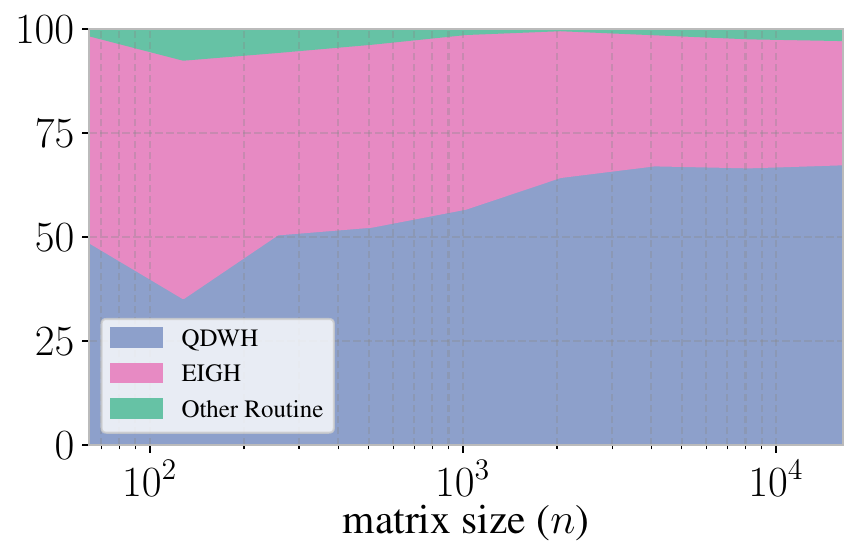}
        \caption{QDWH SVD algorithm}
        \label{fig:exp:forward:runtime:3}
    \end{subfigure}
    \caption{Comparison of different parts of the SVD computation by their share of the total runtime for a random matrix $M \in \mathbb{R}^{n \times n}$ with $\kappa_2(M) = 10$ on an NVIDIA B200 GPU. We report the shares of polar-factor computation (in blue), symmetric EVD (EIGH) (in pink), and other routines (in green) for different polar-decomposition-based SVD algorithms.}
    \label{fig:exp:forward:runtime}
\end{figure}

\begin{algorithm}[H]
\caption{CANS SVD. Steps marked in {\color{ourblue} blue color} can be executed in half precision}
\label{alg:svd_forward}
\begin{algorithmic}[1]
\REQUIRE
 $M \in \mathbb{R}^{m \times n}$ ($m \geq n$), tolerance $\varepsilon_{\texttt{QR}} > 0$
\ENSURE
$U, \Sigma, V^\top$ -- SVD of $M$
\STATE Normalize $M$ before NS iterations \COMMENT{$\rhd$ see Appendix~\ref{appendix:impl_details:ns_init}}
\STATE {\color{ourblue} Perform CANS preprocessing on $M$}
\STATE {\color{ourblue} Compute polar factor $W$ using CANS}
\STATE {\color{ourblue} $H \gets W^\top M$}
\STATE Compute EVD of $H = V \Sigma V^\top$
\STATE {\color{ourblue} $U \gets W V$}
\IF{$\exists$ $i$ such that $\bigl |\| U_{\cdot,i}\|_2 - 1 \bigr | > \varepsilon_{\texttt{QR}}$}
\STATE Orthogonalize $U$ using QR \COMMENT{$\rhd$ see Appendix~\ref{appendix:impl_details:rank_deficient}}
\ENDIF
\RETURN $U, \Sigma, V^\top$
\end{algorithmic}
\end{algorithm}

As an alternative to Newton--Schulz-type iterations, the QDWH iteration~\cite{nakatsukasa2010optimizing} has been widely used in practice in recent decades for computing the polar factor due to its cubic convergence rate. However, QDWH relies heavily on repeated QR factorizations, which scale poorly on GPUs (see Appendix~\ref{appendix:additional_exps:scaling}). As a result, in our experiments, the QDWH computation accounts for more than half of the total SVD runtime on modern GPUs (see Figure~\ref{fig:exp:forward:runtime:3}). Additionally, CANS uses only matrix multiplications, which allows us to exploit efficient bf16 and tf32 matrix multiplication routines, substantially accelerating the SVD and shifting the bottleneck to the EVD step (see Figure~\ref{fig:exp:forward:runtime:2}). This makes CANS more suitable than QDWH for GPU oriented pipelines.

\subsection{Experiments}

In this section, we present an empirical evaluation of the proposed algorithm for computing the SVD. We compare the following methods:
\begin{itemize}
    \item \textbf{CANS SVD (fp32 and tf32\footnote{We additionally experiment with the bf16 version, but it provides only negligible speedup over tf32, see Appendix~\ref{appendix:additional_exps:tf32_vs_bf16}.} versions).} Algorithm~\ref{alg:svd_forward} computed in fp32 precision and in mixed precision (fp32 and tf32, wherever possible).
    \item \textbf{QDWH SVD.} Our implementation of SVD via polar decomposition using the QDWH iteration proposed in~\cite{nakatsukasa2013stable}.
    \item \textbf{CUDA POLAR.} cuSOLVER implementation of SVD via polar decomposition using the QDWH iteration (\textit{driver=gesvdp}).
    \item \textbf{CUDA QR.} cuSOLVER implementation of QR-based SVD (\textit{driver=gesvd}).
    \item \textbf{CUDA JACOBI.} cuSOLVER implementation of Jacobi-based SVD (\textit{driver=gesvdj}).
\end{itemize}

Importantly, CUDA POLAR and CUDA QR do not provide any hyperparameters according to the cuSOLVER documentation\footnote{\url{https://docs.nvidia.com/cuda/cusolver/index.html}}. CUDA JACOBI provides hyperparameters controlling accuracy
when called from CUDA (see Appendix~\ref{appendix:additional_exps:jacobi} for details on choosing hyperparameters). Notably, JAX~\cite{jax2018github} and PyTorch~\cite{paszke2019pytorch} do not provide API access to set these parameters. The hyperparameters chosen for CANS SVD are provided in Appendix~\ref{appendix:impl_details:repro}.

\begin{figure}[ht]
    \begin{subfigure}[t]{0.43\textwidth}
    \centering
        \includegraphics[width=\linewidth]{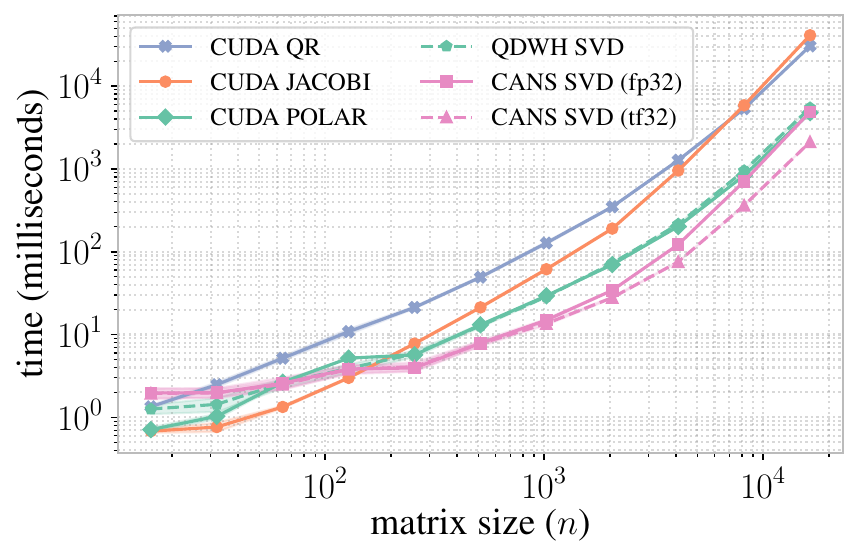}
        \caption{Square matrices $M$ with $\kappa_2(M) = 10$.}
        \label{fig:exp:forward:square}
    \end{subfigure}
    \hfill
    \begin{subfigure}[t]{0.43\textwidth}
    \centering
        \includegraphics[width=\linewidth]{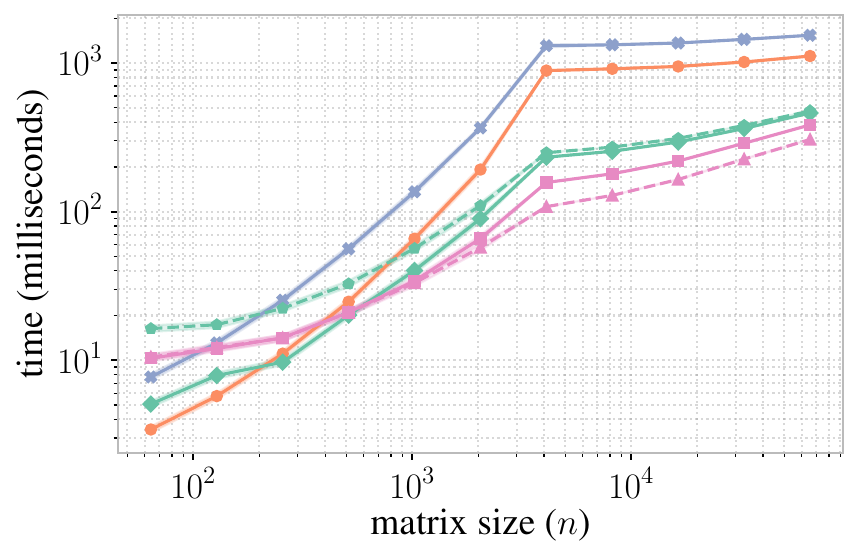}
        \caption{Rectangular matrices $M \in \mathbb{R}^{4096 \times n}$ with $\sigma_1(M) / \sigma_{\min(4096, n)}(M) = 10$.}
        \label{fig:exp:forward:rectangle}
    \end{subfigure}
    \vspace{-0.5em}
    \caption{Comparison of SVD algorithms on random matrices. The total wall-clock time is averaged over $100$ trials and reported with the standard deviation. \textbf{Note:} The algorithms have different accuracy levels, see Tables~\ref{tab:forward:recon_error} and~\ref{tab:forward:ortho_error} for a more comprehensive analysis. The experiments are conducted on an NVIDIA B200 GPU.}
    \label{fig:exp:forward}
\end{figure}

\paragraph{Square matrices.} We compare the accuracy of the algorithms for various condition numbers using fixed-size random square matrices in Tables~\ref{tab:forward:recon_error} and~\ref{tab:forward:ortho_error}. In Figure~\ref{fig:exp:forward:square}, we measure the performance of all algorithms on random square matrices $M \in \mathbb{R}^{n \times n}$ with $\kappa_2(M)=10$, as the size $n$ increases. The algorithms can be split into two groups by their accuracy: (i) less accurate algorithms (reconstruction error $\sim$ $10^{-3}$) that are fast - CANS SVD in mixed precision and CUDA JACOBI, and (ii) more accurate algorithms (reconstruction error $\sim$ $10^{-6}$) that are slow - CANS SVD in single precision, CUDA POLAR, and CUDA QR.

For the first group, CANS SVD (tf32 version) is preferable for medium-sized and large matrices, providing up to a $2 \times$ speedup compared to CUDA POLAR and up to a $10 \times$ speedup compared to CUDA JACOBI and CUDA QR. For small matrices, however, CUDA JACOBI is faster.

For the second group, CANS SVD in single precision is preferable for medium-sized and large matrices. For medium-sized matrices, it is up to $2 \times$ faster than the other baselines, while for $n = 2^{14}$, QDWH SVD and CUDA POLAR start to match the performance of CANS SVD. 
For small matrices, CUDA POLAR is faster than the other algorithms. However, CUDA POLAR failed to compute the SVD for matrices with condition numbers greater than $10^3$, while our re-implementation of this algorithm (QDWH SVD) performs well in this setting.

We note that our algorithms are implemented in JAX for benchmarking, so writing custom kernels may substantially speed them up and increase the gap with CUDA baselines for small matrices. Moreover, we expect that, with new generations of NVIDIA GPUs, CANS SVD can become even faster due to the trends observed in Appendix~\ref{appendix:additional_exps:scaling}.

\begin{table}[ht]
    \centering
    \small
    \caption{Comparison of the accuracy of SVD algorithms on a random square matrix $M \in \mathbb{R}^{4096 \times 4096}$ for different condition numbers. The reported metric is the relative reconstruction error $\|M - U\Sigma V^\top\|_F / \|M\|_F$, with standard deviation. Results are averaged over $100$ sampled matrices. \colorbox{red!10}{Red} indicates cases with significantly large errors.}
    \begin{tabular}{l|ccccc}
        \toprule
        Method & $10$ & $10^{2}$ & $10^{3}$ & $10^{4}$ & $10^{6}$ \\
        \midrule
        CANS SVD (fp32) & $4.9_{\color{gray}\scriptscriptstyle\pm .0} \cdot 10^{-6}$ & $4.2_{\color{gray}\scriptscriptstyle\pm .0} \cdot 10^{-6}$ & $4.3_{\color{gray}\scriptscriptstyle\pm .1} \cdot 10^{-6}$ & $4.4_{\color{gray}\scriptscriptstyle\pm .0} \cdot 10^{-6}$ & $5.3_{\color{gray}\scriptscriptstyle\pm .1} \cdot 10^{-6}$ \\
        CANS SVD (tf32) & $9.7_{\color{gray}\scriptscriptstyle\pm .0} \cdot 10^{-4}$ & $8.2_{\color{gray}\scriptscriptstyle\pm .0} \cdot 10^{-4}$ & $8.5_{\color{gray}\scriptscriptstyle\pm .0} \cdot 10^{-4}$ & $8.7_{\color{gray}\scriptscriptstyle\pm .0} \cdot 10^{-4}$ & $1.2_{\color{gray}\scriptscriptstyle\pm .0} \cdot 10^{-3}$ \\
        QDWH SVD & $3.4_{\color{gray}\scriptscriptstyle\pm .0} \cdot 10^{-6}$ & $3.3_{\color{gray}\scriptscriptstyle\pm .0} \cdot 10^{-6}$ & $3.7_{\color{gray}\scriptscriptstyle\pm .0} \cdot 10^{-6}$ & $5.7_{\color{gray}\scriptscriptstyle\pm .0} \cdot 10^{-6}$ & $6.2_{\color{gray}\scriptscriptstyle\pm .0} \cdot 10^{-6}$ \\
        CUDA POLAR & $4.3_{\color{gray}\scriptscriptstyle\pm .0} \cdot 10^{-6}$ & $5.5_{\color{gray}\scriptscriptstyle\pm .0} \cdot 10^{-6}$ & \colorbox{red!10}{$7.6_{\color{gray}\scriptscriptstyle\pm .0} \cdot 10^{1}$} & \colorbox{red!10}{$7.6_{\color{gray}\scriptscriptstyle\pm .0} \cdot 10^{1}$} & \colorbox{red!10}{$7.6_{\color{gray}\scriptscriptstyle\pm .0} \cdot 10^{1}$} \\
        CUDA QR & $1.8_{\color{gray}\scriptscriptstyle\pm .1} \cdot 10^{-5}$ & $2.7_{\color{gray}\scriptscriptstyle\pm .1} \cdot 10^{-5}$ & $1.5_{\color{gray}\scriptscriptstyle\pm .0} \cdot 10^{-5}$ & $1.6_{\color{gray}\scriptscriptstyle\pm .1} \cdot 10^{-5}$ & $1.1_{\color{gray}\scriptscriptstyle\pm .0} \cdot 10^{-5}$ \\
        CUDA JACOBI & $9.7_{\color{gray}\scriptscriptstyle\pm .4} \cdot 10^{-4}$ & $8.4_{\color{gray}\scriptscriptstyle\pm .4} \cdot 10^{-4}$ & $6.8_{\color{gray}\scriptscriptstyle\pm .8} \cdot 10^{-4}$ & $6.1_{\color{gray}\scriptscriptstyle\pm .6} \cdot 10^{-4}$ & $5.9_{\color{gray}\scriptscriptstyle\pm .7} \cdot 10^{-4}$ \\
        \bottomrule
    \end{tabular}
    \label{tab:forward:recon_error}
\end{table}

\begin{table}[ht]
\vspace{-0.5em}
    \centering
    \small
    \caption{Comparison of the accuracy of SVD algorithms on a random square matrix $M \in \mathbb{R}^{4096 \times 4096}$ for different condition numbers. The reported metric is the relative orthogonality error $\max\{\|U^\top U - I\|_F / \|I\|_F, \|V^\top V - I\|_F / \|I\|_F\}$, with standard deviation. Results are averaged over $100$ sampled matrices.}
    \begin{tabular}{l|ccccc}
        \toprule
        Method & $10$ & $10^{2}$ & $10^{3}$ & $10^{4}$ & $10^{6}$ \\
        \midrule
        CANS SVD (fp32) & $3.7_{\color{gray}\scriptscriptstyle\pm .0} \cdot 10^{-6}$ & $3.6_{\color{gray}\scriptscriptstyle\pm .0} \cdot 10^{-6}$ & $3.2_{\color{gray}\scriptscriptstyle\pm .9} \cdot 10^{-6}$ & $3.3_{\color{gray}\scriptscriptstyle\pm .0} \cdot 10^{-6}$ & $2.4_{\color{gray}\scriptscriptstyle\pm .0} \cdot 10^{-6}$ \\
        CANS SVD (tf32) & $6.6_{\color{gray}\scriptscriptstyle\pm .0} \cdot 10^{-4}$ & $6.2_{\color{gray}\scriptscriptstyle\pm .0} \cdot 10^{-4}$ & $2.8_{\color{gray}\scriptscriptstyle\pm .0} \cdot 10^{-6}$ & $6.2_{\color{gray}\scriptscriptstyle\pm .0} \cdot 10^{-4}$ & $6.2_{\color{gray}\scriptscriptstyle\pm .0} \cdot 10^{-4}$ \\
        QDWH SVD & $3.6_{\color{gray}\scriptscriptstyle\pm .0} \cdot 10^{-6}$ & $3.5_{\color{gray}\scriptscriptstyle\pm .0} \cdot 10^{-6}$ & $3.4_{\color{gray}\scriptscriptstyle\pm .0} \cdot 10^{-6}$ & $3.2_{\color{gray}\scriptscriptstyle\pm .0} \cdot 10^{-6}$ & $3.0_{\color{gray}\scriptscriptstyle\pm .0} \cdot 10^{-6}$ \\
        CUDA POLAR & $4.1_{\color{gray}\scriptscriptstyle\pm .0} \cdot 10^{-6}$ & $4.0_{\color{gray}\scriptscriptstyle\pm .0} \cdot 10^{-6}$ & $8.9_{\color{gray}\scriptscriptstyle\pm .0} \cdot 10^{-6}$ & $8.1_{\color{gray}\scriptscriptstyle\pm .0} \cdot 10^{-6}$ & $7.0_{\color{gray}\scriptscriptstyle\pm .0} \cdot 10^{-6}$ \\
        CUDA QR & $9.6_{\color{gray}\scriptscriptstyle\pm .2} \cdot 10^{-6}$ & $1.4_{\color{gray}\scriptscriptstyle\pm .0} \cdot 10^{-5}$ & $1.2_{\color{gray}\scriptscriptstyle\pm .0} \cdot 10^{-5}$ & $1.4_{\color{gray}\scriptscriptstyle\pm .1} \cdot 10^{-5}$ & $9.7_{\color{gray}\scriptscriptstyle\pm .4} \cdot 10^{-6}$ \\
        CUDA JACOBI & $2.1_{\color{gray}\scriptscriptstyle\pm .0} \cdot 10^{-3}$ & $2.1_{\color{gray}\scriptscriptstyle\pm .0} \cdot 10^{-3}$ & $2.2_{\color{gray}\scriptscriptstyle\pm .1} \cdot 10^{-3}$ & $2.3_{\color{gray}\scriptscriptstyle\pm .1} \cdot 10^{-3}$ & $2.6_{\color{gray}\scriptscriptstyle\pm .1} \cdot 10^{-3}$ \\
        \bottomrule
    \end{tabular}
    \label{tab:forward:ortho_error}
\end{table}

\paragraph{Rectangular matrices.} Lastly, we compare different algorithms for computing the SVD of rectangular matrices. We fix the first dimension of the matrix to $4096$ and vary the second dimension. For a more comprehensive analysis, we apply the QR preprocessing trick to all algorithms: we first compute the thin QR decomposition of the matrix and then compute the SVD of the triangular factor:
\begin{equation*}
M = Q {\color{ourblue} R } = Q {\color{ourblue} U \Sigma V^\top} = (Q U) \Sigma V^\top.
\end{equation*}Figure~\ref{fig:exp:forward:rectangle} presents the results. The differences from the square case are minor. For small matrices, CUDA JACOBI and CUDA POLAR provide the best performance-accuracy trade-offs with the optimal choice depending on the required accuracy. However, for medium and large matrices, CANS SVD consistently achieves higher performance.

\section{Stable Differentiation of Matrix Decompositions}
\label{section:backward}
In this work, we address two challenges in differentiating matrix
factorizations. First, at repeated eigenvalues or singular values,
the derivatives of the EVD and SVD factors are generally undefined,
and gradients may grow without bound as such points are approached.
We introduce a pseudo-gradient that specifies a backward rule
at repeated values and agrees with the classical backward
at nondegenerate points. Second, finite-precision arithmetic
introduces errors that can be strongly amplified by ill-conditioned
backward formulas, particularly near rank deficiency or small
spectral gaps. We regularize the potentially singular terms
in these formulas to control their magnitude and damp
the amplification of numerical errors.

\subsection{Differentiating the Polar Decomposition}

Although forward-mode differentiation (JVP) of the polar decomposition and generalized matrix functions has been studied in~\cite{gawlik2016computing,noferini2017generalizedfunction}, our focus lies in a different direction: reverse-mode differentiation (VJP) of decompositions. To the best of our knowledge, this is the first result to derive backward formulas for the polar decomposition. 

\begin{theorem}
\label{thm:polar_backward}
Let $M\in\mathbb{R}^{m\times n}$ ($m\ge n$) have full-rank.
Let $\mathcal L(M)=\ell(W,H)$ be a scalar loss and let $\overline{W}$ and $\overline H$ be the respective gradients.
Set $C:=(\overline H- W^\top \overline{W}\,H^{-1})_{\mathrm{sym}}$.
Define the mask matrix
\[
T_{ij}:=
\begin{cases}
\frac{1}{\sigma_i+\sigma_j}, & \sigma_i+\sigma_j>0,\\[2pt]
0, & \sigma_i+\sigma_j=0,
\end{cases}
\qquad 1\le i,j\le n,
\]
and $X:=V\Bigl(T\odot(V^\top C V)\Bigr)V^\top$.
Then 
$
\displaystyle
{
\ \overline M
\;=\;
\overline{W}\,H^{-1} \;+\; 2MX.
}
$
\end{theorem}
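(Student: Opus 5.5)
The plan is to compute the differential $\mathrm d\mathcal L$ in terms of $\mathrm dM$ and then read off $\overline M$ from $\mathrm d\mathcal L=\langle \overline M,\mathrm dM\rangle$. Since $M$ has full rank, all $\sigma_i>0$. Then $H=V\Sigma V^\top$ is symmetric positive definite and invertible, $T_{ij}=1/(\sigma_i+\sigma_j)$ for all $i,j$, and the polar factors $W=MH^{-1}$, $H=(M^\top M)^{1/2}$ are smooth functions of $M$ near the given point. I would use two defining relations, $M=WH$ and $H^2=M^\top M$. The second is valid because $W^\top W=I_n$.

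\textbf{Differential of $H$.} Differentiating $H^2=M^\top M$ gives the Sylvester equation
\[
H\,\mathrm dH+\mathrm dH\,H = S,\qquad S:=\mathrm dM^\top M+M^\top \mathrm dM,
\]
with $S$ symmetric. Conjugating by $V$ turns it into $\Sigma\, \widetilde{\mathrm dH}+\widetilde{\mathrm dH}\,\Sigma=V^\top S V$ with $\widetilde{\mathrm dH}=V^\top \mathrm dH\,V$. Its entrywise solution is
\[
\mathrm dH=V\bigl(T\odot (V^\top S V)\bigr)V^\top .
\]
This solution is unique because $\sigma_i+\sigma_j>0$, which is exactly where full rank enters. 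It is symmetric because $T$ and $S$ are symmetric.

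\textbf{Differential of $W$.} Differentiating $M=WH$ gives $\mathrm dW=(\mathrm dM-W\,\mathrm dH)H^{-1}$. Substituting,
\[
\mathrm d\mathcal L=\langle \overline W,\mathrm dW\rangle+\langle\overline H,\mathrm dH\rangle
=\langle \overline W H^{-1},\mathrm dM\rangle+\langle \overline H-W^\top\overline W H^{-1},\mathrm dH\rangle .
\]
Here I use $H^{-\top}=H^{-1}$ to move $H^{-1}$ and $W$ across the trace. Because $\mathrm dH$ is symmetric, the second pairing only sees the symmetric part of its left factor, so it equals $\langle C,\mathrm dH\rangle$.

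\textbf{Pulling back through the Sylvester solution.} This is the key step: find the adjoint of the map $S\mapsto \mathrm dH$. By orthogonal invariance of the Frobenius inner product and symmetry of $T$,
\[
\langle C,\mathrm dH\rangle=\langle V^\top C V,\;T\odot(V^\top S V)\rangle=\langle T\odot(V^\top CV),\;V^\top S V\rangle=\langle X,S\rangle .
\]
Finally, since $X$ is symmetric,
\[
\langle X,\mathrm dM^\top M\rangle=\operatorname{tr}(X\,\mathrm dM^\top M)=\operatorname{tr}(MX\,\mathrm dM^\top)=\langle MX,\mathrm dM\rangle,
\]
and likewise $\langle X,M^\top\mathrm dM\rangle=\langle MX,\mathrm dM\rangle$. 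Hence $\langle X,S\rangle=2\langle MX,\mathrm dM\rangle$, and collecting terms gives $\overline M=\overline W H^{-1}+2MX$.

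\textbf{Main obstacle.} I expect the only delicate points to be (i) justifying differentiability and the unique solvability of the Sylvester equation, both of which rely on full rank, and (ii) handling the symmetric part correctly. For (ii), the restriction of $\mathrm dH$ to symmetric matrices is what allows replacing the left factor by $C$ and makes $X$ symmetric, which is needed in the last step. The mask convention $T_{ij}=0$ when $\sigma_i+\sigma_j=0$ is never triggered under the theorem's hypothesis; it only matters for the later pseudo-gradient extension.
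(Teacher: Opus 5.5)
Your proof is correct and follows essentially the same route as the paper: you differentiate $M=WH$ and $H^2=M^\top M$, rewrite $\mathrm d\mathcal L$ as $\langle \overline W H^{-1},\mathrm dM\rangle+\langle C,\mathrm dH\rangle$, and move the Lyapunov solve onto the other side of the pairing to obtain $2\langle MX,\mathrm dM\rangle$. The only cosmetic difference is in that last step: the paper states it abstractly as the identity $\langle A,X\rangle=\langle Y,B\rangle$ for $HY+YH=A$, $HX+XH=B$, and then cites the eigenbasis solution, whereas you verify the same self-adjointness directly in the eigenbasis using the symmetry of $T$; you also justify differentiability and unique solvability, which the paper leaves implicit.
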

\paragraph{Discussion.} The proof is provided in Appendix~\ref{appendix:polar_formuls}. Differentiating the polar decomposition through the SVD introduces
inverse-gap terms whose analytical cancellation can be lost
in finite precision. This instability is discussed for the matrix
square root in~\cite{song2022fast}.
Our explicit backward formula avoids these terms: vanishing
singular value gaps introduce no additional ill-conditioning,
provided the smallest singular value stays bounded away from zero.

Theorem~\ref{thm:polar_backward} provides an exact backward formula for a
full-rank matrix $M$. However, this formula becomes increasingly ill-conditioned
as the smallest singular value of $M$ approaches zero. In finite-precision
arithmetic, such small singular values may be severely distorted by numerical
and accumulated errors even when both $M$ and its approximation remain full rank.
Thus the central question is:

\begin{center}
    How can we reliably evaluate the backward formula when $M$ is full rank but
    numerically close to rank deficiency?
\end{center}

In applications, we are interested in the gradient of a loss function with
respect to an exact matrix $M$, while in practice we only have access to a
perturbed matrix $\widetilde M$. Even when $\|M-\widetilde M\|$ is small, the
smallest singular values of the two matrices, and consequently the corresponding
gradients, may differ drastically.

We show that an appropriate regularization prevents the amplification of these
errors and provides a stable approximation of the desired gradient that converges
to the exact one as the perturbation vanishes.
\begin{proposition}
\label{prop:regularized_polar}
Let $M,\widetilde M\in\mathbb{R}^{m\times n}$, $m\geq n$, be full-rank matrices such that
$\|M-\widetilde M\|_F\leq\varepsilon$,
with polar decompositions $M=WH$ and $\widetilde M=\widetilde W\widetilde H$, where
$\widetilde H=\widetilde V\widetilde\Sigma\widetilde V^\top$,
$\widetilde\Sigma=\operatorname{diag}(\widetilde\sigma_1,\ldots,\widetilde\sigma_n)$.
For incoming gradients $\overline W,\overline H$, define
\[
\widetilde H_\varepsilon^{-1}
=
\widetilde V\widetilde\Sigma
(\widetilde\Sigma^2+\varepsilon^{1/2}I)^{-1}
\widetilde V^\top,
\qquad
\widetilde B_\varepsilon
=
\left(
\overline H-
\widetilde H_\varepsilon^{-1}
\widetilde M^\top\overline W
\widetilde H_\varepsilon^{-1}
\right)_{\mathrm{sym}},
\]
and
\[
\widetilde X_\varepsilon
=
\widetilde V
\left[
\frac{\widetilde\sigma_i+\widetilde\sigma_j}
     {(\widetilde\sigma_i+\widetilde\sigma_j)^2+\varepsilon^{1/4}}
(\widetilde V^\top\widetilde B_\varepsilon\widetilde V)_{ij}
\right]_{i,j=1}^n
\widetilde V^\top.
\]
Then the regularized gradient
$
\overline{\widetilde M}_\varepsilon
=
\overline W\widetilde H_\varepsilon^{-1}
+
2\widetilde M\widetilde X_\varepsilon$
satisfies, for all sufficiently small $\varepsilon>0$:
$
\|\overline M-\overline{\widetilde M}_\varepsilon\|_F
\leq c\,\varepsilon^{1/4}$,
where $c=c(M,\overline W,\overline H)$ is independent of $\varepsilon$.
\end{proposition}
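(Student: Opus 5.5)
We compare the regularized formula at $\widetilde M$ with the exact formula of Theorem~\ref{thm:polar_backward} at $M$ by passing through an intermediate quantity: the \emph{unregularized} formula evaluated at $\widetilde M$. Since $M$ has full rank, $\sigma_n(M)=:\sigma_{\min}>0$. By Weyl's inequality, $|\widetilde\sigma_i-\sigma_i|\le\varepsilon$, so for $\varepsilon\le\sigma_{\min}/2$ all $\widetilde\sigma_i\ge\sigma_{\min}/2$. Hence every quantity below is uniformly bounded in terms of $M,\overline W,\overline H$. This is the meaning of "sufficiently small $\varepsilon$".

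First we rewrite Theorem~\ref{thm:polar_backward} in the form used in the proposition. Because $W^\top=H^{-1}M^\top$, we have $W^\top\overline W H^{-1}=H^{-1}M^\top\overline W H^{-1}$. The mask entry $1/(\sigma_i+\sigma_j)$ is the $\varepsilon=0$ value of $(\sigma_i+\sigma_j)/((\sigma_i+\sigma_j)^2+\varepsilon^{1/4})$. So $\overline M=\Phi(M,0)$, where $\Phi(\cdot,\delta)$ denotes the map $\widetilde M\mapsto\overline W\widetilde H_\delta^{-1}+2\widetilde M\widetilde X_\delta$ with both regularizations switched off when $\delta=0$. We then split
\[
\|\overline M-\overline{\widetilde M}_\varepsilon\|_F\le\|\Phi(M,0)-\Phi(\widetilde M,0)\|_F+\|\Phi(\widetilde M,0)-\Phi(\widetilde M,\varepsilon)\|_F .
\]

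\textbf{Regularization error (second term).} We bound spectrally and entrywise in the $\widetilde V$ basis, so eigenvector differences never appear. We have
\[
\widetilde H^{-1}-\widetilde H_\varepsilon^{-1}=\widetilde V\,\mathrm{diag}\!\left(\frac{\varepsilon^{1/2}}{\widetilde\sigma_i(\widetilde\sigma_i^2+\varepsilon^{1/2})}\right)\widetilde V^\top,
\]
whose norm is at most $8\varepsilon^{1/2}/\sigma_{\min}^3$. Therefore $\|\widetilde B-\widetilde B_\varepsilon\|_F=O(\varepsilon^{1/2})$, using boundedness of $\widetilde M$, $\overline W$ and $\widetilde H^{-1}$. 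For the mask, with $s=\widetilde\sigma_i+\widetilde\sigma_j\ge\sigma_{\min}$,
\[
\left|\frac1s-\frac{s}{s^2+\varepsilon^{1/4}}\right|=\frac{\varepsilon^{1/4}}{s(s^2+\varepsilon^{1/4})}\le\frac{\varepsilon^{1/4}}{\sigma_{\min}^3}.
\]
Combining these via the Hadamard bound $\|A\odot B\|_F\le\max|A_{ij}|\,\|B\|_F$ gives $\|\widetilde X-\widetilde X_\varepsilon\|_F=O(\varepsilon^{1/4})$. This is where the rate $\varepsilon^{1/4}$ comes from.

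\textbf{Perturbation error (first term).} Because $\sigma_{\min}>0$, the map $M\mapsto\Phi(M,0)$ is smooth, in fact real-analytic, on the open set of full-rank matrices. Indeed, $H=(M^\top M)^{1/2}$, $W=MH^{-1}$, and $X$ is the unique solution of the Sylvester equation $HX+XH=C$. The last claim holds because $V^\top(HX+XH)V=(\sigma_i+\sigma_j)(V^\top XV)_{ij}$, so $X=V(T\odot V^\top CV)V^\top$ is just the Sylvester solution.

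This representation is the crucial step. The formula involves $V$, which is not continuous in $M$ when singular values cluster, so term-by-term estimates fail. Writing $X$ as a Sylvester solution eliminates $V$ entirely. With the Sylvester form, the first term is handled by Lipschitz bounds:
\begin{itemize}
\item $\|\widetilde H-H\|_F\le\sqrt2\,\varepsilon$, the known Lipschitz estimate for the polar factor, or directly from the square-root Sylvester equation.
\item $\|\widetilde H^{-1}-H^{-1}\|=O(\varepsilon)$, by the resolvent identity.
\item $\|\widetilde C-C\|=O(\varepsilon)$, which follows from the two bounds above.
\item For the Sylvester solutions, subtracting the two equations gives $\widetilde H(\widetilde X-X)+(\widetilde X-X)\widetilde H=\widetilde C-C-(\widetilde H-H)X-X(\widetilde H-H)$. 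The inverse of $Y\mapsto\widetilde HY+Y\widetilde H$ has norm at most $1/\sigma_{\min}$, so $\|\widetilde X-X\|=O(\varepsilon)$.
\end{itemize}
Hence the first term is $O(\varepsilon)\le O(\varepsilon^{1/4})$ for $\varepsilon\le1$, and the two bounds together give $c\,\varepsilon^{1/4}$ with $c$ depending only on $\sigma_{\min},\|M\|,\|\overline W\|,\|\overline H\|$.
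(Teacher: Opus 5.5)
Your argument is correct, but it follows a different route from the paper. The paper never separates perturbation error from regularization error. Instead it applies the cited Tikhonov stability result (Proposition~\ref{prop:regularization}: data error $\delta$ with $\alpha=\sqrt{\delta}$ gives solution error $O(\sqrt{\delta})$) twice, in a cascade:
\begin{itemize}
\item First, columnwise to $HZ=I$, with $\|H-\widetilde H\|_F=O(\varepsilon)$ and $\alpha=\varepsilon^{1/2}$. This gives $\|\widetilde H_\varepsilon^{-1}-H^{-1}\|_F=O(\varepsilon^{1/2})$, and hence $\|\widetilde B_\varepsilon-B\|_F=O(\varepsilon^{1/2})$.
\item Then to the vectorized Lyapunov system with Kronecker sum $I\otimes H+H\otimes I$. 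Its effective data error is now $O(\varepsilon^{1/2})$, so $\alpha=\varepsilon^{1/4}$ yields $O(\varepsilon^{1/4})$.
\end{itemize}

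That route is short. It explains the exponents $1/2$ and $1/4$ as the textbook parameter choice $\alpha\sim\sqrt{\text{noise}}$, which is the natural language for the nearly singular systems that motivate the construction. However, it rests on an external black box and hides the dependence on $\sigma_{\min}(M)$ inside $c(A,b)$. It also obtains $\|H-\widetilde H\|_F=O(\varepsilon)$ by appealing to smoothness plus Lemma~\ref{lemm:module_error}, which is stated only for symmetric matrices.

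Your split has two parts: an $O(\varepsilon)$ Lipschitz term, handled through the Sylvester characterization so that eigenvector discontinuity never enters, and an explicit bias term computed in the $\widetilde V$ basis. This is elementary and self-contained, and it gives explicit constants and the explicit threshold $\varepsilon\le\sigma_{\min}/2$. It also exposes what the cascade obscures. Under the proposition's hypotheses, the entire $\varepsilon^{1/4}$ is bias from the regularized mask, so the rate is attained (for example at $\widetilde M=M$) whenever $B\neq 0$. Meanwhile, the unregularized formula at $\widetilde M$ would already be $O(\varepsilon)$-accurate. The proposition therefore quantifies the price of the regularization. Its benefit, such as the uniform bound $\|\widetilde H_\varepsilon^{-1}\|_2\le\tfrac12\varepsilon^{-1/4}$ independent of $\widetilde\sigma_{\min}$, lies outside what the proposition measures.
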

The proof is provided in Appendix~\ref{appendix:regularized_polar}.
This result suggests that the regularized gradient
$\overline{\widetilde M}_\varepsilon$ at the perturbed point converges to the exact gradient for the unperturbed point $\overline M$
as $\varepsilon \to 0$.
The final algorithm is given in Algorithm~\ref{alg:polar_backward}.

\begin{algorithm}[H]
\caption{Polar Decomposition Backward}
\label{alg:polar_backward}
\begin{algorithmic}[1]
\REQUIRE
 $M \in \mathbb{R}^{m \times n}$ with polar decomposition $M = W H$, gradients
$\overline{W},\overline{H}$, and $\varepsilon > 0$
\ENSURE
Gradient $\overline{M}$
\vspace{0.4em}
\STATE EVD of $H = V \Sigma V^\top$
\vspace{0.2em}
\STATE $H_{\varepsilon}^{-1} \gets V \left( \Sigma (\Sigma^2 +\varepsilon^{1/2} I)^{-1} \right) V^\top$
\vspace{0.2em}
\STATE
$
B \gets (\overline{H} - H^{-1}_\varepsilon M^\top \overline{W} H^{-1}_{\varepsilon})_{\operatorname{sym}}
$ \label{alg:normalize}
\vspace{0.2em}
\STATE
$
\widetilde{X}_{ij} \gets \frac{(\sigma_i + \sigma_j)}{(\sigma_i + \sigma_j)^2 + \varepsilon^{1/4}} \cdot (V^\top B V)_{ij}
$
\\
\vspace{0.2em}
\STATE $X \gets V \widetilde{X} V^\top$
\vspace{0.2em}
\STATE
$
\overline{M} \gets \overline{W} H^{-1}_{\varepsilon} + 2 M X
$
\vspace{0.2em}
\RETURN $\overline{M}$
\end{algorithmic}
\end{algorithm}

\subsection{Differentiating the SVD}
\label{subsect:polar_svd}

Recall the connection between the polar decomposition and the SVD~\eqref{preleminaries:eq:svd_to_polar}. 
Therefore, to differentiate the SVD it only remains to differentiate the EVD of $H$.

Although differentiation through the EVD and SVD is classical
and well studied~\cite{giles2008extended}
(see also Appendix~\ref{appendix:eigenvalue_dev}),
the derivatives of the decomposition factors are generally
undefined at repeated eigenvalues or singular values.
Moreover, gradients can become arbitrarily large as spectral
gaps vanish, causing severe numerical instability~\cite{wang2021robust}.
In this work, we introduce a pseudo-gradient that provides
a backward rule at such degenerate points and agrees with
the classical backward wherever the decomposition factors
are differentiable.

\begin{definition}[EVD pseudo-gradient]
\label{def:minimum-norm-evd-gradient}
\itshape
Let $M \in \mathbb{R}^{n \times n}$ be symmetric positive definite,
and fix an eigendecomposition $M = V\Sigma V^\top$.
Let $\overline V$ and the diagonal matrix $\overline\Sigma$
denote the incoming gradients of a differentiable scalar loss
$\mathcal{L}(M) = \ell(V,\Sigma)$.

The EVD pseudo-gradient at this factorization,
denoted by $\overline M_\star$, is the solution of
\[
    \min_{G = G^\top} \|G\|_F^2
\qquad
\text{subject to}
\qquad
    \langle G,dM\rangle_F
    =
    \langle \overline V,\underbrace{V\Omega}_{dV}\rangle_F
    +
    \langle \overline\Sigma,\underbrace{D}_{d\Sigma}\rangle_F
\]
for every diagonal \(D\) and skew-symmetric \(\Omega\) with \(\Omega_{ij}=0\) if \(\sigma_i=\sigma_j\).
\end{definition}

The Definition~\ref{def:minimum-norm-evd-gradient} selects the symmetric matrix of minimum Frobenius
norm among those reproducing the loss differential along the
specified factor perturbations. The restriction on $\Omega$
excludes rotations within repeated eigenvalue eigenspaces:
these rotations commute with $\Sigma$ and therefore leave $M$
unchanged, although they may change $\ell(V,\Sigma)$.
Their contribution to the loss differential is disregarded
by definition, rather than assumed to vanish. Existence and uniqueness are established in Appendix~\ref{appendix:eigenvalue_dev}.

\begin{theorem}
\label{thm:eigh_backward_blocks}
Let $M\in\mathbb{R}^{n\times n}$ be symmetric positive definite,
and fix an EVD: $M=V\Sigma V^\top$,
    $\Sigma=\operatorname{diag}(\sigma_1,\ldots,\sigma_n)$.
Let $\overline V$ and the diagonal matrix $\overline\Sigma$
be the incoming gradients of a differentiable scalar loss
$\mathcal{L}(M) = \ell(V,\Sigma)$. Define
\[
    K_{ij}=
    \begin{cases}
        \dfrac{1}{\sigma_i-\sigma_j}, & \sigma_i\neq\sigma_j,\\[6pt]
        0, & \sigma_i=\sigma_j,
    \end{cases}
    \qquad
    S=\frac12 K^\top\odot
    \bigl(V^\top\overline V-\overline V^\top V\bigr).
\]
Then the EVD pseudo-gradient from
Definition~\ref{def:minimum-norm-evd-gradient} is given by
$\overline M_\star=V(S+\overline\Sigma)V^\top$.
\end{theorem}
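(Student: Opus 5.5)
Within the constrained class of perturbations, $dM = d(V\Sigma V^\top) = V\bigl(D + \Omega\Sigma - \Sigma\Omega\bigr)V^\top$ for diagonal $D$ and skew-symmetric $\Omega$ with $\Omega_{ij}=0$ whenever $\sigma_i=\sigma_j$. We therefore work in the rotated coordinates $\widehat G := V^\top G V$; the Frobenius norm and symmetry are invariant under this change. The constraint in Definition~\ref{def:minimum-norm-evd-gradient} becomes
\[
\langle \widehat G,\, D + \Omega\Sigma-\Sigma\Omega\rangle_F
= \langle V^\top\overline V,\, \Omega\rangle_F + \langle \overline\Sigma, D\rangle_F
\]
for all admissible $D,\Omega$. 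The entries of $\Omega\Sigma-\Sigma\Omega$ are $(\sigma_j-\sigma_i)\Omega_{ij}$. Hence the set $\mathcal S$ of matrices $V^\top dM\,V$ is exactly the space of symmetric matrices $Y$ with $Y_{ij}=0$ whenever $i\neq j$ and $\sigma_i=\sigma_j$. The diagonal is free, and the off-diagonal entries are free across distinct eigenvalues.

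The first step is to check that the right-hand side is a well-defined linear functional of $Y=V^\top dM\,V\in\mathcal S$. This holds because the map $(D,\Omega)\mapsto Y$ is a bijection onto $\mathcal S$: set $D=\operatorname{diag}(Y)$ and $\Omega_{ij}=Y_{ij}/(\sigma_j-\sigma_i)$ for $\sigma_i\neq\sigma_j$. This inverse map is where $K$ enters. The problem then reduces to finding the minimum-norm symmetric $\widehat G$ with $\langle \widehat G, Y\rangle = \phi(Y)$ for all $Y\in\mathcal S$.

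A symmetric $\widehat G$ satisfies this iff its orthogonal projection onto $\mathcal S$, within symmetric matrices, equals the Riesz representer $R\in\mathcal S$ of $\phi$. Any component orthogonal to $\mathcal S$, namely the off-diagonal entries inside equal-eigenvalue blocks, only adds norm. The minimizer is therefore $\widehat G=R$, which also gives existence and uniqueness.

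It remains to compute $R$ explicitly. The main bookkeeping obstacle is the skew/symmetric pairing and the factor $\tfrac12$. The diagonal part gives $R_{ii}=\overline\Sigma_{ii}$, provided $\overline\Sigma$ is taken diagonal. For the off-diagonal part, write $A:=V^\top\overline V$. Since $\Omega$ is skew, $\langle A,\Omega\rangle=\langle \tfrac12(A-A^\top),\Omega\rangle$. Substituting $\Omega_{ij}=Y_{ij}/(\sigma_j-\sigma_i)=K_{ji}Y_{ij}$ gives
\[
\sum_{i\neq j}\tfrac12(A-A^\top)_{ij}K_{ji}Y_{ij}=\langle S,Y\rangle ,
\]
with $S=\tfrac12K^\top\odot(A-A^\top)$.

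We then verify that $S$ is symmetric. Both $K^\top$ and $A-A^\top$ are antisymmetric, so their Hadamard product is symmetric. We also verify that $S$ lies in $\mathcal S$: it has zero diagonal, and it vanishes on equal-eigenvalue pairs because $K$ does. Therefore $R=S+\overline\Sigma$, and rotating back gives $\overline M_\star=V(S+\overline\Sigma)V^\top$.
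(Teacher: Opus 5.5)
Your proof is correct and follows essentially the same route as the paper. Both pass to $V^\top G V$, reduce the constraint to $H_{ii}=\overline\Sigma_{ii}$ and $(\sigma_j-\sigma_i)H_{ij}=\tfrac12\bigl(V^\top\overline V-\overline V^\top V\bigr)_{ij}$ for $\sigma_i\neq\sigma_j$, and obtain the minimizer by zeroing the unconstrained within-block off-diagonal entries; your Riesz-representer/projection language is a more abstract packaging of the paper's entrywise coefficient matching that also makes existence and uniqueness explicit.
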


Proof is provided in Appendix~\ref{appendix:eigenvalue_dev}. Similarly to the polar decomposition, in practical computations we operate in
finite-precision arithmetic and therefore cannot reliably determine whether
eigenvalues are exactly equal. Consequently, the notion of equality between
eigenvalues must be interpreted relative to machine precision.

By analogy with Proposition~\ref{prop:regularized_polar}, the same
principle can be applied to the differentiation of the EVD. In particular, the
only potentially singular object in the backward formula is the matrix
$K$, which encodes the solution of the commutator (Lyapunov-type)
equation
$
\Sigma S - S \Sigma = -B,
$ 
where $B$ is defined in Algorithm~\ref{alg:polar_backward}. As in the polar case, this equation is singular whenever eigenvalues coincide,
and its solution is not unique.

We introduce a smooth $\varepsilon$-regularization of $K$ by setting
$(K_\varepsilon)_{ij}
= \frac{\sigma_i-\sigma_j}{(\sigma_i-\sigma_j)^2+\varepsilon^2}$,
where $\varepsilon>0$.
This corresponds to Tikhonov regularization of the associated
linear system. For each $\varepsilon>0$, the resulting backward
formula depends smoothly on the factors and incoming gradients.
For a fixed eigendecomposition and fixed incoming gradients,
it converges to the pseudo-gradient from
Theorem~\ref{thm:eigh_backward_blocks} as $\varepsilon\to0$. The full pipeline is presented in Appendix~\ref{appendix:pipeline}.

\begin{table}[t]
    \centering
    \caption{Comparison of the gradient norm of SVD backward pass algorithms on a random square matrix $M \in \mathbb{R}^{4096 \times 4096}$ for different spectral gaps. The reported metric is the Frobenius norm of the gradient $\| \overline{M} \|_F$. Results are averaged over $100$ sampled matrices.}
    \resizebox{\textwidth}{!}{%
    \begin{tabular}{l|ccccccc}
        \toprule
        Method \textbackslash \ Spec.gap & $1$ & $10^{-1}$ & $10^{-2}$ & $10^{-3}$ & $10^{-4}$ & $10^{-5}$ & $10^{-6}$ \\
        \midrule
        Analytical~\cite{townsend2016differentiating} &
        $66.0_{\color{gray}\scriptscriptstyle\pm .6}$ &
        $67.0_{\color{gray}\scriptscriptstyle\pm 1.0}$ &
        $100.0_{\color{gray}\scriptscriptstyle\pm 41.0}$ &
        $690.0_{\color{gray}\scriptscriptstyle\pm 580.0}$ &
        $5500.0_{\color{gray}\scriptscriptstyle\pm 4600.0}$ &
        NaN &
        NaN \\
        Taylor~\cite{wang2021robust} &
        $49.0_{\color{gray}\scriptscriptstyle\pm .3}$ &
        $49.0_{\color{gray}\scriptscriptstyle\pm .3}$ &
        $49.0_{\color{gray}\scriptscriptstyle\pm .3}$ &
        $49.0_{\color{gray}\scriptscriptstyle\pm .3}$ &
        $49.0_{\color{gray}\scriptscriptstyle\pm .3}$ &
        $49.0_{\color{gray}\scriptscriptstyle\pm .3}$ &
        $49.0_{\color{gray}\scriptscriptstyle\pm .3}$ \\
        Tikhonov (ours) &
        $66.0_{\color{gray}\scriptscriptstyle\pm .6}$ &
        $67.0_{\color{gray}\scriptscriptstyle\pm 1.0}$ &
        $100.0_{\color{gray}\scriptscriptstyle\pm 41.0}$ &
        $690.0_{\color{gray}\scriptscriptstyle\pm 580.0}$ &
        $5500.0_{\color{gray}\scriptscriptstyle\pm 4600.0}$ &
        $66.0_{\color{gray}\scriptscriptstyle\pm .6}$ &
        $66.0_{\color{gray}\scriptscriptstyle\pm .6}$ \\
        \bottomrule
    \end{tabular}}
    \label{tab:backward:norm}
\end{table}

\subsection{Experiments}

In this section, we present an empirical evaluation of the proposed algorithm for computing the backward-pass SVD.

\paragraph{Synthetic Experiments.}  First, we validate the correctness of the proposed algorithm. We compare the following methods:
\begin{itemize}
    \item \textbf{Analytical formula~\cite{townsend2016differentiating}}. The standard closed-form SVD backward expression.
    \item \textbf{Taylor approximation~\cite{wang2021robust}}. A Taylor series approximation-based method.
    \item \textbf{Our method}. We implement differentiation of the SVD directly from our formulas for the polar decomposition and the EVD. The arising linear equations are solved using Tikhonov regularization\footnote{Although we do not provide a dedicated theoretical analysis, the Tikhonov regularization in the backward pass can be replaced by a $\delta$-regularized Moore--Penrose pseudoinverse~\cite{golub2013matrix}, yielding an alternative well-defined and numerically stable gradient in degenerate or ill-conditioned cases.}; see Algorithm~\ref{alg:polar_backward} (we set $\varepsilon$ to $10^{-8}$).
\end{itemize}

We compare the gradient norms of the methods for various spectral gaps. Gradients are computed for the following loss function:
$
    \mathcal{L}(U, \Sigma, V) = 
    \langle A, U \rangle +
    \langle B, \Sigma \rangle +
    \langle C, V \rangle$.
We sample $A, B, C$ with i.i.d. entries from the standard normal distribution.
Table~\ref{tab:backward:norm} presents the results of differentiating the loss function. Gradients in the analytical formula explode when the spectral gap is less than $10^{-4}$ and produce NaNs. Both of our methods, as well as the Taylor approximation, are stable when applied to matrices with close singular values. However, the Taylor approximation is inaccurate: its norm does not match the analytical formula~\cite{townsend2016differentiating} even for large spectral gaps. Both of our methods provide gradients with the same norm as the standard closed-form expression in the case of large spectral gaps. Additionally, we validate our implementation with finite-difference checks in the code.

\paragraph{LLM Compression.} Here, we demonstrate how our formulas can be applied to large language model compression. We follow the Dobi-SVD pipeline~\cite{qinsi2025dobisvd}. Instead of compressing all weight matrices with the same ratio, as in classical approaches~\cite{parkina2025coala}, we learn layer-specific ranks by differentiating through the SVD of activations. More precisely, let $\{W_i, X_i\}_{i = 1}^n$ denote the weight matrices of linear layers and its inputs respectively. Each layer has a trainable parameter $\gamma_i \in \mathbb{R}_+$, interpreted as a continuous relaxation of its rank. At each training step, for an input batch $X_i$, we truncate the activations $W_i X_i$ using the SVD and minimize the loss with a regularizer that promotes the desired compression ratio.

We use the LLaMA-2-7B model~\cite{touvron2023llama} with 256 random training samples and 256 validation samples from the WikiText-2 dataset~\cite{merity2016pointer}. For a clearer comparison of SVD pipelines, we do not perform quantization or weight-update steps, unlike Dobi-SVD~\cite{qinsi2025dobisvd}. We replace the Taylor-based differentiation used in Dobi-SVD with our backward pass. To ablate the effect of regularization, we test several values of $\varepsilon$.

\begin{table}
\small
  \caption{Metric values of various compression methods. Experiments were conducted using the \emph{LLaMA-2-7B} model on the WikiText-2 dataset.}
    \centering
    \begin{tabular}{c l c c c}
      \toprule
      \textbf{Ratio} & \textbf{Method} & \textbf{PPL} & \textbf{Val Loss} & \textbf{sec. / iter.} \\
      \midrule
      \textcolor{gray}{100\%} & \textcolor{gray}{LLaMA-2-7B}
      & $\textcolor{gray}{{5.53}}$
      & $\textcolor{gray}{{6.25}}$
      & $\textcolor{gray}{{-}}$ \\
      \midrule

      & Dobi-SVD & 6.95 & \underline{7.17} & 75.09 \\
      80\% & Our ($\varepsilon = 0.1$) & \underline{6.90} & 7.19 & 55.62 \\
      \cellcolor{mypink!10}
      & \cellcolor{mypink!10}Our ($\varepsilon = 1.0$)
      & \cellcolor{mypink!10}\bfseries 6.87
      & \cellcolor{mypink!10}\bfseries 7.11
      & \cellcolor{mypink!10}55.62 \\
      & Our ($\varepsilon = 10$) & 6.93 & \underline{7.17} & 55.62 \\
      \midrule

      & Dobi-SVD & 7.47 & 7.83 & 75.09 \\
      \cellcolor{mypink!10}60\%
      & \cellcolor{mypink!10}Our ($\varepsilon = 0.1$)
      & \cellcolor{mypink!10}\bfseries 7.33
      & \cellcolor{mypink!10}\bfseries 7.62
      & \cellcolor{mypink!10}55.62 \\
      & Our ($\varepsilon = 1.0$) & \underline{7.34} & \underline{7.63} & 55.62 \\
      & Our ($\varepsilon = 10$) & \underline{7.34} & 7.72 & 55.62 \\
      \midrule

      & Dobi-SVD & 13.34 & 14.72 & 75.09 \\
      40\% & Our ($\varepsilon = 0.1$) & \underline{12.54} & \underline{13.56} & 55.62 \\
      \cellcolor{mypink!10}
      & \cellcolor{mypink!10}Our ($\varepsilon = 1.0$)
      & \cellcolor{mypink!10}\bfseries 11.40
      & \cellcolor{mypink!10}\bfseries 12.46
      & \cellcolor{mypink!10}55.62 \\
      & Our ($\varepsilon = 10$) & 13.25 & 14.70 & 55.62 \\
      \bottomrule
    \end{tabular}%
    \label{tab:llama}
\end{table}

Table~\ref{tab:llama} reports the results. We compare perplexity (PPL), validation loss, and seconds per iteration. At the compression ratio of $80\%$, all methods remain close, but our method with $\varepsilon=1.0$ achieves the best PPL and validation loss. At $60\%$, the best two variants are $\varepsilon=0.1$ and $\varepsilon=1.0$, both improving over Dobi-SVD, which suggests that the proposed backward pass remains useful beyond the low-compression regime. The advantage becomes most pronounced at the aggressive $40\%$ compression ratio: our method with $\varepsilon=1.0$ reduces PPL from $13.34$ to $11.40$ and validation loss from $14.72$ to $12.46$. This indicates that stable gradients are especially important when the rank-selection problem becomes harder. The ablation also shows that the regularization strength matters. The value $\varepsilon=1.0$ gives the best overall trade-off, whereas stronger regularization degrades quality. Finally, our pipeline yields about a $1.35\times$ speedup.

\section{Conclusion}
\label{section:conclusion}
In conclusion, we proposed an efficient GPU-friendly SVD computation via the polar decomposition, utilizing an iterative scheme for the polar factor that is dominated by matrix multiplications. This structure allows for additional speedups by performing computations in mixed precision while retaining practical accuracy.

For the backward pass, we propose to differentiate the SVD through the stable differentiation of the polar decomposition and the symmetric eigenvalue decomposition.
Finally, we provide a perturbation-based perspective that makes the backward computation robust for perturbed inputs and nearly degenerate spectra (including singular values close to zero), and show how regularization allows us to control this behavior.

\bibliography{iclr2027_conference}

@article{grishina2025accelerating,
  title={Accelerating Newton-Schulz Iteration for Orthogonalization via Chebyshev-type Polynomials},
  author={Grishina, Ekaterina and Smirnov, Matvey and Rakhuba, Maxim},
  journal={arXiv preprint arXiv:2506.10935},
  year={2025}
}

@inproceedings{
amsel2025polar,
title={The Polar Express: Optimal Matrix Sign Methods and their Application to the Muon Algorithm},
author={Noah Amsel and David Persson and Christopher Musco and Robert M. Gower},
booktitle={The Fourteenth International Conference on Learning Representations},
year={2026},
url={https://openreview.net/forum?id=yRtgZ1K8hO}
}

@article{nakatsukasa2010optimizing,
  title={Optimizing Halley's iteration for computing the matrix polar decomposition},
  author={Nakatsukasa, Yuji and Bai, Zhaojun and Gygi, Fran{\c{c}}ois},
  journal={SIAM Journal on Matrix Analysis and Applications},
  volume={31},
  number={5},
  pages={2700--2720},
  year={2010},
  publisher={SIAM}
}

@misc{jordan2024muon,
  author       = {Keller Jordan and Yuchen Jin and Vlado Boza and Jiacheng You and
                  Franz Cesista and Laker Newhouse and Jeremy Bernstein},
  title        = {Muon: An optimizer for hidden layers in neural networks},
  year         = {2024},
  url          = {https://kellerjordan.github.io/posts/muon/}
}

@misc{higham2015faster,
  author       = {Nick Higham},
  title        = {Faster SVD via Polar Decomposition},
  year         = {2015},
  url          = {https://nhigham.com/2015/11/24/faster-svd-via-polar-decomposition/}
}

@article{bjorck1971iterative,
  title={An iterative algorithm for computing the best estimate of an orthogonal matrix},
  author={Bj{\"o}rck, {\AA}ke and Bowie, Clazett},
  journal={SIAM Journal on Numerical Analysis},
  volume={8},
  number={2},
  pages={358--364},
  year={1971},
  publisher={SIAM}
}

@article{kovarik1970some,
  title={Some iterative methods for improving orthonormality},
  author={Kovarik, Zdislav},
  journal={SIAM Journal on Numerical Analysis},
  volume={7},
  number={3},
  pages={386--389},
  year={1970},
  publisher={SIAM}
}

@article{nakatsukasa2013stable,
  title={Stable and efficient spectral divide and conquer algorithms for the symmetric eigenvalue decomposition and the SVD},
  author={Nakatsukasa, Yuji and Higham, Nicholas J},
  journal={SIAM Journal on Scientific Computing},
  volume={35},
  number={3},
  pages={A1325--A1349},
  year={2013},
  publisher={SIAM}
}

@article{struski2024efficient,
  title={Efficient GPU implementation of randomized SVD and its applications},
  author={Struski, {\L}ukasz and Morkisz, Pawe{\l} and Spurek, Przemys{\l}aw and Bernabeu, Samuel Rodriguez and Trzci{\'n}ski, Tomasz},
  journal={Expert Systems with Applications},
  volume={248},
  pages={123462},
  year={2024},
  publisher={Elsevier}
}

@article{demmel1990accurate,
  title={Accurate singular values of bidiagonal matrices},
  author={Demmel, James and Kahan, William},
  journal={SIAM Journal on Scientific and Statistical Computing},
  volume={11},
  number={5},
  pages={873--912},
  year={1990},
  publisher={SIAM}
}

@book{golub2013matrix,
  title={Matrix computations},
  author={Golub, Gene H and Van Loan, Charles F},
  year={2013},
  publisher={JHU press}
}

@article{wang2020deep,
  title={Deep CNNs meet global covariance pooling: Better representation and generalization},
  author={Wang, Qilong and Xie, Jiangtao and Zuo, Wangmeng and Zhang, Lei and Li, Peihua},
  journal={IEEE transactions on pattern analysis and machine intelligence},
  volume={43},
  number={8},
  pages={2582--2597},
  year={2020},
  publisher={IEEE}
}

@article{oseledets2011tensor,
  title={Tensor-train decomposition},
  author={Oseledets, Ivan V},
  journal={SIAM Journal on Scientific Computing},
  volume={33},
  number={5},
  pages={2295--2317},
  year={2011},
  publisher={SIAM}
}

@inproceedings{wang2025svdllm,
  title={{SVD}-{LLM}: Truncation-aware Singular Value Decomposition for Large Language Model Compression},
  author={Xin Wang and Yu Zheng and Zhongwei Wan and Mi Zhang},
  booktitle={The Thirteenth International Conference on Learning Representations},
  year={2025}
}

@article{meng2024pissa,
  title={Pissa: Principal singular values and singular vectors adaptation of large language models},
  author={Meng, Fanxu and Wang, Zhaohui and Zhang, Muhan},
  journal={Advances in Neural Information Processing Systems},
  volume={37},
  pages={121038--121072},
  year={2024}
}

@article{abdi2010principal,
  title={Principal component analysis},
  author={Abdi, Herv{\'e} and Williams, Lynne J},
  journal={Wiley interdisciplinary reviews: computational statistics},
  volume={2},
  number={4},
  pages={433--459},
  year={2010},
  publisher={Wiley Online Library}
}

@inproceedings{cremonesi2010performance,
  title={Performance of recommender algorithms on top-n recommendation tasks},
  author={Cremonesi, Paolo and Koren, Yehuda and Turrin, Roberto},
  booktitle={Proceedings of the fourth ACM conference on Recommender systems},
  pages={39--46},
  year={2010}
}

@article{rodpysh2023employing,
  title={Employing singular value decomposition and similarity criteria for alleviating cold start and sparse data in context-aware recommender systems},
  author={Rodpysh, Keyvan Vahidy and Mirabedini, Seyed Javad and Banirostam, Touraj},
  journal={Electronic Commerce Research},
  volume={23},
  number={2},
  pages={681--707},
  year={2023},
  publisher={Springer}
}

@article{yuan2019singular,
  title={Singular value decomposition based recommendation using imputed data},
  author={Yuan, Xiaofeng and Han, Lixin and Qian, Subin and Xu, Guoxia and Yan, Hong},
  journal={Knowledge-Based Systems},
  volume={163},
  pages={485--494},
  year={2019},
  publisher={Elsevier}
}

@inproceedings{shi2024domain,
  title={Domain generalization via nuclear norm regularization},
  author={Shi, Zhenmei and Ming, Yifei and Fan, Ying and Sala, Frederic and Liang, Yingyu},
  booktitle={Conference on Parsimony and Learning},
  pages={179--201},
  year={2024},
  organization={PMLR}
}

@book{anderson1999lapack,
  title={LAPACK users' guide},
  author={Anderson, Edward and Bai, Zhaojun and Bischof, Christian and Blackford, L Susan and Demmel, James and Dongarra, Jack and Du Croz, Jeremy and Greenbaum, Anne and Hammarling, Sven and McKenney, Alan and others},
  year={1999},
  publisher={SIAM}
}

@article{paszke2019pytorch,
  title={PyTorch: An imperative style, high-performance deep learning library},
  author={Paszke, Adam and Gross, Sam and Massa, Francisco and Lerer, Adam and Bradbury, James and Chanan, Gregory and Killeen, Trevor and Lin, Zeming and Gimelshein, Natalia and Antiga, Luca and others},
  journal={Advances in neural information processing systems},
  volume={32},
  year={2019}
}

@manual{nvidia_cusolver_pdf,
  title={{cuSOLVER Library Reference Guide (PDF)}},
  author={NVIDIA},
  organization={NVIDIA},
  year={2025},
  url={https://docs.nvidia.com/cuda/pdf/CUSOLVER_Library.pdf},
}

@inproceedings{song2022fast,
  title={Fast Differentiable Matrix Square Root},
  author={Yue Song and Nicu Sebe and Wei Wang},
  booktitle={International Conference on Learning Representations},
  year={2022}
}

@InProceedings{giles2008extended,
author="Giles, Mike B.",
title="Collected Matrix Derivative Results for Forward and Reverse Mode Algorithmic Differentiation",
booktitle="Advances in Automatic Differentiation",
year="2008",
publisher="Springer Berlin Heidelberg",
pages="35--44"
}

@techreport{townsend2016differentiating,
  title={Differentiating the singular value decomposition},
  author={Townsend, James},
  year={2016},
  institution={Technical Report 2016, https://j-towns.github.io/papers/svd-derivative.pdf}
}

@article{parkina2025coala,
  title={COALA: Numerically Stable and Efficient Framework for Context-Aware Low-Rank Approximation},
  author={Parkina, Uliana and Rakhuba, Maxim},
  journal={Advances in Neural Information Processing Systems},
  volume={39},
  year={2025}
}

@article{higham1994parallel,
  title={A parallel algorithm for computing the polar decomposition},
  author={Higham, Nicholas J and Papadimitriou, Pythagoras},
  journal={Parallel computing},
  volume={20},
  number={8},
  pages={1161--1173},
  year={1994},
  publisher={Elsevier}
}

@article{hestenes1958inversion,
  title={Inversion of matrices by biorthogonalization and related results},
  author={Hestenes, Magnus R},
  journal={Journal of the Society for Industrial and Applied Mathematics},
  volume={6},
  number={1},
  pages={51--90},
  year={1958},
  publisher={SIAM}
}

@article{de1989one,
  title={A one-sided Jacobi algorithm for computing the singular value decomposition on a vector computer},
  author={de Rijk, P\_P M\_},
  journal={SIAM journal on scientific and statistical computing},
  volume={10},
  number={2},
  pages={359--371},
  year={1989},
  publisher={SIAM}
}

@book{demmel1997applied,
  title={Applied numerical linear algebra},
  author={Demmel, James W},
  year={1997},
  publisher={SIAM}
}

@article{bright2025matrix,
  title={Matrix calculus for machine learning and beyond},
  author={Edelman, Alan and Johnson, Steven G},
  journal={Massachusetts Institute of Technology: MIT OpenCourseWare},
  year={2022}
}

@book{Tyrtyshnikov2025MatrixAnalysis,
  author    = {Tyrtyshnikov, Eugene E.},
  title     = {Matrichnyi analiz i osnovy algebry [Matrix Analysis and Fundamentals of Algebra]},
  year      = {2025},
  publisher = {MCNMO},
  note = {[in Russian]},
  language  = {russian}
}

@article{zhang2024differentiable,
  title={Differentiable SVD based on Moore-Penrose Pseudoinverse for Inverse Imaging Problems},
  author={Zhang, Yinghao and Hu, Yue},
  journal={arXiv preprint arXiv:2411.14141},
  year={2024}
}

@article{chen2014stable,
  title={A stable scaling of Newton-Schulz for improving the sign function computation of a Hermitian matrix},
  author={Chen, Jie and Chow, Edmond},
  journal={Preprint ANL/MCS-P5059-0114 (https://www.mcs.anl.gov/papers/P5059-0114.pdf)},
  year={2014}
}

@inproceedings{ionescu2015matrix,
  title={Matrix backpropagation for deep networks with structured layers},
  author={Ionescu, Catalin and Vantzos, Orestis and Sminchisescu, Cristian},
  booktitle={Proceedings of the IEEE international conference on computer vision},
  pages={2965--2973},
  year={2015}
}

@article{wang2021robust,
  title={Robust differentiable SVD},
  author={Wang, Wei and Dang, Zheng and Hu, Yinlin and Fua, Pascal and Salzmann, Mathieu},
  journal={IEEE transactions on pattern analysis and machine intelligence},
  volume={44},
  number={9},
  pages={5472--5487},
  year={2021},
  publisher={IEEE}
}

@article{kanchi2025differentiable,
  title={Differentiable singular value decomposition (SVD)},
  author={Kanchi, Rohit Sunil and He, Sicheng},
  journal={Mechanical Systems and Signal Processing},
  volume={237},
  pages={112817},
  year={2025},
  publisher={Elsevier}
}

@book{bhatia2013matrix,
  title={Matrix analysis},
  author={Bhatia, Rajendra},
  volume={169},
  year={2013},
  publisher={Springer Science \& Business Media}
}

@article{gawlik2016computing,
  title={Iterative computation of the Fr{\'e}chet derivative of the polar decomposition},
  author={Gawlik, Evan S and Leok, Melvin},
  journal={SIAM Journal on Matrix Analysis and Applications},
  volume={38},
  number={4},
  pages={1354--1379},
  year={2017},
  publisher={SIAM}
}

@article{noferini2017generalizedfunction,
author = {Noferini, Vanni},
title = {A Formula for the Fréchet Derivative of a Generalized Matrix Function},
journal = {SIAM Journal on Matrix Analysis and Applications},
volume = {38},
number = {2},
pages = {434-457},
year = {2017},
doi = {10.1137/16M1072851},
URL = {https://doi.org/10.1137/16M1072851},
eprint = {https://doi.org/10.1137/16M1072851}
}

@article{nakatsukasa2012backward,
  title={Backward stability of iterations for computing the polar decomposition},
  author={Nakatsukasa, Yuji and Higham, Nicholas J},
  journal={SIAM Journal on Matrix Analysis and Applications},
  volume={33},
  number={2},
  pages={460--479},
  year={2012},
  publisher={SIAM}
}

@article{vannieuwenhoven2012truncation,
author = {Vannieuwenhoven, Nick and Vandebril, Raf and Meerbergen, Karl},
title = {A New Truncation Strategy for the Higher-Order Singular Value Decomposition},
journal = {SIAM Journal on Scientific Computing},
volume = {34},
number = {2},
pages = {A1027-A1052},
year = {2012},
doi = {10.1137/110836067},
URL = {https://doi.org/10.1137/110836067},
eprint = {https://doi.org/10.1137/110836067}
}

@article{wang2018atomo,
  title={Atomo: Communication-efficient learning via atomic sparsification},
  author={Wang, Hongyi and Sievert, Scott and Liu, Shengchao and Charles, Zachary and Papailiopoulos, Dimitris and Wright, Stephen},
  journal={Advances in neural information processing systems},
  volume={31},
  year={2018}
}

@article{jolliffe2016principal,
    author = {Jolliffe, Ian T. and Cadima, Jorge},
    title = {Principal component analysis: a review and recent developments},
    journal = {Philosophical Transactions of the Royal Society A: Mathematical, Physical and Engineering Sciences},
    volume = {374},
    number = {2065},
    pages = {20150202},
    year = {2016},
    month = {04},
    issn = {1364-503X},
    doi = {10.1098/rsta.2015.0202},
    url = {https://doi.org/10.1098/rsta.2015.0202},
    eprint = {https://royalsocietypublishing.org/rsta/article-pdf/doi/10.1098/rsta.2015.0202/1381479/rsta.2015.0202.pdf},
}

@inproceedings{wang2017global,
  author={Wang, Qilong and Li, Peihua and Zhang, Lei},
  booktitle={2017 IEEE Conference on Computer Vision and Pattern Recognition (CVPR)}, 
  title={G2DeNet: Global Gaussian Distribution Embedding Network and Its Application to Visual Recognition}, 
  year={2017},
  volume={},
  number={},
  pages={6507-6516},
  doi={10.1109/CVPR.2017.689}
}

@inproceedings{
qinsi2025dobisvd,
title={Dobi-{SVD}: Differentiable {SVD} for {LLM} Compression and Some New Perspectives},
author={Wang Qinsi and Jinghan Ke and Masayoshi Tomizuka and Kurt Keutzer and Chenfeng Xu},
booktitle={The Thirteenth International Conference on Learning Representations},
year={2025},
url={https://openreview.net/forum?id=kws76i5XB8}
}

@software{jax2018github,
  author = {James Bradbury and Roy Frostig and Peter Hawkins and Matthew James Johnson and Yash Katariya and Chris Leary and Dougal Maclaurin and George Necula and Adam Paszke and Jake Vander{P}las and Skye Wanderman-{M}ilne and Qiao Zhang},
  title = {{JAX}: composable transformations of {P}ython+{N}um{P}y programs},
  url = {http://github.com/jax-ml/jax},
  version = {0.3.13},
  year = {2018},
}

@book{hansen2010discrete,
  title={Discrete inverse problems: insight and algorithms},
  author={Hansen, Per Christian},
  year={2010},
  publisher={SIAM}
}

@article{touvron2023llama,
  title={Llama 2: Open foundation and fine-tuned chat models},
  author={Touvron, Hugo and Martin, Louis and Stone, Kevin and Albert, Peter and Almahairi, Amjad and Babaei, Yasmine and Bashlykov, Nikolay and Batra, Soumya and Bhargava, Prajjwal and Bhosale, Shruti and others},
  journal={arXiv preprint arXiv:2307.09288},
  year={2023}
}

@article{merity2016pointer,
  title={Pointer sentinel mixture models},
  author={Merity, Stephen and Xiong, Caiming and Bradbury, James and Socher, Richard},
  journal={arXiv preprint arXiv:1609.07843},
  year={2016}
}

@article{Baydin2018automatic,
  author  = {Atilim Gunes Baydin and Barak A. Pearlmutter and Alexey Andreyevich Radul and Jeffrey Mark Siskind},
  title   = {Automatic Differentiation in Machine Learning: a Survey},
  journal = {Journal of Machine Learning Research},
  year    = {2018},
  volume  = {18},
  number  = {153},
  pages   = {1--43},
  url     = {http://jmlr.org/papers/v18/17-468.html}
}

@book{griewank2008evaluating,
  title={Evaluating derivatives: principles and techniques of algorithmic differentiation},
  author={Griewank, Andreas and Walther, Andrea},
  year={2008},
  publisher={SIAM}
}
\bibliographystyle{iclr2027_conference}

\appendix
\section{Implementation Details}
\label{appendix:impl_details}

\subsection{Newton-Schulz Initialization Normalization}
\label{appendix:impl_details:ns_init}

An important detail in implementing the Newton--Schulz iteration is that it converges only if $\|X_0\|_2 < \sqrt{3}$. Therefore, the initial matrix should be normalized before the iteration begins. Ideally, one would divide the matrix by its spectral norm so that $\|X_0\|_2 = 1$, but computing the spectral norm is expensive. The standard approach is to normalize the matrix using the Frobenius norm; here we use a normalization method based on the $1$-norm and $\infty$-norm, as in the QDWH implementation in JAX\footnote{\url{https://github.com/jax-ml/jax/blob/23db456a8acd01a04ed5a9f87f8265cc21703926/jax/_src/tpu/linalg/qdwh.py\#L132}}:

$$
X_0 = M / \sqrt{\| M \|_1 \| M \|_{\infty}}, \quad \text{since}\quad
\| M \|_2 \leq \sqrt{\| M \|_1 \| M \|_{\infty}}.
$$

The $1$-norm and $\infty$-norm are straightforward to compute, as they correspond to the maximum $\ell_1$ norm of the columns and rows of the matrix, respectively.

\begin{algorithm}[ht]
\caption{Normalization for Newton-Schulz based iterations}
\label{alg:normalization}
\begin{algorithmic}[1]
\REQUIRE
Matrix $M \in \mathbb{R}^{m \times n}$ ($m \geq n$)
\ENSURE
Normalized matrix $\hat{M}$
\STATE $a \gets \|M\|_1 \|M \|_{\infty}$
\STATE $\hat{M} \gets M / \sqrt{a}$
\RETURN $\hat{M}$
\end{algorithmic}
\end{algorithm}

\subsection{Handling Rank-Deficient Cases}
\label{appendix:impl_details:rank_deficient}

Importantly, CANS and QDWH iterations do not converge to an orthogonal polar factor when applied to a rank-deficient matrix $M$ with rank $r < n$. Instead, both iterations converge to a factor
\begin{equation*}
    W_r = U \begin{pmatrix}
    I_r & 0\\
    0 & 0\\
    \end{pmatrix} V^\top = \begin{pmatrix}
    U_r & 0
    \end{pmatrix} V^\top,
\end{equation*}
where $U$ and $V$ are the left and right singular factors of $M$ respectively. 
Therefore, the matrix $W_rV$ does not have orthonormal columns, as required in the definition of the SVD. This issue can be resolved by orthogonalizing the matrix $W_rV$ using a QR factorization~\cite[Section 5.5]{nakatsukasa2013stable}.
Importantly, the QR routine used in this procedure should return an $R$ factor with non-negative diagonal entries.
In practice, the rank of a matrix cannot be computed stably, so rank-deficient cases can be detected by checking the column norms of $WV$. See the experiments for rank-deficient matrices in Appendix~\ref{appendix:additional_exps:low_rank}.

\subsection{Matrix Generation Scheme}
\label{appendix:impl_details:gen}

We generate a random matrix $M \in \mathbb{R}^{n \times m}$ with $\kappa_2(M) = c$ and $\operatorname{rank}(M) = r$ using the following scheme. First, we generate two matrices $G_1 \in \mathbb{R}^{n \times m}$ and $G_2 \in \mathbb{R}^{m \times m}$ with entries sampled independently from the standard normal distribution. Then, we perform thin QR factorizations:
\begin{equation*}
G_1 = Q_1 R_1, \quad G_2 = Q_2 R_2.
\end{equation*}
Finally, we obtain the matrix
\begin{equation*}
M = Q_1 \Sigma Q_2,
\end{equation*}
where $\Sigma \in \mathbb{R}^{m \times m}$ is a diagonal matrix with diagonal entries $\Sigma_{ii} = c^{(r - i) / (r - 1)}$ for $i \leq r$, and $\Sigma_{ii} = 0$ otherwise.

We generate a random matrix $M \in \mathbb{R}^{n \times n}$ with spectral gap $\Delta$ using the same scheme. First, we generate two matrices $G_1 \in \mathbb{R}^{n \times n}$ and $G_2 \in \mathbb{R}^{n \times n}$ with entries sampled independently from the standard normal distribution and perform QR factorizations. Finally, the matrix is obtained via:
\begin{equation*}
M = Q_1 \Sigma Q_2,
\end{equation*}
where $\Sigma \in \mathbb{R}^{n \times n}$ is a diagonal matrix with diagonal entries $\Sigma_{ii} = i$ for $i < n$, and $\Sigma_{nn} = n - 1 + \Delta$.

\subsection{Reproducibility Details}
\label{appendix:impl_details:repro}

All experiments were conducted on an NVIDIA B200 GPU with 192 GB of VRAM, running Debian Linux 12. Each individual experiment took at most one hour, and the total compute budget did not exceed one GPU day.

For the CANS iteration, we use a polynomial of degree $3$ with a limit of $50$ iterations. For preprocessing, we use $2$ iterations, and the parameter $\delta$ is set to $0.99$, as in the original implementation~\cite{grishina2025accelerating}. As a stopping criterion, we use a tolerance of $10^{-5}$ for fp32 experiments and $10^{-3}$ for tf32 experiments.

For CANS SVD, we use $\varepsilon_{\texttt{QR}} = 10^{-5}$ for fp32 experiments and $10^{-3}$ for tf32 experiments to detect rank-deficient cases.

\section{Additional Experiments}
\label{appendix:additional_exps}

\subsection{Scaling QR Factorization and Matrix Multiplications on Modern GPUs}
\label{appendix:additional_exps:scaling}

Recent advances in GPU engineering, driven by the growth of deep learning popularity, have made matrix multiplication operations very fast on GPUs. We empirically compare how the performance of matrix multiplication scales relative to QR decomposition. We provide a comparison across several NVIDIA GPU architectures: A100, H100, H200, and B200. As shown in Figure~\ref{fig:exp:algo_time}, QR factorization does not substantially speed up beyond the A100 architecture, whereas matrix multiplications in tf32 precision continue to speed up with newer generations of GPUs. Moreover, Figure~\ref{fig:exp:qr_mm} shows that matrix multiplications can be much faster than QR factorization across various matrix sizes. This performance gap becomes larger with newer GPU generations.

\begin{figure}[ht]
    \centering
    \begin{subfigure}[t]{0.48\textwidth}
        \includegraphics[width=\linewidth]{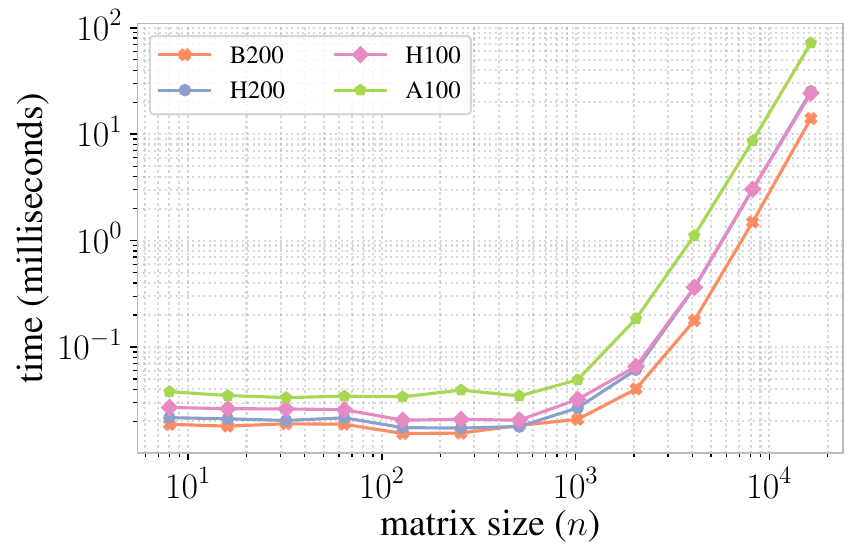}
        \caption{Matrix multiplication computation in tf32 precision}
        \label{fig:exp:algo_time:mm_tf32}
    \end{subfigure}
    ~
    \begin{subfigure}[t]{0.48\textwidth}
        \includegraphics[width=\linewidth]{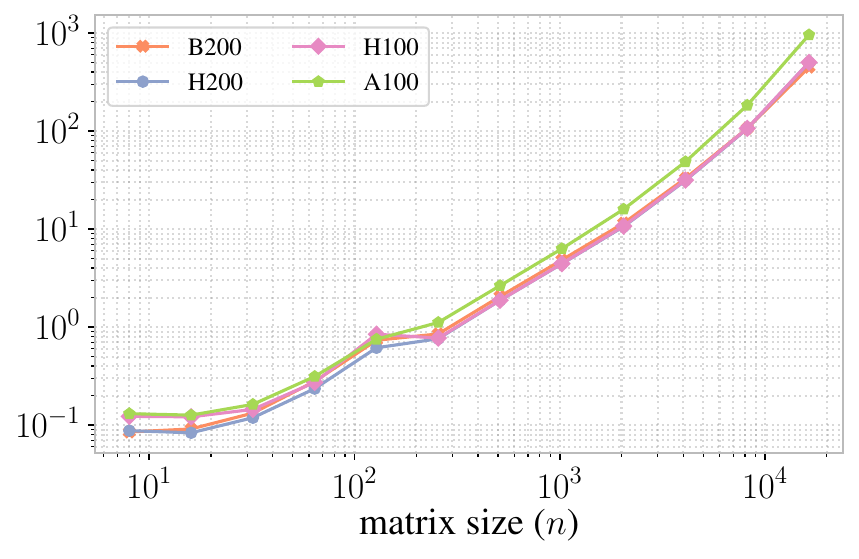}
        \caption{QR decomposition computation}
        \label{fig:exp:algo_time:qr}
    \end{subfigure}
    \caption{Comparison of computation time for matrix multiplications in tf32 precision (left plot) and QR decompositions (right plot) across different GPUs on random square matrices of various sizes. The total wall-clock time is averaged over $100$ trials.}
    \label{fig:exp:algo_time}
\end{figure}

\begin{figure}[ht]
    \centering
    \begin{subfigure}[t]{0.48\textwidth}
        \includegraphics[width=\linewidth]{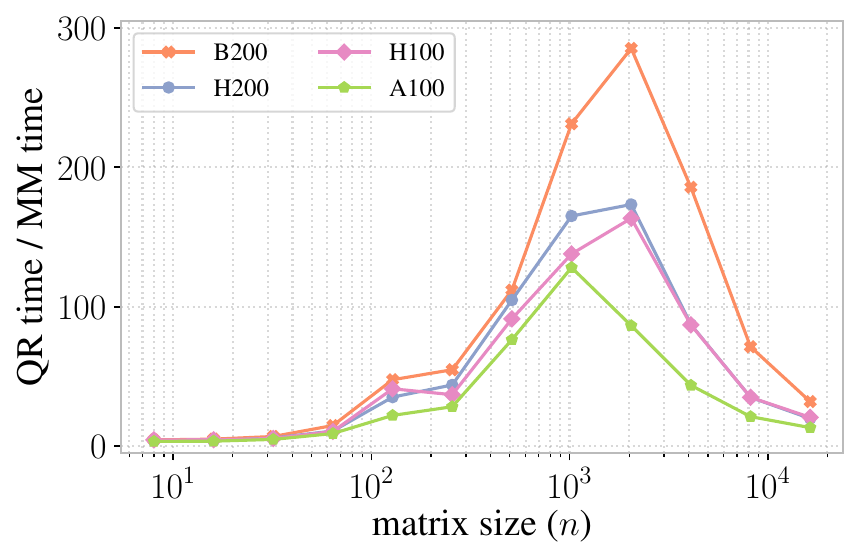}
        \caption{Matrix multiplications performed in tf32}
        \label{fig:exp:qr_mm:tf32}
    \end{subfigure}
    ~
    \begin{subfigure}[t]{0.48\textwidth}
        \includegraphics[width=\linewidth]{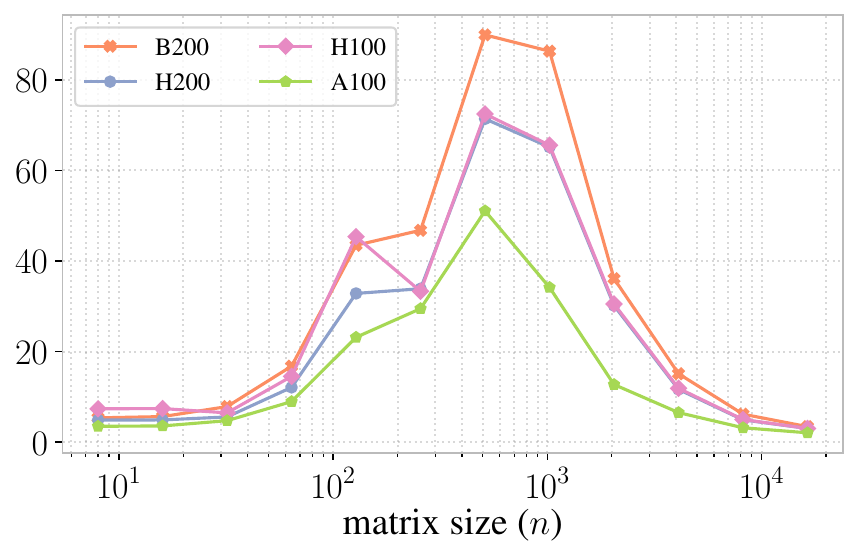}
        \caption{Matrix multiplications performed in fp32}
        \label{fig:exp:qr_mm:fp32}
    \end{subfigure}
    \caption{Comparison illustrating how many matrix multiplications can be executed within the runtime of a single QR decomposition for square matrices of different sizes across different GPUs. In the left plot, matrix multiplications are performed in tf32 precision. In the right plot, matrix multiplications are performed in fp32 precision. The total wall-clock time is averaged over $100$ trials.}
    \label{fig:exp:qr_mm}
\end{figure}

\subsection{Low Rank Matrices}
\label{appendix:additional_exps:low_rank}

We provide additional experiments comparing the performance and accuracy of different algorithms on low-rank square random matrices of varying sizes. Tables~\ref{tab:forward:recon_error_low} and~\ref{tab:forward:ortho_error_low}, as well as Figure~\ref{fig:exp:forward:low_rank}, present the results. The CUDA POLAR algorithm failed to compute the SVD even for matrices with rank one less than full rank. The other methods demonstrate stability across various rank values. Notably, CANS SVD in fp32 precision runs slower than QDWH SVD on large matrices compared to the full-rank case, where they run in approximately the same time.

\begin{figure}[ht]
    \centering
    \includegraphics[width=0.48\linewidth]{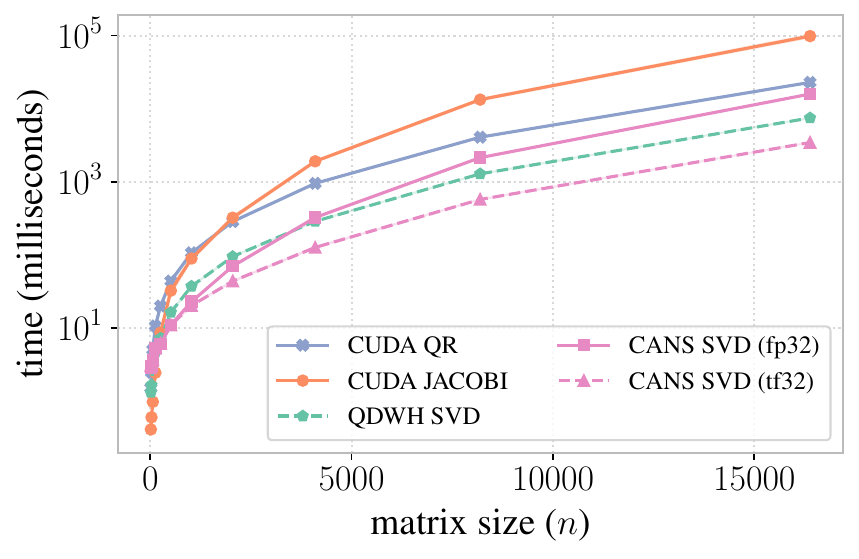}
    \caption{Comparison of SVD algorithms on a random low rank matrix $M \in \mathbb{R}^{n \times n}$ with $\operatorname{rank} (M) = \lfloor n /10 \rfloor$. Total wall-clock time was averaged over $100$ trials and reported with the standard deviation. \textbf{Note:} we do not report CUDA POLAR for this setting, since it has substantially large reconstruction error. And algorithms have different accuracy, see Tables~\ref{tab:forward:recon_error_low} and \ref{tab:forward:ortho_error_low} for a more comprehensive analysis. The experiments are conducted on an NVIDIA B200 GPU.}
    \label{fig:exp:forward:low_rank}
\end{figure}

\begin{table}[ht]
    \centering
    \small
    \caption{Comparison of SVD algorithms accuracy on a random square matrix $M \in \mathbb{R}^{4096 \times 4096}$ for different ranks. Reported relative reconstruction error $\|M - U\Sigma V^\top\|_F / \|M\|_F$, with standard deviation. Metrics were averaged over $100$ sampled matrices. \colorbox{red!10}{Red} indicates cases with significantly large error.}
    \begin{tabular}{l|ccccc}
        \toprule
        Method & $1$ & $64$ & $1024$ & $4095$ & $4096$ \\
        \midrule
        CANS SVD (fp32) & $1.9_{\color{gray}\scriptscriptstyle\pm .0} \cdot 10^{-6}$ & $2.7_{\color{gray}\scriptscriptstyle\pm .1} \cdot 10^{-6}$ & $4.3_{\color{gray}\scriptscriptstyle\pm .0} \cdot 10^{-6}$ & $8.4_{\color{gray}\scriptscriptstyle\pm .4} \cdot 10^{-6}$ & $4.9_{\color{gray}\scriptscriptstyle\pm .0} \cdot 10^{-6}$ \\
        CANS SVD (tf32) & $5.3_{\color{gray}\scriptscriptstyle\pm .2} \cdot 10^{-4}$ & $7.5_{\color{gray}\scriptscriptstyle\pm .2} \cdot 10^{-4}$ & $1.1_{\color{gray}\scriptscriptstyle\pm .0} \cdot 10^{-3}$ & $9.8_{\color{gray}\scriptscriptstyle\pm .0} \cdot 10^{-4}$ & $9.7_{\color{gray}\scriptscriptstyle\pm .0} \cdot 10^{-4}$ \\
        QDWH SVD & $1.7_{\color{gray}\scriptscriptstyle\pm .1} \cdot 10^{-6}$ & $1.9_{\color{gray}\scriptscriptstyle\pm .0} \cdot 10^{-6}$ & $2.7_{\color{gray}\scriptscriptstyle\pm .0} \cdot 10^{-6}$ & $3.5_{\color{gray}\scriptscriptstyle\pm .0} \cdot 10^{-6}$ & $3.4_{\color{gray}\scriptscriptstyle\pm .0} \cdot 10^{-6}$ \\
        CUDA POLAR & \colorbox{red!10}{$8.3_{\color{gray}\scriptscriptstyle\pm 22.7} \cdot 10^{0}$} & \colorbox{red!10}{$7.6_{\color{gray}\scriptscriptstyle\pm .0} \cdot 10^{1}$} & \colorbox{red!10}{$7.6_{\color{gray}\scriptscriptstyle\pm .0} \cdot 10^{1}$} & \colorbox{red!10}{$7.6_{\color{gray}\scriptscriptstyle\pm .0} \cdot 10^{1}$} & $4.3_{\color{gray}\scriptscriptstyle\pm .0} \cdot 10^{-6}$ \\
        CUDA QR & $7.4_{\color{gray}\scriptscriptstyle\pm .5} \cdot 10^{-7}$ & $1.3_{\color{gray}\scriptscriptstyle\pm .2} \cdot 10^{-6}$ & $6.6_{\color{gray}\scriptscriptstyle\pm .4} \cdot 10^{-6}$ & $1.8_{\color{gray}\scriptscriptstyle\pm .1} \cdot 10^{-5}$ & $1.8_{\color{gray}\scriptscriptstyle\pm .1} \cdot 10^{-5}$ \\
        CUDA JACOBI & $3.5_{\color{gray}\scriptscriptstyle\pm .6} \cdot 10^{-5}$ & $9.5_{\color{gray}\scriptscriptstyle\pm .2} \cdot 10^{-5}$ & $5.1_{\color{gray}\scriptscriptstyle\pm .4} \cdot 10^{-4}$ & $1.0_{\color{gray}\scriptscriptstyle\pm .2} \cdot 10^{-4}$ & $9.7_{\color{gray}\scriptscriptstyle\pm .4} \cdot 10^{-4}$ \\
        \bottomrule
    \end{tabular}
    \label{tab:forward:recon_error_low}
\end{table}

\begin{table}[ht]
    \centering
    \small
    \caption{Comparison of SVD algorithms accuracy on a random square matrix $M \in \mathbb{R}^{4096 \times 4096}$ for different ranks. Reported relative orthogonality $\max\{\|U^\top U - I\|_F / \|I\|_F, \|V^\top V - I\|_F / \|I\|_F \}$, with standard deviation. Metrics were averaged over $100$ sampled matrices. }
    \begin{tabular}{l|ccccc}
        \toprule
        Method & $1$ & $64$ & $1024$ & $4095$ & $4096$ \\
        \midrule
        CANS SVD (fp32) & $1.6_{\color{gray}\scriptscriptstyle\pm .0} \cdot 10^{-6}$ & $1.5_{\color{gray}\scriptscriptstyle\pm .0} \cdot 10^{-6}$ & $1.8_{\color{gray}\scriptscriptstyle\pm .0} \cdot 10^{-6}$ & $3.1_{\color{gray}\scriptscriptstyle\pm 1.2} \cdot 10^{-6}$ & $3.7_{\color{gray}\scriptscriptstyle\pm .0} \cdot 10^{-6}$ \\
        CANS SVD (tf32) & $1.9_{\color{gray}\scriptscriptstyle\pm .2} \cdot 10^{-6}$ & $1.9_{\color{gray}\scriptscriptstyle\pm .0} \cdot 10^{-6}$ & $2.2_{\color{gray}\scriptscriptstyle\pm .0} \cdot 10^{-6}$ & $3.0_{\color{gray}\scriptscriptstyle\pm .0} \cdot 10^{-6}$ & $6.6_{\color{gray}\scriptscriptstyle\pm .0} \cdot 10^{-4}$ \\
        QDWH SVD & $1.6_{\color{gray}\scriptscriptstyle\pm .0} \cdot 10^{-6}$ & $1.5_{\color{gray}\scriptscriptstyle\pm .0} \cdot 10^{-6}$ & $1.8_{\color{gray}\scriptscriptstyle\pm .0} \cdot 10^{-6}$ & $3.6_{\color{gray}\scriptscriptstyle\pm .0} \cdot 10^{-6}$ & $3.6_{\color{gray}\scriptscriptstyle\pm .0} \cdot 10^{-6}$ \\
        CUDA POLAR & $7.5_{\color{gray}\scriptscriptstyle\pm 1.4} \cdot 10^{-6}$ & $3.2_{\color{gray}\scriptscriptstyle\pm .0} \cdot 10^{-6}$ & $6.9_{\color{gray}\scriptscriptstyle\pm .0} \cdot 10^{-6}$ & $1.2_{\color{gray}\scriptscriptstyle\pm .0} \cdot 10^{-5}$ & $4.1_{\color{gray}\scriptscriptstyle\pm .0} \cdot 10^{-6}$ \\
        CUDA QR & $7.9_{\color{gray}\scriptscriptstyle\pm .2} \cdot 10^{-6}$ & $7.7_{\color{gray}\scriptscriptstyle\pm .2} \cdot 10^{-6}$ & $6.4_{\color{gray}\scriptscriptstyle\pm .2} \cdot 10^{-6}$ & $9.6_{\color{gray}\scriptscriptstyle\pm .3} \cdot 10^{-6}$ & $9.6_{\color{gray}\scriptscriptstyle\pm .2} \cdot 10^{-6}$ \\
        CUDA JACOBI & $2.5_{\color{gray}\scriptscriptstyle\pm .2} \cdot 10^{-6}$ & $3.8_{\color{gray}\scriptscriptstyle\pm .1} \cdot 10^{-4}$ & $1.7_{\color{gray}\scriptscriptstyle\pm .1} \cdot 10^{-3}$ & $2.1_{\color{gray}\scriptscriptstyle\pm .0} \cdot 10^{-3}$ & $2.1_{\color{gray}\scriptscriptstyle\pm .0} \cdot 10^{-3}$ \\
        \bottomrule
    \end{tabular}
    \label{tab:forward:ortho_error_low}
\end{table}

\subsection{Comparison with Additional Baselines}
\label{appendix:additional_exps:gram}

For a more comprehensive ablation study, we compare CANS SVD with additional baselines in terms of accuracy and performance. The additional baselines are listed below:
\begin{itemize}
\item \textbf{EVD of Gram Matrix.} A naive way of computing SVD via EVD of the Gram matrix $M^\top M$.
\item \textbf{EVD of Extended Matrix.} A way of computing SVD via EVD of the extended matrix $\begin{pmatrix}
    0 & M \\ M^\top & 0
\end{pmatrix}$.
\item \textbf{SciPy SVD.} Computing SVD on CPU using the SciPy routine to double-check the correctness of the proposed method.
\end{itemize}
We expect the Gram-based algorithm to lose accuracy due to its numerical instability (see Remark~\ref{remark:demmel}), while the extended-matrix-based algorithm should provide better accuracy but be much slower. The SciPy routine should be accurate, but it is computed on CPU and therefore is much slower, so we do not report its time.

Results are presented in Table~\ref{appendix:tab:forward:gram} and Figure~\ref{fig:exp:forward:gram}. As anticipated, the SciPy routine, Extended EVD, and CANS SVD in single precision have approximately the same accuracy, while the Gram-based algorithm starts losing accuracy even when $\kappa_2(M) = 100$ due to numerical instability and has a large reconstruction error for $\kappa_2(M) = 10^3$. Extended EVD is slower than the Gram-based method and CANS SVD.

\begin{table}[ht!]
    \centering
    \small
    \caption{Comparison of the accuracy of SVD algorithms on a random square matrix $M \in \mathbb{R}^{4096 \times 4096}$ for different condition numbers. The reported metric is the relative reconstruction error $\|M - U\Sigma V^\top\|_F / \|M\|_F$, with standard deviation. Results are averaged over $100$ sampled matrices.}
    \begin{tabular}{l|ccccc}
        \toprule
        Method & $10$ & $10^{2}$ & $10^{3}$ & $10^{4}$ & $10^{6}$ \\
        \midrule
        CANS SVD (fp32) & $4.9_{\color{gray}\scriptscriptstyle\pm .0} \cdot 10^{-6}$ & $4.2_{\color{gray}\scriptscriptstyle\pm .0} \cdot 10^{-6}$ & $4.3_{\color{gray}\scriptscriptstyle\pm .1} \cdot 10^{-6}$ & $4.4_{\color{gray}\scriptscriptstyle\pm .0} \cdot 10^{-6}$ & $5.3_{\color{gray}\scriptscriptstyle\pm .1} \cdot 10^{-6}$ \\
        Extended & $5.8_{\color{gray}\scriptscriptstyle\pm .0} \cdot 10^{-6}$ & $5.6_{\color{gray}\scriptscriptstyle\pm .1} \cdot 10^{-6}$ & $5.4_{\color{gray}\scriptscriptstyle\pm .1} \cdot 10^{-6}$ & $1.1_{\color{gray}\scriptscriptstyle\pm .0} \cdot 10^{-5}$ & $9.0_{\color{gray}\scriptscriptstyle\pm .0} \cdot 10^{-6}$ \\
        SciPy & $5.2_{\color{gray}\scriptscriptstyle\pm .0} \cdot 10^{-6}$ & $5.1_{\color{gray}\scriptscriptstyle\pm .0} \cdot 10^{-6}$ & $5.2_{\color{gray}\scriptscriptstyle\pm .0} \cdot 10^{-6}$ & $5.2_{\color{gray}\scriptscriptstyle\pm .0} \cdot 10^{-6}$ & $5.1_{\color{gray}\scriptscriptstyle\pm .0} \cdot 10^{-6}$ \\
        Gram & $4.4_{\color{gray}\scriptscriptstyle\pm .0} \cdot 10^{-6}$ & $2.3_{\color{gray}\scriptscriptstyle\pm .0} \cdot 10^{-5}$ & $1.1_{\color{gray}\scriptscriptstyle\pm .0} \cdot 10^{-4}$ & $3.9_{\color{gray}\scriptscriptstyle\pm .0} \cdot 10^{-4}$ & $6.9_{\color{gray}\scriptscriptstyle\pm .0} \cdot 10^{-4}$ \\
        \bottomrule
    \end{tabular}
    \label{appendix:tab:forward:gram}
\end{table}

\begin{figure}[ht]
    \centering
    \includegraphics[width=0.48\linewidth]{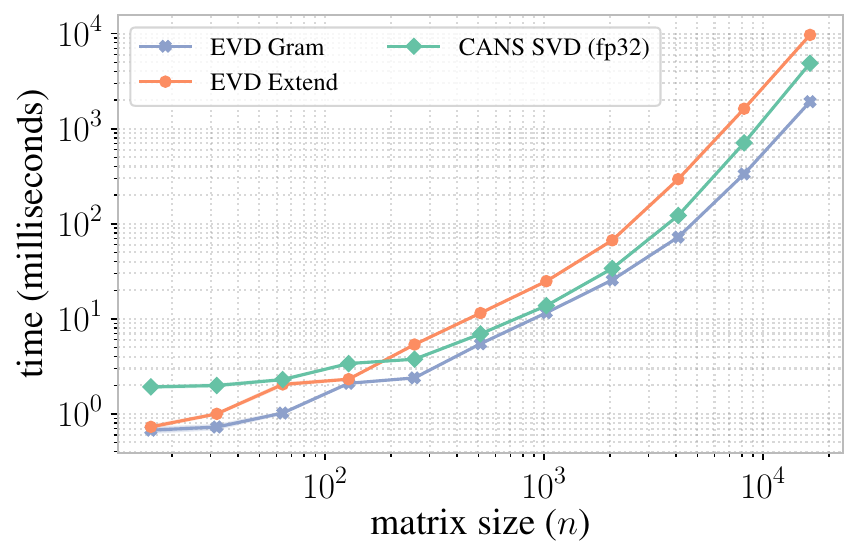}
    \caption{Comparison of SVD algorithms on a square random matrices $M \in \mathbb{R}^{n \times n}$ with $\kappa_2(M) = 10$. Total wall-clock time was averaged over $100$ trials and reported with the standard deviation. The experiments are conducted on an NVIDIA B200 GPU.}
    \label{fig:exp:forward:gram}
\end{figure}

\subsection{Ablation on Iterations for Computing Polar Factor}
\label{appendix:additional_exps:polar_factor}

In this section, we provide an ablation study of different methods for computing the polar factor that rely heavily on matrix multiplications. For a more difficult setting, we test all methods on a random square matrix $M \in \mathbb{R}^{1024\times 1024}$ with a large condition number $\kappa_2(M) = 10^6$.

Results are shown in Figure~\ref{fig:exp:forward:ablation}. For the Muon Newton--Schulz iteration~\cite{jordan2024muon} and Polar-Express~\cite{amsel2025polar}, we use the default parameters provided in the authors implementations. As expected, the Muon iteration does not converge to an orthogonal matrix and yields a high relative error. The original Newton--Schulz iteration converges slowly, whereas Polar-Express and CANS converge substantially faster and exhibit similar convergence behavior. However, Polar-Express reaches a plateau at a higher error level than CANS and only drops to a comparable accuracy at the final iteration. This may be caused by the fact that the input to the polynomial is divided by $1.01$ at all iterations except the final one. This behavior makes it difficult to use a stopping criterion for this method. CANS does not have this issue, and we therefore choose it as our main iteration.

\begin{figure}[ht]
    \centering
    \includegraphics[width=0.8\linewidth]{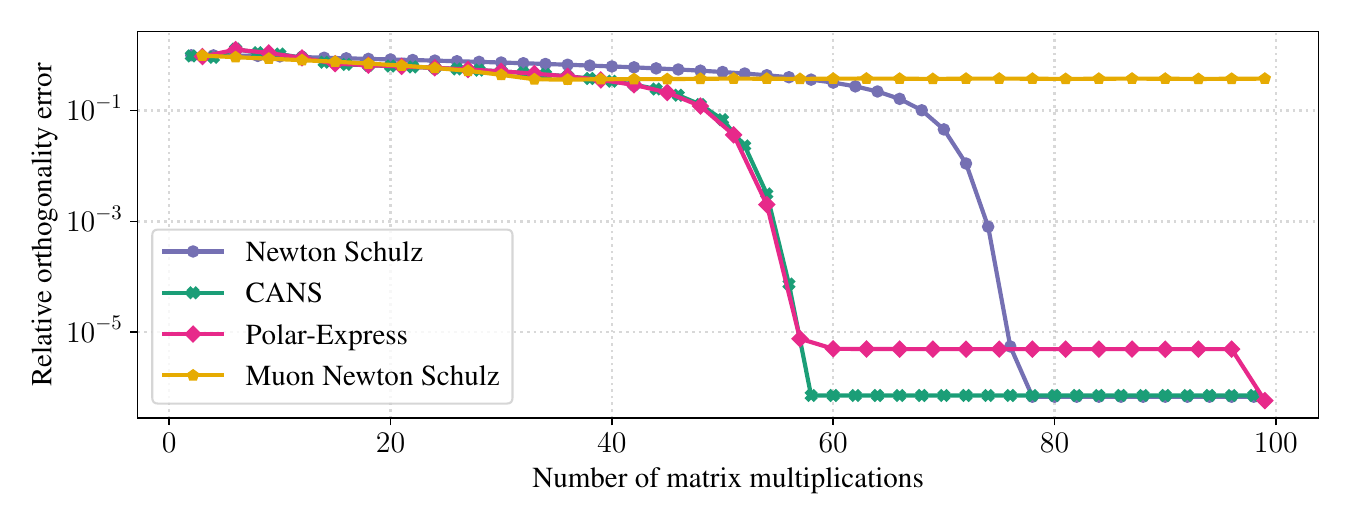}
    \caption{Comparison of different methods for computing polar decomposition on a square random matrices $M \in \mathbb{R}^{n \times n}$ with $\kappa_2(M) = 10^6$. The reported metric is relative orthogonality error $\| W^\top W - I \|_F / \|I\|_F$.}
    \label{fig:exp:forward:ablation}
\end{figure}

\subsection{CUDA JACOBI Hyperparameters}
\label{appendix:additional_exps:jacobi}

According to the cuSOLVER documentation\footnote{\url{https://docs.nvidia.com/cuda/cusolver/index.html}}, CUDA JACOBI provides two hyperparameters controlling accuracy: \texttt{tolerance} and \texttt{max\_sweeps}. We observe that the default value of the \texttt{max\_sweeps} parameter is sufficient to reach the required precision, as stated in the documentation. Moreover, the number of sweeps does not change substantially across different settings.

We test different values of \texttt{tolerance} in Figure~\ref{fig:exp:forward:jacobi}. Decreasing the \texttt{tolerance} parameter makes the algorithm more accurate but slower. For our experiments, we choose the default parameter value, which reaches approximately the same accuracy as CANS SVD in mixed precision. Making this algorithm more accurate would make it slower and allow it to outperform our method in fewer cases, making the comparison unfair.

\begin{figure}[ht]
    \centering
    \includegraphics[width=0.48\linewidth]{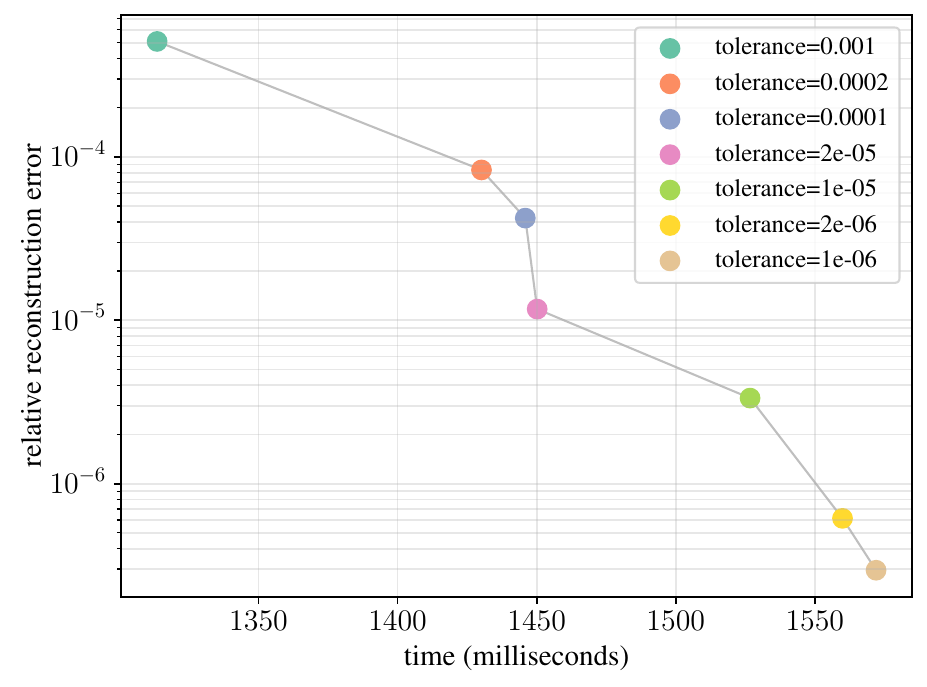}
    \caption{Pareto front for different CUDA JACOBI tolerance settings on square random matrices $M \in \mathbb{R}^{4096 \times 4096}$ with $\kappa_2(M) = 10^2$.}
    \label{fig:exp:forward:jacobi}
\end{figure}

\subsection{Choosing \texorpdfstring{$\delta$}{delta} Hyperparameter}
\label{appendix:additional_exps:delta}

Here, we provide an ablation study on the hyperparameter $\delta$ in the CANS algorithm~\cite{grishina2025accelerating}.

Let us briefly introduce how $\delta$ appears in CANS. In a nutshell, CANS algorithm divided into two steps: preprocessing step, and main CANS iteration. In the preprocessing stage, we apply a polynomial that finds the largest interval $[t,1]$ whose values can be mapped into the interval $[1-\delta,1+\delta]$. Then, CANS iteration is applied with parameters $a = 1-\delta$ and $b = 1+\delta$. For singular values inside the interval $[a,b]$, the iteration has accelerated convergence (see Corollary 3.5 of~\cite{grishina2025accelerating}). For singular values smaller than $a$, one can also show convergence, moreover, it is faster than the classical NS iteration.

In Tables~\ref{tab:delta_ablation:fp32} and~\ref{tab:delta_ablation:tf32}, the reported relative reconstruction error depends on different $\delta$ values and condition numbers. The hyperparameter $\delta$ provides a tradeoff between accuracy and speed: increasing $\delta$ enlarges the interval of accelerated convergence, but also increases the stability constant, which results in reduced numerical accuracy. In our paper, we chose $\delta = 0.99$ based on empirical results across different arithmetic precisions. We could have made our method even more accurate by choosing a smaller value of $\delta$. However, we argue that selected setting is superior as it allows us to achieve accuracy comparable to the methods used by default libraries, while running faster.

\begin{table}[ht]
\centering
\caption{Ablation on different $\delta$ for preprocessing on a random matrix $M \in \mathbb{R}^{4096 \times 4096}$ for different condition numbers. The reported metric is the relative reconstruction error of CANS SVD (fp32).}
\label{tab:delta_ablation:fp32}
\begin{tabular}{c|cccc}
\toprule
$\kappa_2(M) \downarrow \ \delta \rightarrow$ & $0.9$ & $0.99$ & $0.999$ & $0.9999$ \\
\midrule
$10^2$ & $3.30 \cdot 10^{-6}$ & $4.26 \cdot 10^{-6}$ & $7.25 \cdot 10^{-6}$ & $1.84 \cdot 10^{-5}$ \\
$10^4$ & $3.77 \cdot 10^{-6}$ & $4.41 \cdot 10^{-6}$ & $1.05 \cdot 10^{-5}$ & $2.51 \cdot 10^{-5}$ \\
$10^6$ & $4.00 \cdot 10^{-6}$ & $5.38 \cdot 10^{-6}$ & $1.69 \cdot 10^{-5}$ & $4.88 \cdot 10^{-5}$ \\
\bottomrule
\end{tabular}
\end{table}

\begin{table}[ht]
\centering
\caption{Ablation on different $\delta$ for preprocessing on a random matrix $M \in \mathbb{R}^{4096 \times 4096}$ for different condition numbers. The reported metric is the relative reconstruction error of CANS SVD (tf32).}
\label{tab:delta_ablation:tf32}
\begin{tabular}{c|cccc}
\toprule
$\kappa_2(M) \downarrow \ \delta \rightarrow$ & $0.9$ & $0.99$ & $0.999$ & $0.9999$ \\
\midrule
$10$ & $7.20 \cdot 10^{-4}$ & $9.73 \cdot 10^{-4}$ & $2.98 \cdot 10^{-3}$ & $9.15 \cdot 10^{-3}$ \\
$10^2$ & $6.57 \cdot 10^{-4}$ & $8.24 \cdot 10^{-4}$ & $1.84 \cdot 10^{-3}$ & $4.33 \cdot 10^{-3}$ \\
$10^3$ & $7.04 \cdot 10^{-4}$ & $8.52 \cdot 10^{-4}$ & $1.91 \cdot 10^{-3}$ & $3.63 \cdot 10^{-3}$ \\
\bottomrule
\end{tabular}
\end{table}

\subsection{Difference Between TF32 and BF16 Matrix Multiplications}
\label{appendix:additional_exps:tf32_vs_bf16}

We compare matrix multiplication in bf16 and tf32 precision on an NVIDIA B200 GPU. Figure~\ref{fig:exp:tf32_vs_bf16} presents the results of this comparison. In Figure~\ref{fig:exp:tf32_vs_bf16:mm}, the wall-clock time of the matrix multiplications themselves is compared; we observe a negligible speedup when using bf16 precision compared to tf32. In Figure~\ref{fig:exp:tf32_vs_bf16:svd}, the wall-clock time of the SVD computation with different precisions is shown, and the effect of using bf16 precision is almost invisible.

\begin{figure}[ht]
    \centering
    \begin{subfigure}[t]{0.48\textwidth}
        \includegraphics[width=\linewidth]{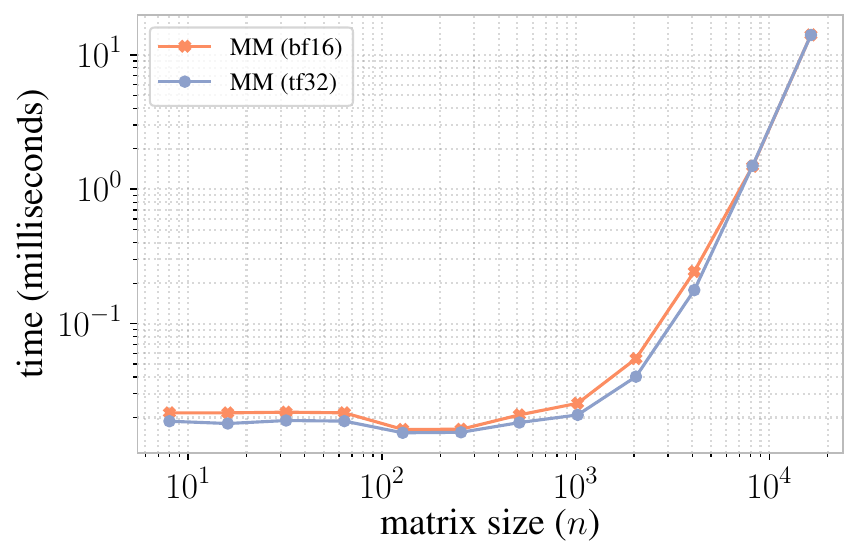}
        \caption{Matrix multiplications time}
        \label{fig:exp:tf32_vs_bf16:mm}
    \end{subfigure}
    ~
    \begin{subfigure}[t]{0.48\textwidth}
        \includegraphics[width=\linewidth]{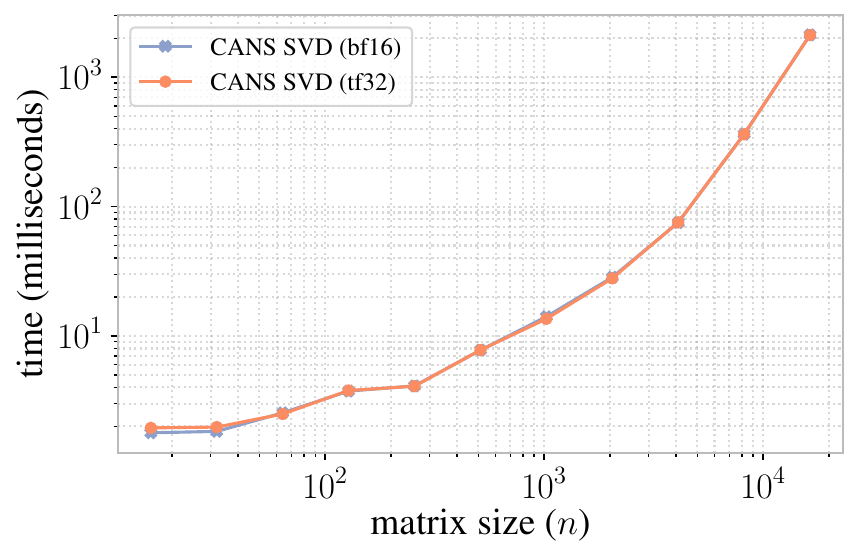}
        \caption{SVD computation time}
        \label{fig:exp:tf32_vs_bf16:svd}
    \end{subfigure}
    \caption{Performance comparison of different precisions (tf32 and bf16) for matrix multiplication. The left plot shows the runtime of the matrix multiplications themselves. The right plot shows how the SVD runtime changes when the multiplication precision is varied. The total wall-clock time is averaged over $100$ trials. Experiments are conducted on an NVIDIA B200 GPU.}
    \label{fig:exp:tf32_vs_bf16}
\end{figure}

\subsection{Performance Comparison of Methods for Solving Symmetric Lyapunov Equation}
\label{appendix:additional_exps:eigh_vs_iter}

Here, we conduct an ablation study of different methods for solving the symmetric Lyapunov equation
\begin{equation*}
HX + XH = M,
\end{equation*}
where $M$ is a symmetric matrix and $H$ is symmetric positive semidefinite. We compare the iterative method proposed in \cite{song2022fast} with a closed-form solution based on symmetric EVD (EIGH). Figure~\ref{fig:exp:additional_exps:lyapunov} summarizes the results. For small ($n \leq 64$) and large ($n > 1024$) matrices, the EIGH-based approach is faster than the iterative method. Moreover, as the matrix size $n$ increases, the EIGH-based approach becomes increasingly more efficient. Additionally, we observe that the convergence of the iterative method depends on the condition number of the matrix, whereas the EIGH-based approach does not.

\begin{figure}[ht]
    \centering
    \begin{subfigure}[t]{0.48\textwidth}
        \includegraphics[width=\linewidth]{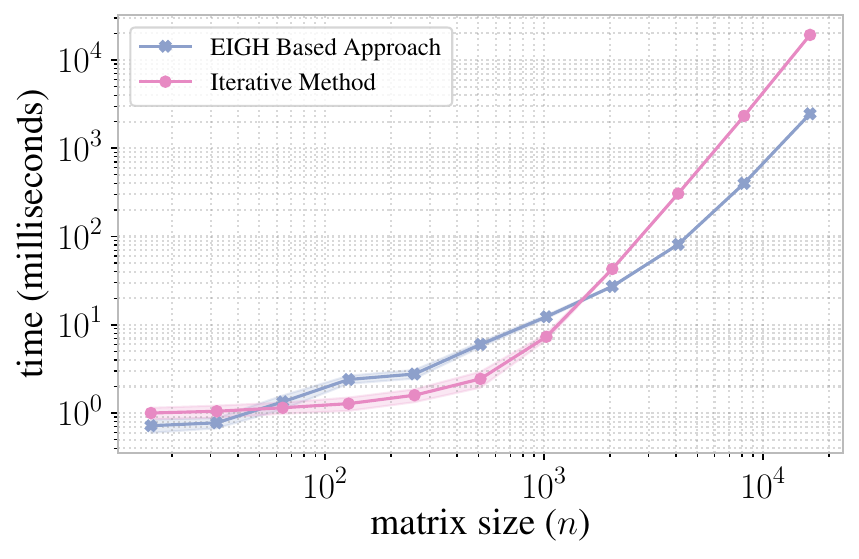}
        \caption{Comparison performed on random square matrices with $\kappa_2(M) = \kappa_2(H) = 100$ and varying size $n$.}
        \label{fig:exp:additional_exps:lyapunov:size}
    \end{subfigure}
    ~
    \begin{subfigure}[t]{0.48\textwidth}
        \includegraphics[width=\linewidth]{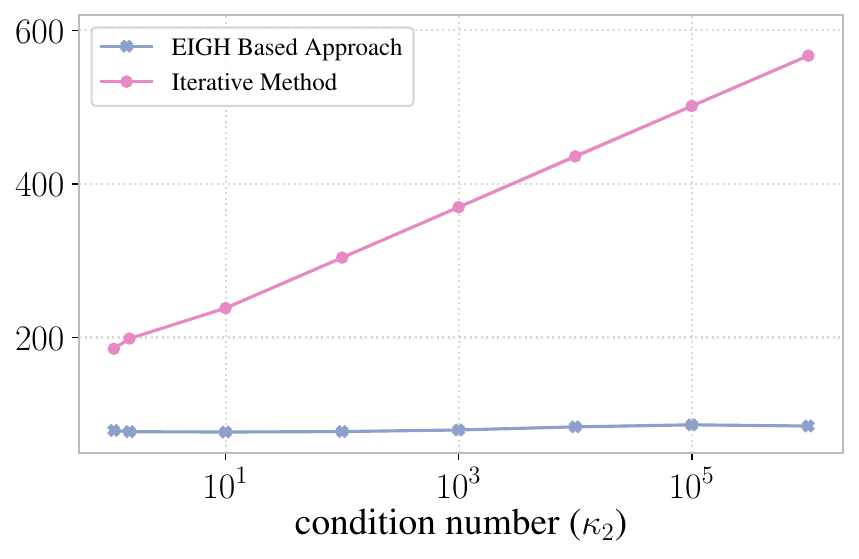}
        \caption{Comparison performed on random square matrices $M, H \in \mathbb{R}^{4096 \times 4096}$ with varying condition number $\kappa_2(M) = \kappa_2(H)$.}
        \label{fig:exp:additional_exps:lyapunov:cond}
    \end{subfigure}
    \caption{Comparison of different methods for solving the symmetric Lyapunov equation under different scenarios. The total wall-clock time was averaged over $100$ trials and reported with the standard deviation. The experiments were conducted on an NVIDIA B200 GPU.}
    \label{fig:exp:additional_exps:lyapunov}
\end{figure}

\section{\texorpdfstring{SVD-inv~\cite{zhang2024differentiable} inconsistency}{SVD-inv inconsistency}}
\label{appendix:example:explanation}

The SVD-inv work~\cite{zhang2024differentiable} contains a bug in Equation 25 in Theorem 3.1: the pseudoinverse of the matrix $\begin{pmatrix}
\sigma & -\sigma \\
-\sigma & \sigma \\
\end{pmatrix}$ is not equal to $\begin{pmatrix}
1 / 2\sigma & -1 / 2\sigma \\
0 & 0 \\
\end{pmatrix}$.

\begin{example}[SVD-inv~\cite{zhang2024differentiable} inconsistency]
\label{example:inconsistency}
    Consider the SVD of the identity matrix, $I = U \Sigma V^\top$. At this degenerate point (all singular values are equal), the SVD-inv backward expression of~\cite{zhang2024differentiable} exhibits inconsistency. Despite the fact that $U = V$ at this point, distinct gradients are returned during the backward pass even in symmetric cases. If $\overline{U} = \overline{\Sigma} = 0$, then the formula gives
    $$
    \overline{I} = 0.
    $$
    If $\overline{V} = \overline{\Sigma} = 0$, then the same formula gives
    $$
    \overline{I} = U (U^\top \overline{U})_{\text{off-diag}} V^\top.
    $$
This inconsistency indicates that, at degenerate points, the SVD-inv backward pass may return gradients that do not correspond to the expected gradient of the underlying matrix-valued function.
\end{example}

\begin{explanation}[Explanation for Example~\ref{example:inconsistency}]
    For the first case, setting $\overline{U} = \overline{\Sigma} = 0$ and using $VV^\top = I$ in the final formulas of~\citep[Theorem 3.2]{zhang2024differentiable} gives:
    \begin{align*}
        \overline{I} &= U\left.[ \cancel{(F \odot \left.[ U^\top \overline{U} - \overline{U}^\top U\right.])} \Sigma + \cancel{T \odot (U^\top \overline{U})} \right.] V^\top \\
        &+ \cancel{(I - UU^\top) \overline{U} \Sigma^{-1} V^\top} \\
        &+ \cancel{U (I \odot \overline{\Sigma}) V^\top} \\
        &+ U \Sigma (F \odot \left.[ V^\top \overline{V} - \overline{V}^\top V\right.]) V^\top \\
        &+ U \Sigma^{-1} \overline{V}^\top \cancel{(I - VV^\top)} \\
        &=
        U \Sigma (F \odot \left.[ V^\top \overline{V} - \overline{V}^\top V\right.]) V^\top = 0,
    \end{align*}
    since all singular values of the identity matrix are equal and thus $F = 0$.

    For the second case, setting $\overline{V} = \overline{\Sigma} = 0$, using $UU^\top = I$, and setting $F = 0$ gives:
    \begin{align*}
        \overline{I} &= U\left.[ \cancel{(F \odot \left.[ U^\top \overline{U} - \overline{U}^\top U\right.])} \Sigma + T \odot (U^\top \overline{U}) \right.] V^\top \\
        &+ \cancel{(I - UU^\top)} \overline{U} \Sigma^{-1} V^\top \\
        &+ \cancel{U (I \odot \overline{\Sigma}) V^\top} \\
        &+ \cancel{U \Sigma (F \odot \left.[ V^\top \overline{V} - \overline{V}^\top V\right.]) V^\top} \\
        &+ \cancel{U \Sigma^{-1} \overline{V}^\top (I - VV^\top)} \\
        &=
        U (T \odot (U^\top \overline{U})) V^\top = U (U^\top \overline{U})_{\text{off-diag}} V^\top,
    \end{align*}
    since all singular values of $I$ are equal and therefore all elements in $T$ are set to $1$ except the diagonal elements, which are set to $0$.
\end{explanation}

\section{Proofs for Polar Derivatives}
\label{appendix:polar_formuls}

In this section, we present formulas and their proofs for differentiating the polar decomposition, which is used in our method.

Before proving the main formulas, we state the following useful lemma.
\begin{lemma}
\label{lem:lyapunov_identity}
Let $H \in \mathbb{R}^{n \times n}$ be a symmetric positive semi-definite matrix.
Let $Y$ be a matrix satisfying the Lyapunov equation
\begin{equation}
\label{eq:lyap_Y}
H Y + Y H = A,
\end{equation}
where $A \in \mathbb{R}^{n \times n}$ is an arbitrary matrix.
Let $X$ be a solution of
\begin{equation}
\label{eq:lyap_X}
H X + X H = B,
\end{equation}
where $B \in \mathbb{R}^{n \times n}$ is an arbitrary matrix.
Then the following identity holds:
\begin{equation}
\label{eq:lyap_inner}
\langle A, X \rangle
=
\langle Y, B \rangle .
\end{equation}
\end{lemma}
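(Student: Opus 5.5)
The plan is to use the self-adjointness of the Lyapunov operator $\mathcal{T}(Z) := HZ + ZH$ with respect to the Frobenius inner product $\langle P, Q\rangle = \operatorname{tr}(P^\top Q)$. Since $H = H^\top$, for any $P,Q$ we have
\[
\langle \mathcal{T}(P), Q\rangle = \operatorname{tr}(P^\top H Q) + \operatorname{tr}(H P^\top Q) = \operatorname{tr}(P^\top H Q) + \operatorname{tr}(P^\top Q H) = \langle P, \mathcal{T}(Q)\rangle ,
\]
where we used $(HP)^\top = P^\top H$, $(PH)^\top = H P^\top$, and cyclicity of the trace. This is the only ingredient needed.

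Applying it with $P = Y$ and $Q = X$, and substituting \eqref{eq:lyap_Y} and \eqref{eq:lyap_X}, gives
\[
\langle A, X\rangle = \langle HY + YH, X\rangle = \langle Y, HX + XH\rangle = \langle Y, B\rangle ,
\]
which is \eqref{eq:lyap_inner}.

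No step is a genuine obstacle. Notably, the argument never inverts $\mathcal{T}$, so it does not use positive semidefiniteness, uniqueness of solutions, or invertibility of $H$. It holds for any symmetric $H$ and any pair of solutions $X, Y$, including in the singular case where $\sigma_i + \sigma_j = 0$ for some pair $i,j$. This is worth stating explicitly, because the lemma will be applied in the proof of Theorem~\ref{thm:polar_backward}, where solutions are selected via the mask $T$.

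As an optional sanity check, one can verify the identity in the eigenbasis $H = V\Sigma V^\top$. There the equation becomes entrywise $(\sigma_i+\sigma_j)\hat Y_{ij} = \hat A_{ij}$ with $\hat Y = V^\top Y V$ and $\hat A = V^\top A V$. Then $\sum_{ij} \hat A_{ij}\hat X_{ij} = \sum_{ij}(\sigma_i+\sigma_j)\hat Y_{ij}\hat X_{ij} = \sum_{ij}\hat Y_{ij}\hat B_{ij}$, by orthogonal invariance of the inner product.
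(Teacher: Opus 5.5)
Your proof is correct and is essentially the paper's argument: the paper also expands $\langle HY+YH, X\rangle$ and moves $H$ across the Frobenius inner product using $H=H^\top$ and cyclicity of the trace, i.e., it uses the self-adjointness of $Z\mapsto HZ+ZH$. Your remark is also right that positive semidefiniteness, invertibility of $H$, and uniqueness of the solutions are never used; the paper's proof does not use them either.
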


\begin{proof}
We compute
\begin{equation*}
\langle A, X \rangle
=
\langle H Y + Y H, X \rangle
=
\langle H X, Y \rangle + \langle X H, Y \rangle
=
\langle H X + X H, Y \rangle
=
\langle B, Y \rangle,
\end{equation*}
which proves the claim.
\end{proof}

\begin{proposition}
\label{prop:polar_backward}
Let $M \in \mathbb{R}^{m \times n}$ ($m 
\ge n$) be a full-rank matrix with the polar decomposition
\begin{equation*}
M = W H.
\end{equation*}
Let $\mathcal{L}(M) = \ell(W,H)$ be a scalar loss function, and assume that the partial derivatives
\begin{equation*}
\overline{W} := \frac{\partial \ell}{\partial W},
\qquad
\overline{H} := \frac{\partial \ell}{\partial H}
\end{equation*}
are given.
Then the gradient of $\mathcal{L}$ with respect to $M$ is
\begin{equation*}
\frac{\partial \mathcal{L}}{\partial M}
=
\overline{W} H^{-1}
+
2 M X,
\end{equation*}
where $X$ is the symmetric solution of the Lyapunov equation
\begin{equation*}
H X + X H
=
(\overline{H} - W^\top \overline{W} H^{-1})_{\operatorname{sym}}.
\end{equation*}
\end{proposition}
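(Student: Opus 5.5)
We propose to work entirely with differentials and read off the gradient from $\mathrm d\mathcal L=\langle \overline W,\mathrm dW\rangle+\langle \overline H,\mathrm dH\rangle$, rewriting the right-hand side as $\langle G,\mathrm dM\rangle$ for an explicit $G$.

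\textbf{Step 1: the forward differential.} Since $M$ has full rank, $H=(M^\top M)^{1/2}$ is symmetric positive definite and $W=MH^{-1}$, both smooth in $M$. From $H^2=M^\top M$ we get the Lyapunov equation
\[
H\,\mathrm dH+\mathrm dH\,H=\mathrm dM^\top M+M^\top \mathrm dM=2\,(M^\top\mathrm dM)_{\operatorname{sym}}.
\]
It has a unique (symmetric) solution because the eigenvalues satisfy $\sigma_i+\sigma_j>0$. Differentiating $W=MH^{-1}$ gives
\[
\mathrm dW=\mathrm dM\,H^{-1}-MH^{-1}\mathrm dH\,H^{-1}=\mathrm dM\,H^{-1}-W\,\mathrm dH\,H^{-1}.
\]

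\textbf{Step 2: collect the $\mathrm dH$ terms.} We have
\[
\langle \overline W,\mathrm dW\rangle=\langle \overline W H^{-1},\mathrm dM\rangle-\langle W^\top\overline W H^{-1},\mathrm dH\rangle,
\]
using the symmetry of $H^{-1}$. Hence
\[
\mathrm d\mathcal L=\langle \overline WH^{-1},\mathrm dM\rangle+\langle \overline H-W^\top\overline WH^{-1},\mathrm dH\rangle .
\]
Because $\mathrm dH$ is symmetric, the second pairing equals $\langle C,\mathrm dH\rangle$ with $C=(\overline H-W^\top\overline WH^{-1})_{\operatorname{sym}}$.

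\textbf{Step 3: transpose through the Lyapunov operator.} This is the key step, and it uses Lemma~\ref{lem:lyapunov_identity}. Take $Y=\mathrm dH$ with $A=2(M^\top\mathrm dM)_{\operatorname{sym}}$, and take $X$ to be the solution of $HX+XH=C$, so $B=C$. The lemma then gives
\[
\langle C,\mathrm dH\rangle=\langle A,X\rangle=2\langle (M^\top \mathrm dM)_{\operatorname{sym}},X\rangle .
\]
Since $C$ is symmetric and the solution is unique, $X$ is symmetric. Therefore $\langle (M^\top\mathrm dM)_{\operatorname{sym}},X\rangle=\langle M^\top\mathrm dM,X\rangle=\langle MX,\mathrm dM\rangle$.

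\textbf{Conclusion.} Combining the steps gives $\overline M=\overline WH^{-1}+2MX$. The only delicate points are the following.
\begin{itemize}
\item Existence, uniqueness and symmetry of the Lyapunov solutions. These follow by diagonalizing $H=V\Sigma V^\top$: the equation becomes $(\sigma_i+\sigma_j)\widehat X_{ij}=\widehat C_{ij}$ with $\widehat X=V^\top XV$ and $\widehat C=V^\top CV$, and $\sigma_i+\sigma_j>0$ by full rank.
\item Applying the symmetrization to the correct factor when passing between $\langle\cdot,\mathrm dH\rangle$ and $\langle C,\cdot\rangle$.
\end{itemize}
The diagonalized form also yields the explicit expression $X=V\bigl(T\odot(V^\top CV)\bigr)V^\top$ of Theorem~\ref{thm:polar_backward}.
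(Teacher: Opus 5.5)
Your proposal is correct and follows essentially the same route as the paper: write $\mathrm dW=(\mathrm dM-W\,\mathrm dH)H^{-1}$, collect the $\mathrm dH$ terms into $\langle (\overline H-W^\top\overline WH^{-1})_{\operatorname{sym}},\mathrm dH\rangle$, and transpose through the Lyapunov operator coming from $H^2=M^\top M$ via Lemma~\ref{lem:lyapunov_identity}. The differences are cosmetic: you obtain $\mathrm dW$ by differentiating $W=MH^{-1}$ rather than rearranging $\mathrm dM=\mathrm dW\,H+W\,\mathrm dH$, and you spell out the existence, uniqueness and symmetry of $X$ and the handling of the symmetrization, which the paper leaves implicit.
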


\begin{proof}
Since $M$ is full rank, the polar factor $H$ is invertible.
Moreover, because $H = H^\top$, its differential satisfies
\begin{equation*}
\mathrm{d}H = \mathrm{d}H^\top,
\end{equation*}
i.e., $\mathrm{d}H$ is symmetric.

The differential of the polar factorization $M = W H$ is given by
\begin{equation}
\label{eq:diff_polar}
\mathrm{d}M = \mathrm{d}W\,H + W\,\mathrm{d}H.
\end{equation}
Since $H$ is nonsingular, this relation can be rearranged as
\begin{equation}
\label{eq:diff_polar_w}
\mathrm{d}W = (\mathrm{d}M - W\,\mathrm{d}H)H^{-1}.
\end{equation}

The total differential of the loss function is
\begin{equation}
\label{eq:diff_polar_total_loss}
\mathrm{d}\mathcal{L}
=
\langle \overline{W}, \mathrm{d}W \rangle
+
\langle \overline{H}, \mathrm{d}H \rangle .
\end{equation}

Substituting~\eqref{eq:diff_polar_w} into~\eqref{eq:diff_polar_total_loss} yields
\begin{equation*}
\mathrm{d}\mathcal{L}
=
\left\langle
\overline{W},\,
(\mathrm{d}M - W\,\mathrm{d}H)H^{-1}
\right\rangle
+
\left\langle
\overline{H},\,
\mathrm{d}H
\right\rangle,
\end{equation*}
which can be rewritten as
\begin{equation*}
\mathrm{d}\mathcal{L}
=
\left\langle
\overline{W} H^{-1},\,
\mathrm{d}M
\right\rangle
-
\left\langle
W^\top \overline{W} H^{-1},\,
\mathrm{d}H
\right\rangle
+
\left\langle
\overline{H},\,
\mathrm{d}H
\right\rangle.
\end{equation*}
Therefore,
\begin{equation}
\label{eq:diff_polar_open}
\mathrm{d}\mathcal{L}
=
\left\langle \overline{W} H^{-1}, \mathrm{d}M \right\rangle
+
\left\langle
\overline{H} - W^\top \overline{W} H^{-1},
\mathrm{d}H
\right\rangle.
\end{equation}

Since $H^2 = M^\top M$, differentiation gives the Lyapunov equation
\begin{equation}
\label{eq:diff_polar_part1}
H\,\mathrm{d}H + \mathrm{d}H\,H
=
\mathrm{d}M^\top M + M^\top \mathrm{d}M.
\end{equation}
Let $X$ denote the symmetric solution of
\begin{equation}
\label{eq:diff_polar_part2}
H X + X H
=
(\overline{H} - W^\top \overline{W} H^{-1})_{\operatorname{sym}}.
\end{equation}
Applying Lemma~\ref{lem:lyapunov_identity} to~\eqref{eq:diff_polar_part1} and~\eqref{eq:diff_polar_part2}, we obtain
\begin{equation*}
\left\langle
\overline{H} - W^\top \overline{W} H^{-1},
\mathrm{d}H
\right\rangle
= \left\langle
(\overline{H} - W^\top \overline{W} H^{-1})_{\operatorname{sym}},
\mathrm{d}H
\right\rangle
=
\langle X, \mathrm{d}M^\top M + M^\top \mathrm{d}M \rangle,
\end{equation*}
which simplifies to
\begin{equation}
\label{eq:diff_polar_solve_lyap}
\left\langle
\overline{H} - W^\top \overline{W} H^{-1},
\mathrm{d}H
\right\rangle
=
2 \langle M X, \mathrm{d}M \rangle .
\end{equation}

Combining~\eqref{eq:diff_polar_open} and~\eqref{eq:diff_polar_solve_lyap}, we finally obtain
\begin{equation*}
\mathrm{d}\mathcal{L}
=
\left\langle
\overline{W} H^{-1} + 2 M X,
\mathrm{d}M
\right\rangle,
\end{equation*}
which proves the stated gradient formula.
\end{proof}

So, the task of computing the derivative of the polar decomposition reduces to solving a Lyapunov equation. The following proposition describes how this equation can be solved.

\begin{proposition}[Solving the Symmetric Lyapunov Equation]
\label{prop:lyapunov_eigendecomp}
Let $H \in \mathbb{R}^{n \times n}$ be a symmetric positive definite matrix with eigenvalue decomposition
\begin{equation*}
H = Q \Lambda Q^\top,
\end{equation*}
where $Q$ is orthogonal and $\Lambda = \mathrm{diag}(\lambda_1,\dots,\lambda_n)$ with $\lambda_i > 0$.
For a given matrix $C$, the unique solution $X$ of the Lyapunov equation
\begin{equation*}
H X + X H = C
\end{equation*}
is given by
\begin{equation*}
X = Q \widetilde{X} Q^\top,
\qquad
\widetilde{X}_{ij} = \frac{(Q^\top C Q)_{ij}}{\lambda_i + \lambda_j}.
\end{equation*}
\end{proposition}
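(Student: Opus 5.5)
The plan is to diagonalize the Lyapunov operator in the eigenbasis of $H$. Set $\widetilde C := Q^\top C Q$ and write the unknown as $X = Q\widetilde X Q^\top$. Since $Q$ is orthogonal, this substitution is a bijection on $\mathbb{R}^{n\times n}$, so nothing is lost by it.

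Substituting $H = Q\Lambda Q^\top$ into $HX + XH = C$ and multiplying on the left by $Q^\top$ and on the right by $Q$ gives the equivalent equation
\[
\Lambda \widetilde X + \widetilde X \Lambda = \widetilde C .
\]
Because $\Lambda$ is diagonal, this equation decouples entrywise into $(\lambda_i + \lambda_j)\widetilde X_{ij} = \widetilde C_{ij}$ for $1 \le i,j \le n$.

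Positive definiteness gives $\lambda_i + \lambda_j > 0$ for every pair $(i,j)$. Each scalar equation therefore has exactly one solution, $\widetilde X_{ij} = \widetilde C_{ij}/(\lambda_i+\lambda_j)$. This settles both existence and uniqueness: the linear map $X \mapsto HX + XH$ is injective on the finite-dimensional space $\mathbb{R}^{n\times n}$, hence bijective. Transforming back gives $X = Q\widetilde X Q^\top$, which is the stated formula.

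One remark is useful for how the result is applied in Proposition~\ref{prop:polar_backward}. If $C$ is symmetric, then $\widetilde C$ is symmetric, so $\widetilde X$ is symmetric, and hence $X$ is symmetric. This is why ``the symmetric solution'' used in Proposition~\ref{prop:polar_backward} coincides with the unique solution given here. The same computation also recovers Theorem~\ref{thm:polar_backward}, where the factor $T_{ij} = 1/(\sigma_i+\sigma_j)$ appears with $Q = V$ and $\Lambda = \Sigma$.

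None of these steps is a real obstacle. The only point that needs care is stating explicitly that $\lambda_i + \lambda_j \neq 0$, which is exactly where positive definiteness (and not just semidefiniteness) is used.
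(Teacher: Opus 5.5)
Your proof is correct. Conjugating by $Q$ reduces the equation to $\Lambda\widetilde X+\widetilde X\Lambda=Q^\top C Q$, which decouples entrywise with coefficients $\lambda_i+\lambda_j>0$; this gives existence and uniqueness at once, and your symmetry remark correctly justifies the ``symmetric solution'' wording in Proposition~\ref{prop:polar_backward}. The paper gives no argument of its own here, only a citation to Golub--Van Loan, and your eigenbasis diagonalization is the standard argument that reference relies on, so the routes coincide.
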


\begin{proof}
   In book~\cite{golub2013matrix}, p. 226.
\end{proof}

\begin{remark}
\label{rem:lyapunov_lazy}
If the Eigenvalue Decomposition of the polar factor $H$ is available, solving the Lyapunov equation in the backward pass reduces to simple elementwise operations in the eigenvectors of $H$.
As a result, computing the gradient is almost free compared to the forward pass and does not require additional matrix factorizations.
Moreover, this computation does not depend on spectral gaps, since all terms involve only sums $\lambda_i + \lambda_j$.
\end{remark}

Nevertheless, this equation can also be solved using alternative approaches; for example, the work~\cite{song2022fast} proposes an iterative method. However, in our setting we explicitly seek the SVD via polar decomposition, and therefore the corresponding EVD is already available, which allows us to solve fast this equation.

\begin{proof}[Proof for Theorem~\ref{thm:polar_backward}]
Since $M$ has full column rank, $H$ is symmetric positive definite. By
Proposition~\ref{prop:polar_backward},
\[
\overline M=\overline W H^{-1}+2MX,
\]
where $X$ is the unique symmetric solution of
\[
HX+XH=C.
\]
Applying Proposition~\ref{prop:lyapunov_eigendecomp} to
$H=V\Sigma V^\top$ gives
\[
X
=
V\Bigl(T\odot(V^\top C V)\Bigr)V^\top,
\qquad
T_{ij}=\frac{1}{\sigma_i+\sigma_j}.
\]
Substituting this expression for $X$ into the formula above proves the result.
\end{proof}

\section{Regularized Gradient For Polar Decomposition}
\label{appendix:regularized_polar}

\begin{proposition}[
Tikhonov Regularization for Linear Equations,
{\cite[Sec.~18.16]{Tyrtyshnikov2025MatrixAnalysis}},
{\cite[Sec.~4.5]{hansen2010discrete}}
]
\label{prop:regularization}
Let $A\in\mathbb{R}^{m\times n}$, $b\in\mathbb{R}^m$, and suppose that
\[
\|A-A_\varepsilon\|\leq \varepsilon,
\qquad
\|b-b_\varepsilon\|\leq \varepsilon.
\]
Let $x_0=A^+b$ be the minimum-norm least-squares solution of $Ax=b$, and,
for $\alpha>0$, let
\[
x_{\varepsilon,\alpha}
=
(A_\varepsilon^\top A_\varepsilon+\alpha I)^{-1}
A_\varepsilon^\top b_\varepsilon.
\]
Then, for $0<\varepsilon\leq 1$ and $\alpha=\sqrt{\varepsilon}$,
\[
\|x_{\varepsilon,\alpha}-x_0\|
\leq c\sqrt{\varepsilon},
\]
where $c=c(A,b)>0$ is independent of $\varepsilon$.
\end{proposition}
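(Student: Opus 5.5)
The plan is to split the error into a \emph{regularization (bias) error} and a \emph{data-perturbation error}. We introduce the unperturbed Tikhonov solution
\[
x_\alpha=(A^\top A+\alpha I)^{-1}A^\top b
\]
and write
\[
x_{\varepsilon,\alpha}-x_0=(x_{\varepsilon,\alpha}-x_\alpha)+(x_\alpha-x_0).
\]
We then show that each piece is $O(\sqrt\varepsilon)$ once $\alpha=\sqrt\varepsilon$. Throughout, let $P=A^\top A+\alpha I$ and $P_\varepsilon=A_\varepsilon^\top A_\varepsilon+\alpha I$. Both are symmetric positive definite with $\|P^{-1}\|,\|P_\varepsilon^{-1}\|\le 1/\alpha$.

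\emph{Bias term.} Take the SVD $A=\sum_{i\le r}\sigma_i u_iv_i^\top$ with $\sigma_r>0$ the smallest nonzero singular value, and set $\beta_i=u_i^\top b$. Then $x_0=\sum_{i\le r}(\beta_i/\sigma_i)v_i$ and $x_\alpha=\sum_{i\le r}\frac{\sigma_i\beta_i}{\sigma_i^2+\alpha}v_i$. The component of $b$ orthogonal to $\operatorname{range}(A)$ is annihilated by $A^\top$ in both expressions. Subtracting componentwise gives coefficients $-\frac{\alpha}{\sigma_i^2+\alpha}\cdot\frac{\beta_i}{\sigma_i}$. Hence
\[
\|x_\alpha-x_0\|\le \frac{\alpha}{\sigma_r^2}\|x_0\|.
\]
This is $O(\sqrt\varepsilon)$ with a constant depending only on $A,b$. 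The same computation also shows $\|x_\alpha\|\le\|x_0\|$, which we reuse below.

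\emph{Perturbation term.} We decompose
\[
x_{\varepsilon,\alpha}-x_\alpha
=P_\varepsilon^{-1}A_\varepsilon^\top(b_\varepsilon-b)
+P_\varepsilon^{-1}(A_\varepsilon-A)^\top b
+\bigl(P_\varepsilon^{-1}-P^{-1}\bigr)A^\top b,
\]
and bound the three terms in turn.

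For the first term, the SVD of $A_\varepsilon$ gives $\|P_\varepsilon^{-1}A_\varepsilon^\top\|\le\max_s \frac{s}{s^2+\alpha}\le\frac{1}{2\sqrt\alpha}$. The term is therefore at most $\varepsilon/(2\varepsilon^{1/4})\le \varepsilon^{1/2}/2$ for $\varepsilon\le1$.

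For the second term, the bound is $\varepsilon\|b\|/\alpha=\sqrt\varepsilon\,\|b\|$.

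For the third term, we use the resolvent identity $P_\varepsilon^{-1}-P^{-1}=P_\varepsilon^{-1}(P-P_\varepsilon)P^{-1}$ and note that $P^{-1}A^\top b=x_\alpha$. The term equals $P_\varepsilon^{-1}(A^\top A-A_\varepsilon^\top A_\varepsilon)x_\alpha$. Writing
\[
A^\top A-A_\varepsilon^\top A_\varepsilon=A^\top(A-A_\varepsilon)+(A-A_\varepsilon)^\top A_\varepsilon
\]
gives $\|A^\top A-A_\varepsilon^\top A_\varepsilon\|\le\varepsilon(2\|A\|+\varepsilon)\le\varepsilon(2\|A\|+1)$. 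The term is then bounded by $\frac{\varepsilon}{\alpha}(2\|A\|+1)\|x_0\|=\sqrt\varepsilon\,(2\|A\|+1)\|x_0\|$.

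Summing all contributions yields $\|x_{\varepsilon,\alpha}-x_0\|\le c\sqrt\varepsilon$ with
\[
c=\frac{\|x_0\|}{\sigma_r^2}+\frac12+\|b\|+(2\|A\|+1)\|x_0\|,
\]
which depends only on $A$ and $b$.

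The step I expect to be delicate is the third perturbation term. The naive bound $\|P^{-1}\|\|A^\top b\|\le\|A^\top b\|/\alpha$ would give $\varepsilon/\alpha^2=O(1)$ and destroy the rate. The key is to recognize $P^{-1}A^\top b$ as $x_\alpha$ and to use the uniform bound $\|x_\alpha\|\le\|x_0\|$ from the spectral computation. Similarly, the first term needs the sharper estimate $\|P_\varepsilon^{-1}A_\varepsilon^\top\|\le 1/(2\sqrt\alpha)$, or at least the cruder $\|A_\varepsilon\|/\alpha$, which also suffices here since $\varepsilon/\alpha=\sqrt\varepsilon$. The choice $\alpha=\sqrt\varepsilon$ is exactly what balances the bias $O(\alpha)$ against the perturbation error $O(\varepsilon/\alpha)$.
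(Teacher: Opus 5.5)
Your proof is correct. I checked each step:
\begin{itemize}
\item the filter-factor computation giving $\|x_\alpha-x_0\|\le\alpha\|x_0\|/\sigma_r^2$ and $\|x_\alpha\|\le\|x_0\|$;
\item the three-term split of $x_{\varepsilon,\alpha}-x_\alpha$;
\item the bound $\|P_\varepsilon^{-1}A_\varepsilon^\top\|\le 1/(2\sqrt\alpha)$;
\item the resolvent identity combined with $P^{-1}A^\top b=x_\alpha$. This is indeed the step that keeps the third term at $O(\varepsilon/\alpha)$ rather than $O(\varepsilon/\alpha^2)$.
\end{itemize}

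The paper gives no argument of its own for this proposition; it only cites Tyrtyshnikov and Hansen. Your split into a regularization error and a data error is the standard argument for such results, so the real difference is that yours is self-contained.

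That buys something concrete. Read your estimates with a general perturbation level $\delta\le 1$ in place of $\varepsilon$ and any $\alpha\in(0,1]$. They give $\|x_{\varepsilon,\alpha}-x_0\|\le C(A,b)\,(\alpha+\delta/\alpha)$, with a constant uniform over all admissible $(A_\varepsilon,b_\varepsilon)$. This scale-free form is what the proof of Proposition~\ref{prop:regularized_polar} actually uses. There, the perturbation of $H$ is only bounded by $c\varepsilon$. The Lyapunov system has perturbation $O(\varepsilon^{1/2})$ with regularization parameter $\varepsilon^{1/4}$. Neither case is literally covered by the stated hypotheses $\|A-A_\varepsilon\|\le\varepsilon$, $\alpha=\sqrt\varepsilon$, but both follow at once from your bound.

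Two small housekeeping points:
\begin{itemize}
\item State explicitly that the norms are spectral; a Frobenius-norm hypothesis implies the spectral one.
\item Set aside the case $A=0$. There $\sigma_r$ is undefined, but $x_0=x_\alpha=0$, so the bias term simply vanishes.
\end{itemize}
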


\begin{lemma}
\label{lemm:module_error}
    For any symmetric $A,B\in\mathbb{R}^{N\times N}$,
\[
\bigl\||A|-|B|\bigr\|_F \;\le\; \|A-B\|_F,
\qquad |A| := (A^\top A)^{1/2}.
\]
\end{lemma}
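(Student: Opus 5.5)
The plan is to diagonalize both matrices and compare the two differences entrywise in a mixed eigenbasis. Since $A$ and $B$ are symmetric, the spectral theorem gives $A = P\,\mathrm{diag}(a_1,\dots,a_N)\,P^\top$ and $B = Q\,\mathrm{diag}(b_1,\dots,b_N)\,Q^\top$ with $P,Q$ orthogonal and $a_i,b_j$ real. For a symmetric matrix, $A^\top A = A^2$, so $|A| = (A^2)^{1/2} = P\,\mathrm{diag}(|a_1|,\dots,|a_N|)\,P^\top$, and likewise $|B| = Q\,\mathrm{diag}(|b_1|,\dots,|b_N|)\,Q^\top$. This uses uniqueness of the positive semidefinite square root: the right-hand side is positive semidefinite and squares to $A^2$.

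Next I will use orthogonal invariance of the Frobenius norm. Set $C := P^\top Q$, which is again orthogonal. Then
\[
P^\top (A-B) Q = \mathrm{diag}(a)\,C - C\,\mathrm{diag}(b), \qquad \bigl(P^\top(A-B)Q\bigr)_{ij} = (a_i - b_j)\,C_{ij},
\]
and in exactly the same way $\bigl(P^\top(|A|-|B|)Q\bigr)_{ij} = (|a_i| - |b_j|)\,C_{ij}$. Therefore
\[
\|A-B\|_F^2 = \sum_{i,j} (a_i-b_j)^2 C_{ij}^2, \qquad \bigl\||A|-|B|\bigr\|_F^2 = \sum_{i,j} (|a_i|-|b_j|)^2 C_{ij}^2 .
\]

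The conclusion then follows termwise from the scalar reverse triangle inequality $\bigl||a|-|b|\bigr| \le |a-b|$ for real $a,b$. Each summand in the second sum is bounded by the corresponding summand in the first, since the weights $C_{ij}^2$ are nonnegative.

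There is no genuine obstacle here. The only point that needs care is that the eigenvectors are real, so the Hadamard-type weights $C_{ij}^2$ are nonnegative. Symmetry is used essentially in two places: it ensures that both $A$ and $|A|$ are diagonalized by the same $P$, and that the eigenvalues are real. For nonsymmetric matrices this argument fails, and the known bound carries an extra factor $\sqrt2$.
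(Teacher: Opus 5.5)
Your proof is correct and complete. The identities $\bigl(P^\top(A-B)Q\bigr)_{ij}=(a_i-b_j)C_{ij}$ and $\bigl(P^\top(|A|-|B|)Q\bigr)_{ij}=(|a_i|-|b_j|)C_{ij}$ with $C=P^\top Q$, orthogonal invariance of $\|\cdot\|_F$, and the scalar inequality $\bigl||a|-|b|\bigr|\le|a-b|$ are all that is needed.

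The paper gives no argument of its own here; it simply cites Bhatia's \emph{Matrix Analysis}, Eq.~(X.24). Your mixed-eigenbasis computation is the standard route to such bounds; the same identity $\|A-B\|_F^2=\sum_{i,j}(a_i-b_j)^2C_{ij}^2$ drives the Hoffman--Wielandt inequality. What you add over the citation is self-containment and a clear view of exactly where symmetry enters.

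Your argument also shows more than is stated. Replacing $|\cdot|$ by any $L$-Lipschitz $f:\mathbb{R}\to\mathbb{R}$ gives $\|f(A)-f(B)\|_F\le L\|A-B\|_F$ by the same termwise comparison. The computation for $|\cdot|$ also goes through verbatim for normal matrices, using unitary diagonalization, complex eigenvalues and weights $|C_{ij}|^2$. So in your closing remark, the real obstruction is non-normality rather than non-symmetry.
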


\begin{proof}
    In book~\cite[Eq.~(X.24)]{bhatia2013matrix}.
\end{proof}

\begin{proof}[Proof of Proposition~\ref{prop:regularized_polar}]
Since $M$ and $\widetilde M$ have full column rank,
\[
M=WH,\qquad
\widetilde M=\widetilde W\widetilde H,
\qquad
H,\widetilde H\succ0.
\]
Throughout the proof, $c>0$ may depend on $M,\overline W,\overline H$, but not on
$\varepsilon$. By smoothness of the polar decomposition in the full-rank regime and
Lemma~\ref{lemm:module_error},
\begin{equation}
\label{eq:regpolar:H_pert}
\|H-\widetilde H\|_F\leq c\varepsilon.
\end{equation}

The matrix $H^{-1}$ is the solution of $HZ=I$, whereas
$\widetilde H_\varepsilon^{-1}$ is the Tikhonov solution of the perturbed system:
\[
(\widetilde H^2+\varepsilon^{1/2}I)\widetilde H_\varepsilon^{-1}
=
\widetilde H.
\]
Applying Proposition~\ref{prop:regularization} columnwise gives
\begin{equation}
\label{eq:regpolar:Hinv_bound}
\|\widetilde H_\varepsilon^{-1}-H^{-1}\|_F
\leq c\varepsilon^{1/2}.
\end{equation}
In particular, $\|\widetilde H_\varepsilon^{-1}\|_2$ is bounded for all sufficiently
small $\varepsilon$.

Define
\[
B=
\left(
\overline H-H^{-1}M^\top\overline W H^{-1}
\right)_{\mathrm{sym}}.
\]
Using the definition of $\widetilde B_\varepsilon$, we add and subtract intermediate
products to obtain
\begin{align*}
&\widetilde H_\varepsilon^{-1}\widetilde M^\top
\overline W\widetilde H_\varepsilon^{-1}
-
H^{-1}M^\top\overline W H^{-1}
\\
&=
(\widetilde H_\varepsilon^{-1}-H^{-1})
\widetilde M^\top\overline W\widetilde H_\varepsilon^{-1}
\\
&\quad+
H^{-1}(\widetilde M^\top-M^\top)
\overline W\widetilde H_\varepsilon^{-1}
\\
&\quad+
H^{-1}M^\top\overline W
(\widetilde H_\varepsilon^{-1}-H^{-1}).
\end{align*}
Since symmetrization does not increase the Frobenius norm,
\eqref{eq:regpolar:Hinv_bound} and
$\|\widetilde M-M\|_F\leq\varepsilon$ imply
\begin{equation}
\label{eq:regpolar:B_bound}
\|\widetilde B_\varepsilon-B\|_F
\leq
c\left(
\|\widetilde H_\varepsilon^{-1}-H^{-1}\|_F
+
\|\widetilde M-M\|_F
\right)
\leq c\varepsilon^{1/2}.
\end{equation}

Let $X$ be the solution of
\[
HX+XH=B.
\]
After vectorization, this equation becomes
\[
\mathcal A\,\mathrm{vec}(X)=\mathrm{vec}(B),
\qquad
\mathcal A:=I\otimes H+H\otimes I.
\]
Similarly, let
\[
\widetilde{\mathcal A}
:=
I\otimes\widetilde H+\widetilde H\otimes I.
\]
By \eqref{eq:regpolar:H_pert} and \eqref{eq:regpolar:B_bound},
\[
\|\mathcal A-\widetilde{\mathcal A}\|
=
\mathcal O(\varepsilon),
\qquad
\|\mathrm{vec}(B)-\mathrm{vec}(\widetilde B_\varepsilon)\|
=
\mathcal O(\varepsilon^{1/2}).
\]
Thus, the effective perturbation level of the Lyapunov system is
$\mathcal O(\varepsilon^{1/2})$. The matrix $\widetilde X_\varepsilon$ defined in the
statement is precisely the Tikhonov solution
\[
\left(
\widetilde{\mathcal A}^{\top}\widetilde{\mathcal A}
+\varepsilon^{1/4}I
\right)
\mathrm{vec}(\widetilde X_\varepsilon)
=
\widetilde{\mathcal A}^{\top}
\mathrm{vec}(\widetilde B_\varepsilon).
\]
Therefore, Proposition~\ref{prop:regularization} yields
\begin{equation}
\label{eq:regpolar:X_bound}
\|\widetilde X_\varepsilon-X\|_F
\leq c\varepsilon^{1/4}.
\end{equation}

By Theorem~\ref{thm:polar_backward},
\[
\overline M
=
\overline W H^{-1}+2MX,
\]
whereas
\[
\overline{\widetilde M}_\varepsilon
=
\overline W\widetilde H_\varepsilon^{-1}
+
2\widetilde M\widetilde X_\varepsilon.
\]
Consequently,
\begin{align*}
\|\overline M-\overline{\widetilde M}_\varepsilon\|_F
&\leq
\|\overline W(H^{-1}-\widetilde H_\varepsilon^{-1})\|_F
+
2\|M(X-\widetilde X_\varepsilon)\|_F
\\
&\quad+
2\|(M-\widetilde M)\widetilde X_\varepsilon\|_F.
\end{align*}
By \eqref{eq:regpolar:Hinv_bound} and
\eqref{eq:regpolar:X_bound}, the matrices
$\widetilde X_\varepsilon$ remain bounded for sufficiently small $\varepsilon$.
Hence,
\[
\|\overline M-\overline{\widetilde M}_\varepsilon\|_F
\leq
c\left(
\varepsilon^{1/2}+\varepsilon^{1/4}+\varepsilon
\right)
\leq
c\varepsilon^{1/4},
\]
which proves the claim.
\end{proof}

\section{Eigenvalue Decomposition Derivatives}
\label{appendix:eigenvalue_dev}

\begin{proposition}[Existing gradient of EVD, from~\cite{giles2008extended}]
\label{prop:ed_gradient}
Let $M \in \mathbb{R}^{n \times n}$ be a symmetric matrix with EVD
\[
M = V \Sigma V^\top,
\]
where $V$ is orthogonal and $\Sigma = \mathrm{diag}(\sigma_1,\dots,\sigma_n)$.
Given a scalar loss function $L = L(V,\Sigma)$, the gradient of $L$ with respect to $M$
is given by
\begin{equation*}
\frac{\partial L}{\partial M}
=
V
\left(
K^\top \odot \left( V^\top \frac{\partial L}{\partial V} \right)
+
\frac{\partial L}{\partial \Sigma}
\right)
V^\top.
\label{eq:ed_grad}
\end{equation*}

The matrix $K \in \mathbb{R}^{n \times n}$ is defined elementwise as
\begin{equation}
\label{eq:defK}
K_{ij}
=
\begin{cases}
\dfrac{1}{\sigma_i - \sigma_j}, & i \neq j, \\[6pt]
0, & i = j.
\end{cases}
\end{equation}
\end{proposition}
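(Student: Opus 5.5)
The plan is the standard differential-plus-adjoint argument. Assume first that the $\sigma_i$ are distinct, so $V$ and $\Sigma$ are locally smooth functions of $M$. Differentiate $M=V\Sigma V^\top$ and set $\mathrm dV = V\Omega$; orthogonality forces $\Omega^\top=-\Omega$. Conjugating by $V$ gives
\[
V^\top \mathrm dM\, V = \Omega\Sigma - \Sigma\Omega + \mathrm d\Sigma .
\]
Reading off the diagonal yields $\mathrm d\Sigma = I\odot(V^\top\mathrm dM V)$. Reading off the off-diagonal part, since $(\Omega\Sigma-\Sigma\Omega)_{ij}=\Omega_{ij}(\sigma_j-\sigma_i)$, yields $\Omega_{ij} = (V^\top \mathrm dM V)_{ij}/(\sigma_j-\sigma_i)$ for $i\neq j$. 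With $K$ as in \eqref{eq:defK}, this is $\Omega = K^\top\odot(V^\top\mathrm dMV)$, because $K^\top_{ij}=1/(\sigma_j-\sigma_i)$. This is exactly the commutator (Sylvester-type) equation mentioned before Theorem~\ref{thm:eigh_backward_blocks}, solved entrywise in the eigenbasis.

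Next, write the loss differential as
\[
\mathrm dL=\langle \overline V, V\Omega\rangle+\langle\overline\Sigma,\mathrm d\Sigma\rangle=\langle V^\top\overline V,\Omega\rangle + \langle \overline\Sigma, \mathrm d\Sigma\rangle,
\]
where $\overline\Sigma$ may be taken diagonal. Move the Hadamard mask across the inner product using $\langle A, K^\top\odot B\rangle = \langle K^\top\odot A, B\rangle$, and move the diagonal mask similarly. This gives
\[
\mathrm dL=\bigl\langle K^\top\odot(V^\top\overline V)+\overline\Sigma,\; V^\top\mathrm dM V\bigr\rangle = \bigl\langle V(K^\top\odot(V^\top\overline V)+\overline\Sigma)V^\top,\;\mathrm dM\bigr\rangle ,
\]
which is the claimed formula.

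Two points of bookkeeping remain. First, $\mathrm dM$ is symmetric, so the gradient is determined only up to its symmetric part. The stated unsymmetrized form is the conventional representative, the one given by Giles; symmetrizing it produces the $S$ of Theorem~\ref{thm:eigh_backward_blocks}, because $K$ is antisymmetric. I would note this explicitly and state the result as the representative acting on $\mathrm dM$. Second, the diagonal of $K$ is zero, which matches $\Omega_{ii}=0$ from skew-symmetry, so no term is lost.

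The main obstacle is justifying differentiability of $V$, which holds only when the eigenvalues are distinct. I would restrict the statement to that case, where $K$ is finite by the definition in \eqref{eq:defK}, and invoke the standard analytic-perturbation result for simple eigenvalues. The sign convention of $\Omega$ and $K^\top$ is the other place where it is easy to slip, so I would check it on a $2\times2$ example.
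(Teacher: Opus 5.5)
Your argument is correct. The identity $V^\top \mathrm dM\,V=\Omega\Sigma-\Sigma\Omega+\mathrm d\Sigma$, the entrywise solve $\Omega=K^\top\odot(V^\top\mathrm dM\,V)$, and moving the Hadamard mask across the inner product together form the standard derivation in the cited work of Giles: first the differential, then the adjoint. The paper gives no proof of Proposition~\ref{prop:ed_gradient} beyond that citation.

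The only in-paper argument to compare with is the proof of Theorem~\ref{thm:eigh_backward_blocks}. It starts from the same identity but runs it in the dual direction. Instead of inverting for $\Omega$ in terms of $\mathrm dM$, it parametrizes $\mathrm dM$ by admissible pairs $(\Omega,D)$ and imposes $\langle G,\mathrm dM\rangle=\langle\overline V,V\Omega\rangle+\langle\overline\Sigma,D\rangle$ on a symmetric $G$. It then reads off $V^\top G V$ entrywise, setting the unconstrained entries to zero. That route never needs $V$ to be a differentiable function of $M$. It therefore still gives a well-defined (pseudo-)gradient at repeated eigenvalues, and it lands directly on the symmetric representative $V(S+\overline\Sigma)V^\top$.

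Your route needs simple eigenvalues, which the statement already assumes implicitly since $K$ is otherwise undefined. In exchange it shows that the formula is a genuine gradient, and it produces the unsymmetrized Giles form directly. Your remark that symmetrizing yields $S$ because $K^\top=-K$ is exactly what shows the two agree at nondegenerate points.
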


\begin{proof}[Proof of Theorem~\ref{thm:eigh_backward_blocks}]
We solve the minimization problem in
Definition~\ref{def:minimum-norm-evd-gradient}.
For the factor perturbations $dV=V\Omega$ and $d\Sigma=D$,
\[
    dM
    =dV\,\Sigma V^\top+V\,d\Sigma\,V^\top+V\Sigma\,dV^\top
    =V(\Omega\Sigma-\Sigma\Omega+D)V^\top.
\]
For a symmetric candidate $G$, set
\[
    H:=V^\top GV,
    \qquad
    B:=\frac12\bigl(V^\top\overline V-\overline V^\top V\bigr).
\]
Then $H^\top=H$ and $\|G\|_F=\|H\|_F$. Moreover,
\[
\begin{aligned}
    \langle G,dM\rangle_F
    &=\langle H\Sigma-\Sigma H,\Omega\rangle_F
      +\langle H,D\rangle_F,\\
    \langle\overline V,V\Omega\rangle_F
    &=\langle B,\Omega\rangle_F,
\end{aligned}
\]
where the second identity follows from $\Omega^\top=-\Omega$.
Thus, the constraints in the definition are equivalent to
\[
    \langle H\Sigma-\Sigma H-B,\Omega\rangle_F
    +\langle H-\overline\Sigma,D\rangle_F=0
\]
for every diagonal $D$ and every skew-symmetric $\Omega$
with $\Omega_{ij}=0$ whenever $\sigma_i=\sigma_j$.

Taking $\Omega=0$ and varying $D$ gives
\[
    H_{ii}=\overline\Sigma_{ii},
    \qquad 1\le i\le n.
\]
Taking $D=0$ and varying the permitted entries of $\Omega$ gives
\[
    (\sigma_j-\sigma_i)H_{ij}=B_{ij}
    \qquad\text{whenever }\sigma_i\ne\sigma_j.
\]
Consequently, the constraints are equivalent to
\[
    H_{ii}=\overline\Sigma_{ii},
    \qquad
    H_{ij}=\frac{B_{ij}}{\sigma_j-\sigma_i}
    \quad\text{if }\sigma_i\ne\sigma_j,
\]
with no restrictions on $H_{ij}$ when $i\ne j$ and
$\sigma_i=\sigma_j$. The prescribed entries are consistent
with symmetry because $B^\top=-B$.

Since
\[
    \|G\|_F^2=\|H\|_F^2
    =\sum_i H_{ii}^2+2\sum_{i<j}H_{ij}^2,
\]
the unique minimum-norm solution is obtained by setting
all unrestricted entries to zero. Hence
\[
    H_\star=K^\top\odot B+\overline\Sigma
    =S+\overline\Sigma,
\]
and therefore
\[
    \overline M_\star
    =VH_\star V^\top
    =V(S+\overline\Sigma)V^\top,
\]
as claimed.
\end{proof}

\section{Existing formulas for SVD Derivatives}
\label{appendix:existing}

\begin{theorem}[Backward differentiation of low-rank SVD]
\label{thm:lowrank_svd_backward}
Let $A \in \mathbb{R}^{m \times n}$ admit a rank-$k$ singular value decomposition
\[
A = U S V^\top,
\qquad
U \in \mathbb{R}^{m \times k},\;
S = \mathrm{diag}(s_1,\dots,s_k),\;
V \in \mathbb{R}^{n \times k},
\]
with $U^\top U = V^\top V = I_k$ and $s_i > 0$ pairwise distinct.

Let $\overline{U}, \overline{S}, \overline{V}$ denote gradients
corresponding to $U,S,V$, respectively.
Define the matrix $F \in \mathbb{R}^{k \times k}$ as
\begin{equation}
\label{eq:defF}
F_{ij}
=
\begin{cases}
\dfrac{1}{s_j^2 - s_i^2}, & i \neq j,\\[6pt]
0, & i=j.
\end{cases}
\end{equation}

Then the gradient of a scalar loss $\mathcal{L}$ with respect to $A$ is given by
\begin{align*}
\overline{A}
&=
\Bigl[
U \bigl( F \odot ( U^\top \overline{U} - \overline{U}^\top U ) \bigr) S
+
(I_m - UU^\top)\,\overline{U}\,S^{-1}
\Bigr] V^\top
\\[4pt]
&\quad
+
U \bigl( I_k \odot \overline{S} \bigr) V^\top
\\[4pt]
&\quad
+
U \Bigl[
S \bigl( F \odot ( V^\top \overline{V} - \overline{V}^\top V ) \bigr)V^\top
+
S^{-1}\,\overline{V}^\top (I_n - VV^\top)
\Bigr].
\end{align*}
\end{theorem}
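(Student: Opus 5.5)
The plan is the standard forward-then-adjoint computation for the SVD: first express the differentials $\mathrm dU,\mathrm dS,\mathrm dV$ as linear functions of $\mathrm dA$ (forward mode), then substitute them into $\mathrm d\mathcal L=\langle\overline U,\mathrm dU\rangle+\langle\overline S,\mathrm dS\rangle+\langle\overline V,\mathrm dV\rangle$ and move every linear operator onto the incoming gradients (reverse mode). Because the $s_i$ are positive and pairwise distinct, $U,S,V$ depend smoothly on $A$ along the rank-$k$ manifold. The component $(I_m-UU^\top)\,\mathrm dA\,(I_n-VV^\top)$ never enters the first-order variation of the factors, so the formula is valid as an identity for the differential $\mathrm d\mathcal L$.

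\textbf{Forward step.} Differentiating $A=USV^\top$ gives $\mathrm dA=\mathrm dU\,SV^\top+U\,\mathrm dS\,V^\top+US\,\mathrm dV^\top$. Differentiating the constraints $U^\top U=V^\top V=I_k$ shows that $\Omega_U:=U^\top\mathrm dU$ and $\Omega_V:=V^\top\mathrm dV$ are skew-symmetric. Set $\mathrm dP:=U^\top\mathrm dA\,V$. Projecting the differential gives
\[
\mathrm dP=\Omega_U S+\mathrm dS-S\Omega_V .
\]
The diagonal of this identity yields $\mathrm dS=I_k\odot\mathrm dP$. For $i\ne j$, pair the $(i,j)$ and $(j,i)$ entries and use skew-symmetry. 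This gives a $2\times2$ linear system for $(\Omega_U)_{ij}$ and $(\Omega_V)_{ij}$ with determinant proportional to $s_j^2-s_i^2\neq0$. Solving it yields
\[
\Omega_U=F\odot(\mathrm dP\,S+S\,\mathrm dP^\top),\qquad \Omega_V=F\odot(S\,\mathrm dP+\mathrm dP^\top S).
\]
The out-of-subspace parts follow by projecting with $I_m-UU^\top$ and $I_n-VV^\top$:
\[
(I_m-UU^\top)\mathrm dU=(I_m-UU^\top)\,\mathrm dA\,VS^{-1},\qquad (I_n-VV^\top)\mathrm dV=(I_n-VV^\top)\,\mathrm dA^\top US^{-1}.
\]
Then $\mathrm dU=U\Omega_U+(I_m-UU^\top)\mathrm dU$, and similarly for $\mathrm dV$.

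\textbf{Reverse step.} Substitute these expressions into $\mathrm d\mathcal L$ and apply three adjoint rules:
\begin{itemize}
\item $\langle X,Y\odot Z\rangle=\langle X\odot Y,Z\rangle$;
\item $F^\top=-F$;
\item $\langle X,\Omega\rangle=\langle\frac12(X-X^\top),\Omega\rangle$ for skew $\Omega$.
\end{itemize}
For example, $\langle\overline U,U\Omega_U\rangle=\langle F\odot U^\top\overline U,\ \mathrm dP\,S+S\,\mathrm dP^\top\rangle$. Expanding, using $F^\top=-F$ to collect the two terms, and then writing $\mathrm dP=U^\top\mathrm dA\,V$ gives the contribution
\[
\bigl\langle U\bigl(F\odot(U^\top\overline U-\overline U^\top U)\bigr)SV^\top,\ \mathrm dA\bigr\rangle .
\]
The $V$-part is handled symmetrically and gives $US\bigl(F\odot(V^\top\overline V-\overline V^\top V)\bigr)V^\top$. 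The projector terms transpose directly to $(I_m-UU^\top)\overline US^{-1}V^\top$ and $US^{-1}\overline V^\top(I_n-VV^\top)$. The diagonal term gives $U(I_k\odot\overline S)V^\top$. Summing all contributions yields the stated $\overline A$.

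I expect the main obstacle to be the sign and transpose bookkeeping in the $F$-terms. Getting the $2\times2$ solve right, and then checking that the antisymmetrization $U^\top\overline U-\overline U^\top U$ appears with the correct orientation relative to $F$ (rather than $F^\top$), is where errors typically occur. I would verify these by checking the $k=2$ case entrywise before writing the general Hadamard-form computation.
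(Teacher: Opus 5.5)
Your proposal is correct and follows essentially the same approach as the paper: the paper gives no derivation of its own and cites Townsend's report, whose argument is exactly this forward-then-adjoint computation. That computation solves for $\Omega_U$, $\Omega_V$, $\mathrm dS$ and the projected parts $(I_m-UU^\top)\mathrm dU$, $(I_n-VV^\top)\mathrm dV$, then transposes each linear map onto $\overline U,\overline S,\overline V$; your $2\times2$ solve together with $F^\top=-F$ gives the $F$-terms and the $+(I_m-UU^\top)\overline U S^{-1}V^\top$ term exactly as stated.
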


\begin{proof}
    In work~\cite{townsend2016differentiating}.
\end{proof}

\begin{corollary}[Full-rank SVD backward]
\label{cor:fullrank_svd_backward} Let $m \ge n$.
If $k = n$, then the orthogonal complements vanish,
\[
I_n - VV^\top = 0,
\]
and the backward formula simplifies to
\begin{align}
\overline{A}
&=
U \bigl( F \odot ( U^\top \overline{U} - \overline{U}^\top U ) \bigr) S V^\top
- (I_m - UU^\top)\overline{U}S^{-1}V^\top
\\
&\quad
+
U \bigl( I_k \odot \overline{S} \bigr) V^\top
+
U S \bigl( F \odot ( V^\top \overline{V} - \overline{V}^\top V ) \bigr) V^\top.
\end{align}
\end{corollary}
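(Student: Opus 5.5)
The plan is to obtain the Corollary by specializing Theorem~\ref{thm:lowrank_svd_backward} to $k=n$ and matching the result term by term with the displayed formula. First I check that the hypotheses apply. With $m\ge n$ and $k=n$, the factor $V\in\mathbb{R}^{n\times n}$ has orthonormal columns, so it is square orthogonal. Hence $VV^\top=I_n$, and the orthogonal-complement projector $I_n-VV^\top$ is exactly zero, as the Corollary asserts. The matrix $S$ stays invertible because all $s_i>0$, and $F$ is unchanged.

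Next I expand the three lines of Theorem~\ref{thm:lowrank_svd_backward} one at a time.
\begin{itemize}
\item The first bracket, multiplied on the right by $V^\top$, gives $U\bigl(F\odot(U^\top\overline U-\overline U^\top U)\bigr)SV^\top$ together with the $U$-complement contribution $(I_m-UU^\top)\overline U S^{-1}V^\top$.
\item The middle line $U(I_k\odot\overline S)V^\top$ carries over verbatim.
\item In the last bracket, the summand $S^{-1}\overline V^\top(I_n-VV^\top)$ vanishes. What remains is $US\bigl(F\odot(V^\top\overline V-\overline V^\top V)\bigr)V^\top$.
\end{itemize}
Collecting these terms reproduces every term of the Corollary except possibly the sign of the $U$-complement term.

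The main obstacle is exactly that sign. The stated formula has $-(I_m-UU^\top)\overline U S^{-1}V^\top$, while the substitution above produces it with whatever sign the theorem carries. Since Theorem~\ref{thm:lowrank_svd_backward} is only cited from~\cite{townsend2016differentiating}, I would pin the sign down independently with a first-order check. Take a perturbation $\mathrm dA=P_\perp Z V^\top$ with $P_\perp:=I_m-UU^\top$. Along this direction $\mathrm dS=0$ and $\mathrm dV=0$. The only first-order change is $\mathrm dU=P_\perp \mathrm dA\,VS^{-1}$, so
\[
\mathrm d\mathcal L=\langle\overline U,\mathrm dU\rangle=\bigl\langle P_\perp\overline U S^{-1}V^\top,\mathrm dA\bigr\rangle .
\]
Comparing this with $\langle\overline A,\mathrm dA\rangle$ fixes the sign of this term.

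I then split into cases.
\begin{itemize}
\item For square $A$ ($m=n$), $P_\perp=0$. The term is identically zero, so the Corollary holds exactly as written regardless of sign.
\item For $m>n$, I would run the first-order check above. If it yields the stated minus sign, the proof is complete. If it does not, the identity can only hold under an additional condition that makes the term vanish, namely $P_\perp\overline U=0$, i.e.\ the incoming gradient $\overline U$ lies in $\operatorname{range}(U)$. I would state that condition explicitly as the hypothesis under which the displayed formula is established.
\end{itemize}
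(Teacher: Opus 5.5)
Your reduction is the paper's route: the paper gives no argument beyond substituting $VV^\top=I_n$ into Theorem~\ref{thm:lowrank_svd_backward}. You are also right that the sign of the $U$-complement term is where this breaks. The gap is that you leave the sign open, although it is already settled.

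The theorem does not carry an unknown sign. It has $+(I_m-UU^\top)\overline U S^{-1}$, so the substitution gives $+(I_m-UU^\top)\overline U S^{-1}V^\top$. Your own first-order computation gives the same plus sign: with $P_\perp=I_m-UU^\top$ you found $\mathrm d\mathcal L=\langle P_\perp\overline U S^{-1}V^\top,\mathrm dA\rangle$. That check is decisive. Every other term of $\overline A$ has $U$ as its left factor, so $P_\perp\overline A$ equals the $U$-complement term exactly. Testing against all $\mathrm dA=P_\perp ZV^\top$ therefore determines that term completely. Your branch ``if it yields the stated minus sign'' never occurs.

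A minimal example confirms this. Take $m=2$, $n=k=1$, $A=(1,0)^\top$, $\mathcal L=\langle\overline U,U\rangle$ with $\overline U=(0,1)^\top$, and $\overline S=\overline V=0$. Near this point $U=A/\|A\|_2$, $S=\|A\|_2$, $V=1$. Hence $\mathcal L=a_2/\sqrt{a_1^2+a_2^2}$ for $A=(a_1,a_2)^\top$, and its gradient at $(1,0)^\top$ is $(0,1)^\top$. The $F$- and $\overline S$-terms vanish, so the displayed formula returns $-P_\perp\overline U S^{-1}V^\top=(0,-1)^\top$.

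So the corollary as written is false whenever $m>n$ and $P_\perp\overline U\neq 0$. The minus sign is a typo. The phrase ``the orthogonal complements vanish'' also overstates things: for $m>n$ only $I_n-VV^\top$ vanishes, while the $U$-complement survives with a plus sign.

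You should state the conclusion outright rather than as a conditional hypothesis. The formula with $+(I_m-UU^\top)\overline U S^{-1}V^\top$ holds for all $m\ge n$. The displayed version is correct only when $m=n$ (where the term is zero) or when $P_\perp\overline U=0$.
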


\section{Backward differentiation of SVD in degenerate case}

\begin{theorem}[SVD backward formula in the degenerate case]
\label{theorem:svd_degenerate}
Let
\[
    M=U\Sigma V^\top,
    \qquad
    \Sigma=\operatorname{diag}(\sigma_1,\dots,\sigma_n),
\]
where equal singular values may have multiplicities. Define
\[
    A:=U^\top \overline U,
    \qquad
    B:=V^\top \overline V.
\]
Introduce the matrices \(F,G\in\mathbb R^{n\times n}\) by
\[
    F_{ij}
    =
    \begin{cases}
        \dfrac{1}{\sigma_j^2-\sigma_i^2}, & \sigma_i\neq \sigma_j,\\[1.2ex]
        0, & \sigma_i=\sigma_j,
    \end{cases}
\]
and
\[
    G_{ij}
    =
    \begin{cases}
        \dfrac{1}{2\sigma_i}, & \sigma_i=\sigma_j>0,\\[1.2ex]
        0, & \sigma_i\neq \sigma_j \ \text{or}\ \sigma_i=\sigma_j=0.
    \end{cases}
\]
Then
\[
    \overline M
    =
    U
    \left[
        F\odot
        \Big(
            (A-A^\top)\Sigma
            +
            \Sigma(B-B^\top)
        \Big)
        +
        \overline\Sigma
        +
        G\odot
        \operatorname{skew}(A-B)
    \right]
    V^\top,
\]
\end{theorem}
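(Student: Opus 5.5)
The plan is to mirror the proof of Theorem~\ref{thm:eigh_backward_blocks}. First I would make the statement precise as a pseudo-gradient, in the spirit of Definition~\ref{def:minimum-norm-evd-gradient}. I take $\overline M$ to be the minimum-Frobenius-norm matrix $G$ satisfying
\[
\langle G,\mathrm dM\rangle_F=\langle\overline U,U\Omega_U\rangle_F+\langle\overline V,V\Omega_V\rangle_F+\langle\overline\Sigma,D\rangle_F
\]
for all admissible factor perturbations. These are $\mathrm dU=U\Omega_U$ and $\mathrm dV=V\Omega_V$ with $\Omega_U,\Omega_V$ skew-symmetric, and $\mathrm d\Sigma=D$ diagonal. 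I work in the square case, so that $U$ and $V$ are orthogonal. Rotations that leave $M$ invariant are excluded: within a block $\sigma_i=\sigma_j>0$ this means $\Omega_U=\Omega_V$, and within the zero block the rotations are arbitrary. Their contribution is discarded by definition.

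Differentiating gives $\mathrm dM=U(\Omega_U\Sigma+D-\Sigma\Omega_V)V^\top$. Set $H:=U^\top GV$, so that $\|G\|_F=\|H\|_F$. The constraint then becomes
\[
\langle H\Sigma-A,\Omega_U\rangle_F-\langle \Sigma H+B,\Omega_V\rangle_F+\langle H-\overline\Sigma,D\rangle_F=0 .
\]
Since $\Omega_U$ and $\Omega_V$ are skew, only the skew parts of $H\Sigma-A$ and $\Sigma H+B$ enter. I then read off the entries of $H$ block by block.

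\textbf{Diagonal.} Varying $D$ forces $H_{ii}=\overline\Sigma_{ii}$.

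\textbf{Pairs with $\sigma_i\neq\sigma_j$.} Here $(\Omega_U)_{ij}$ and $(\Omega_V)_{ij}$ vary independently. This gives two linear equations in the unknowns $H_{ij},H_{ji}$:
\[
H_{ij}\sigma_j-H_{ji}\sigma_i=(A-A^\top)_{ij},\qquad \sigma_iH_{ij}-\sigma_jH_{ji}=-(B-B^\top)_{ij}.
\]
The determinant is $\sigma_i^2-\sigma_j^2\neq0$. Eliminating should reproduce $H_{ij}=F_{ij}\bigl((A-A^\top)\Sigma+\Sigma(B-B^\top)\bigr)_{ij}$, and I would check the factor conventions for $\tfrac12(A-A^\top)$ at this step.

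\textbf{Pairs in a block $\sigma_i=\sigma_j=\sigma>0$.} Split each rotation into $\Omega_U=\Omega+\Psi$ and $\Omega_V=\Psi-\Omega$. The invariant direction $\Psi$ is excluded. The remaining direction $\Omega$ produces $\mathrm dM$-block $2\sigma\Omega$. Pairing it against $H$ yields $2\sigma\,\mathrm{skew}(H)_{ij}=\mathrm{skew}(A-B)_{ij}$. The symmetric off-diagonal part of $H$ in the block is unconstrained, so minimality sets it to zero, giving $H_{ij}=G_{ij}\,\mathrm{skew}(A-B)_{ij}$.

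\textbf{Zero block.} Every perturbation there is either invariant or produces no constraint, so $H_{ij}=0$ off the diagonal.

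To finish, I would show that the minimizer is unique. The constraints fix some entries of $H$ and leave others free, so the norm $\sum_i H_{ii}^2+\sum_{i\neq j}H_{ij}^2$ is minimized exactly when every free entry is set to zero. Assembling the blocks gives the stated bracket, and then $\overline M=UHV^\top$.

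The main obstacle is the degenerate positive block. There one has to justify that the correct quotient is by the diagonal rotations $\Omega_U=\Omega_V$, and one has to handle the normalization of the skew part, including whether $\mathrm{skew}$ carries the factor $\tfrac12$, so that the coefficient comes out as $1/(2\sigma)$. A secondary issue is making the rectangular case $m>n$ consistent. I would first treat the square case, and then either absorb the extra columns of $U$ through the term $(I-UU^\top)\overline U\Sigma^{-1}$ or reduce to the square case by a QR reduction.
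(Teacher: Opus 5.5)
Your proposal is correct, but it reaches the formula by a different route from the paper. The paper, through its block lemma, argues in the forward direction. It inverts the linearized factorization, solving the $2\times2$ systems for $(\Omega_U)_{ij},(\Omega_V)_{ij}$ in terms of $C=U^\top\mathrm dM\,V$. It then substitutes these into $\mathrm d\mathcal L=\langle A,\Omega_U\rangle+\langle B,\Omega_V\rangle+\langle\overline\Sigma,\mathrm d\Sigma\rangle$ and reads off the coefficient of $C$. In a degenerate block it uses exactly your split, $(\Omega_U)_{ii}=T_i+\tfrac12 Z_i$ and $(\Omega_V)_{ii}=T_i-\tfrac12 Z_i$. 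It drops $T_i$ by \emph{assuming} the loss is gauge invariant. It also implicitly restricts to perturbations with $C_{ii}$ equal to diagonal plus skew, by writing $(\mathrm d\Sigma)_{ii}=\operatorname{diag}(\operatorname{diag}(C_{ii}))$.

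You work with the transpose map instead. You parametrize $\mathrm dM$ by admissible factor perturbations and pick the minimum-norm $H=U^\top GV$ satisfying the adjoint constraints, as in Definition~\ref{def:minimum-norm-evd-gradient}. This gives $\overline M$ a precise meaning at degenerate points, proves uniqueness, and makes both of the paper's implicit moves explicit:
\begin{itemize}
\item The off-diagonal symmetric part of a degenerate block is an unconstrained component of $H$, and minimality sets it to zero. Strictly, the free quantity there is $H_{ij}+H_{ji}$, not an individual entry, as your block paragraph already recognizes.
\item The gauge direction is removed by definition. The paper instead removes it by a hypothesis on $\ell$ that arbitrary incoming $\overline U,\overline V$ need not satisfy.
\end{itemize}
The price is that the real choice is not the quotient, which is canonical, but the complement you restrict to. Taking $\Omega_V=-\Omega_U$ in the block gives $\tfrac{1}{2\sigma}\operatorname{skew}(A-B)$. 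Taking, say, $\Omega_V=0$ would give $\tfrac{1}{\sigma}\operatorname{skew}(A)$. So the choice must be written into your definition. Under the paper's invariance hypothesis, $\operatorname{skew}(A+B)=0$ on the block and all choices agree.

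On the points you flagged:
\begin{itemize}
\item Elimination gives $H_{ij}=\bigl(\sigma_j(A-A^\top)_{ij}+\sigma_i(B-B^\top)_{ij}\bigr)/(\sigma_j^2-\sigma_i^2)$. This is exactly the $F$-term, with no extra factor $\tfrac12$.
\item The coefficient $1/(2\sigma)$ is correct under the convention $\operatorname{skew}(X)=\tfrac12(X-X^\top)$. That convention is implicit in the paper's $Z_i=\sigma_i^{-1}\operatorname{skew}(C_{ii})$.
\item You can drop the $m>n$ discussion. The paper's lemma is stated for square $M$ with orthogonal $U,V$. For $m>n$ the formula would need an additional term $(I-UU^\top)\overline U\Sigma^{-1}V^\top$, so that case is a different statement, not one to be made consistent.
\end{itemize}
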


\begin{lemma}
Let \(M\in\mathbb R^{n\times n}\) have a full singular value decomposition
\[
    M=U\Sigma V^\top ,
\]
where \(U,V\in\mathbb R^{n\times n}\) are orthogonal. Assume that the singular
values are grouped into distinct blocks:
\[
    U=[U_1~\cdots~U_k],
    \qquad
    V=[V_1~\cdots~V_k],
\]
and
\[
    \Sigma
    =
    \operatorname{diag}
    \left(
        \sigma_1 I_{r_1},
        \dots,
        \sigma_k I_{r_k}
    \right),
    \qquad
    \sigma_i\neq \sigma_j \quad (i\neq j),
\]
Then the adjoint with respect to \(M\) has the form
\[
    \overline M=URV^\top ,
\]
where \(R\) is the block matrix with blocks \(R_{ij}\in\mathbb R^{r_i\times r_j}\)
defined as follows. For \(i\neq j\),
\[
    R_{ij}
    =
    \frac{
        \sigma_j
        \left(
            U_i^\top \overline U_j
            -
            \overline U_i^\top U_j
        \right)
        +
        \sigma_i
        \left(
            V_i^\top \overline V_j
            -
            \overline V_i^\top V_j
        \right)
    }{
        \sigma_j^2-\sigma_i^2
    },
    \qquad i\neq j.
\]

For the diagonal blocks with \(\sigma_i>0\),
\[
    R_{ii}
    =
    \overline\Sigma_{ii}
    +
    \frac{1}{2\sigma_i}
    \operatorname{skew}
    \left(
        U_i^\top \overline U_i
        -
        V_i^\top \overline V_i
    \right).
\]
For a block with \(\sigma_i=0\), we take
\[
    R_{ii}
    =
    \overline\Sigma_{ii}.
\]
\end{lemma}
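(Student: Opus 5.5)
The plan is to work in the singular bases, mirroring the proof of Theorem~\ref{thm:eigh_backward_blocks}. I parametrize admissible factor perturbations as $\mathrm dU=U\Omega_U$, $\mathrm dV=V\Omega_V$ with $\Omega_U,\Omega_V$ skew-symmetric, and $\mathrm d\Sigma=D$ diagonal. Differentiating $M=U\Sigma V^\top$ gives
\[
\mathrm dM=U P V^\top,\qquad P:=\Omega_U\Sigma+D-\Sigma\Omega_V .
\]
On the loss side I have
\[
\mathrm d\mathcal L=\langle U^\top\overline U,\Omega_U\rangle+\langle V^\top\overline V,\Omega_V\rangle+\langle\overline\Sigma,D\rangle .
\]
I write $\overline M=URV^\top$, so that $\langle\overline M,\mathrm dM\rangle=\langle R,P\rangle$. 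The task then reduces to finding a matrix $R$ with $\langle R,P\rangle=\mathrm d\mathcal L$ for all admissible $(\Omega_U,\Omega_V,D)$. Since the equations decouple by blocks, I will determine $R$ block by block.

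\textbf{Off-diagonal blocks.} For $i\neq j$, only the pair $(\Omega_{U,ij},\Omega_{V,ij})$ enters, together with its transposed partners $\Omega_{U,ji}=-\Omega_{U,ij}^\top$ and $\Omega_{V,ji}=-\Omega_{V,ij}^\top$. These give
\[
P_{ij}=\sigma_j\Omega_{U,ij}-\sigma_i\Omega_{V,ij},\qquad P_{ji}^\top=-\sigma_i\Omega_{U,ij}+\sigma_j\Omega_{V,ij}.
\]
This $2\times2$ linear map (acting blockwise on matrices) has determinant $\sigma_j^2-\sigma_i^2\neq0$, so it is invertible. I will match the coefficients of $\Omega_{U,ij}$ and $\Omega_{V,ij}$ in $\langle R_{ij},P_{ij}\rangle+\langle R_{ji},P_{ji}\rangle$ against the corresponding terms of $\mathrm d\mathcal L$. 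On the loss side these coefficients are the skew combinations $U_i^\top\overline U_j-\overline U_i^\top U_j$, and analogously for $V$, which arise because $\Omega$ is skew. Solving the $2\times2$ system yields the stated quotient by $\sigma_j^2-\sigma_i^2$. As a check, in the fully nondegenerate case these blocks should reproduce Corollary~\ref{cor:fullrank_svd_backward}.

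\textbf{Diagonal blocks with $\sigma_i>0$.} Here $P_{ii}=\sigma_i(\Omega_{U,ii}-\Omega_{V,ii})+D_{ii}$. The combination $\Omega_{U,ii}+\Omega_{V,ii}$ is the gauge direction: simultaneous rotation of $U_i$ and $V_i$ leaves $M$ unchanged. Following Definition~\ref{def:minimum-norm-evd-gradient}, its contribution to the loss differential is disregarded. I will substitute $\Omega_{U,ii}=\tfrac12(\Theta+\Phi)$ and $\Omega_{V,ii}=\tfrac12(\Phi-\Theta)$, where $\Phi$ is the gauge part, and retain only the $\Theta$-dependence. This gives a skew contribution $\tfrac12\langle\operatorname{skew}(U_i^\top\overline U_i-V_i^\top\overline V_i),\Theta\rangle$, which must equal $\sigma_i\langle R_{ii},\Theta\rangle$. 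The diagonal entries of $R_{ii}$ are fixed by $D$ to equal $\overline\Sigma_{ii}$. The off-diagonal symmetric part of $R_{ii}$ is unconstrained, and I take it to be zero, which is the minimum-norm choice. The result is the stated $R_{ii}$.

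\textbf{Zero blocks and the main obstacle.} For $\sigma_i=0$, $P_{ii}=D_{ii}$, so only $\overline\Sigma_{ii}$ survives. The main obstacle is not the algebra but making precise which perturbations count as admissible and in what sense $\overline M$ is ``the'' adjoint. The lemma implicitly adopts the same convention as Definition~\ref{def:minimum-norm-evd-gradient}: gauge directions are excluded, rotations within zero blocks are treated as invisible, and $R$ is taken of minimum norm. I would first state this convention explicitly as a definition and verify uniqueness exactly as in the proof of Theorem~\ref{thm:eigh_backward_blocks}, after which the blockwise computations above are routine.
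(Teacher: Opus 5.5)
Your proposal is correct and follows essentially the same route as the paper: the same block relation $C=\Omega_U\Sigma+\mathrm d\Sigma-\Sigma\Omega_V$, the same invertible $2\times 2$ system with determinant $\sigma_j^2-\sigma_i^2$ for the off-diagonal blocks, and the same split of each diagonal block into a discarded gauge rotation and a relative rotation. The only differences are minor. You solve the adjoint (coefficient-matching) system instead of inverting the forward map and substituting, which is the same symmetric system. You also state explicitly the minimum-norm convention for the unconstrained symmetric off-diagonal part of $R_{ii}$; the paper adopts that convention only implicitly, when it writes $(\mathrm d\Sigma)_{ii}=\operatorname{diag}(\operatorname{diag}(C_{ii}))$ and $Z_i=\sigma_i^{-1}\operatorname{skew}(C_{ii})$.
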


\begin{proof}
Let
\[
    C:=U^\top \mathrm dM V,
    \qquad
    \Omega_U:=U^\top \mathrm dU,
    \qquad
    \Omega_V:=V^\top \mathrm dV.
\]
Since \(U\) and \(V\) are orthogonal, we have
\[
    \Omega_U^\top=-\Omega_U,
    \qquad
    \Omega_V^\top=-\Omega_V.
\]

Differentiating
\[
    M=U\Sigma V^\top
\]
gives
\[
    \mathrm dM
    =
    \mathrm dU\,\Sigma V^\top
    +
    U\,\mathrm d\Sigma\,V^\top
    +
    U\Sigma\,\mathrm dV^\top .
\]
Multiplying by \(U^\top\) from the left and by \(V\) from the right, we obtain
\[
    C
    =
    \Omega_U\Sigma+\mathrm d\Sigma-\Sigma\Omega_V.
\]
In block form this reads
\[
    C_{ij}
    =
    \sigma_j(\Omega_U)_{ij}
    -
    \sigma_i(\Omega_V)_{ij}
    +
    (\mathrm d\Sigma)_{ij}.
\]

For \(i\neq j\), the block \((\mathrm d\Sigma)_{ij}\) vanishes. Therefore,
\[
    C_{ij}
    =
    \sigma_j(\Omega_U)_{ij}
    -
    \sigma_i(\Omega_V)_{ij}.
\]
Using the skew-symmetry of \(\Omega_U\) and \(\Omega_V\), the transposed block equation is
\[
    C_{ji}^\top
    =
    -\sigma_i(\Omega_U)_{ij}
    +
    \sigma_j(\Omega_V)_{ij}.
\]
Thus, for each off-diagonal pair \(i\neq j\),
\[
    \begin{pmatrix}
        C_{ij}\\
        C_{ji}^\top
    \end{pmatrix}
    =
    \begin{pmatrix}
        \sigma_j I & -\sigma_i I\\
        -\sigma_i I & \sigma_j I
    \end{pmatrix}
    \begin{pmatrix}
        (\Omega_U)_{ij}\\
        (\Omega_V)_{ij}
    \end{pmatrix}.
\]
Since \(\sigma_i\neq \sigma_j\), this system is nonsingular, and hence
\[
    (\Omega_U)_{ij}
    =
    \frac{
        \sigma_j C_{ij}
        +
        \sigma_i C_{ji}^\top
    }{
        \sigma_j^2-\sigma_i^2
    },
\]
\[
    (\Omega_V)_{ij}
    =
    \frac{
        \sigma_i C_{ij}
        +
        \sigma_j C_{ji}^\top
    }{
        \sigma_j^2-\sigma_i^2
    }.
\]

Now compute the differential of the loss:
\[
    \mathrm d\mathcal L
    =
    \langle \overline U,\mathrm dU\rangle
    +
    \langle \overline\Sigma,\mathrm d\Sigma\rangle
    +
    \langle \overline V,\mathrm dV\rangle .
\]
Define
\[
    A:=U^\top \overline U,
    \qquad
    B:=V^\top \overline V.
\]
The contributions from the rotations inside the \(U\)- and \(V\)-coordinates are
\[
    \langle \overline U,\mathrm dU\rangle
    =
    \langle U^\top \overline U,\Omega_U\rangle
    =
    \langle A,\Omega_U\rangle,
\]
and
\[
    \langle \overline V,\mathrm dV\rangle
    =
    \langle V^\top \overline V,\Omega_V\rangle
    =
    \langle B,\Omega_V\rangle.
\]

Fix \(i\neq j\). Since \(\Omega_U\) is block skew-symmetric,
\[
    (\Omega_U)_{ji}=-(\Omega_U)_{ij}^\top .
\]
Therefore the two blocks \((i,j)\) and \((j,i)\) contribute
\[
    \langle A_{ij},(\Omega_U)_{ij}\rangle
    +
    \langle A_{ji},(\Omega_U)_{ji}\rangle
    =
    \left\langle
        A_{ij}-A_{ji}^\top,
        (\Omega_U)_{ij}
    \right\rangle .
\]
Similarly,
\[
    \langle B_{ij},(\Omega_V)_{ij}\rangle
    +
    \langle B_{ji},(\Omega_V)_{ji}\rangle
    =
    \left\langle
        B_{ij}-B_{ji}^\top,
        (\Omega_V)_{ij}
    \right\rangle .
\]

Substituting the formulas for \((\Omega_U)_{ij}\) and \((\Omega_V)_{ij}\), we get
\[
\begin{aligned}
&\left\langle
    A_{ij}-A_{ji}^\top,
    (\Omega_U)_{ij}
\right\rangle
+
\left\langle
    B_{ij}-B_{ji}^\top,
    (\Omega_V)_{ij}
\right\rangle
\\
&=
\left\langle
    A_{ij}-A_{ji}^\top,
    \frac{
        \sigma_j C_{ij}
        +
        \sigma_i C_{ji}^\top
    }{
        \sigma_j^2-\sigma_i^2
    }
\right\rangle
+
\left\langle
    B_{ij}-B_{ji}^\top,
    \frac{
        \sigma_i C_{ij}
        +
        \sigma_j C_{ji}^\top
    }{
        \sigma_j^2-\sigma_i^2
    }
\right\rangle .
\end{aligned}
\]
Collecting the coefficient of \(C_{ij}\), we obtain
\[
    R_{ij}
    =
    \frac{
        \sigma_j(A_{ij}-A_{ji}^\top)
        +
        \sigma_i(B_{ij}-B_{ji}^\top)
    }{
        \sigma_j^2-\sigma_i^2
    },
    \qquad i\neq j.
\]

It remains to identify the diagonal block. For the \(i\)-th diagonal block,
\[
    C_{ii}
    =
    \sigma_i\left((\Omega_U)_{ii}-(\Omega_V)_{ii}\right)
    +
    \mathrm d\sigma_i I_{r_i}.
\]
Set
\[
    Z_i:=(\Omega_U)_{ii}-(\Omega_V)_{ii}.
\]
Since both \((\Omega_U)_{ii}\) and \((\Omega_V)_{ii}\) are skew-symmetric,
\(Z_i\) is skew-symmetric. Hence
\[
    C_{ii}
    =
    (\mathrm d\Sigma)_{ii}
    +
    \sigma_i Z_i,
    \qquad
    Z_i^\top=-Z_i.
\]
Here \((\mathrm d\Sigma)_{ii}\) is diagonal, because it contains the
differentials of the singular values in the \(i\)-th block.

Therefore,
\[
    (\mathrm d\Sigma)_{ii}
    =
    \operatorname{diag}(\operatorname{diag}(C_{ii})),
\]
and, for \(\sigma_i>0\),
\[
    Z_i
    =
    \frac{1}{\sigma_i}\operatorname{skew}(C_{ii}).
\]

The diagonal contribution to the differential is
\[
\begin{aligned}
    &\langle A_{ii},(\Omega_U)_{ii}\rangle
    +
    \langle B_{ii},(\Omega_V)_{ii}\rangle
    +
    \langle \overline\Sigma_{ii},(\mathrm d\Sigma)_{ii}\rangle .
\end{aligned}
\]
Write
\[
    T_i:=\frac{1}{2}\left((\Omega_U)_{ii}+(\Omega_V)_{ii}\right).
\]
Then
\[
    (\Omega_U)_{ii}=T_i+\frac12 Z_i,
    \qquad
    (\Omega_V)_{ii}=T_i-\frac12 Z_i.
\]
Thus
\[
\begin{aligned}
    &\langle A_{ii},(\Omega_U)_{ii}\rangle
    +
    \langle B_{ii},(\Omega_V)_{ii}\rangle
    \\
    &=
    \langle A_{ii}+B_{ii},T_i\rangle
    +
    \frac12\langle A_{ii}-B_{ii},Z_i\rangle .
\end{aligned}
\]
The common rotation \(T_i\) is a gauge direction inside the degenerate singular
subspace. Since this direction does not change \(M\), the loss must be
invariant with respect to it, and hence its contribution vanishes:
\[
    \langle A_{ii}+B_{ii},T_i\rangle=0.
\]
Therefore the observable diagonal contribution is
\[
    \frac12\langle A_{ii}-B_{ii},Z_i\rangle
    +
    \langle \overline\Sigma_{ii},(\mathrm d\Sigma)_{ii}\rangle .
\]

Substituting
\[
    Z_i=\frac{1}{\sigma_i}\operatorname{skew}(C_{ii}),
    \qquad
    (\mathrm d\Sigma)_{ii}
    =
    \operatorname{diag}(\operatorname{diag}(C_{ii})),
\]
we obtain
\[
\begin{aligned}
    &\frac12\langle A_{ii}-B_{ii},Z_i\rangle
    +
    \langle \overline\Sigma_{ii},(\mathrm d\Sigma)_{ii}\rangle
    \\
    &=
    \left\langle
        \frac{1}{2\sigma_i}\operatorname{skew}(A_{ii}-B_{ii}),
        C_{ii}
    \right\rangle
    +
    \langle \overline\Sigma_{ii},C_{ii}\rangle .
\end{aligned}
\]
Hence, for \(\sigma_i>0\),
\[
    R_{ii}
    =
    \overline\Sigma_{ii}
    +
    \frac{1}{2\sigma_i}
    \operatorname{skew}(A_{ii}-B_{ii}).
\]

If \(\sigma_i=0\), then the relative rotation \(Z_i\) is not observable from
\(M\), because it is multiplied by \(\sigma_i\). Thus only the singular-value
part contributes, and we take
\[
    R_{ii}
    =
    \overline\Sigma_{ii}.
\]
Therefore,
\[
    \mathrm d\mathcal L
    =
    \langle URV^\top,\mathrm dM\rangle,
\]
which proves
\[
    \overline M=URV^\top .
\]
\end{proof}

\section{Complete SVD Forward and Backward Pipeline}
\label{appendix:pipeline}

Algorithm~\ref{alg:svd_pipeline} combines the CANS SVD forward pass
with the regularized EVD and polar backward formulas.
The backward pass is a custom rule evaluated from the returned
factors, with $W=UV^\top$, and reuses the eigenpairs computed
in the forward pass. At rank-deficient inputs, this specifies
a regularized backward convention.
We write $A_{\mathrm{sym}}=(A+A^\top)/2$ and take
$\overline{\Sigma}$ to be diagonal.
Since the forward pass returns $V^\top$, its incoming gradient
is transposed to obtain $\overline V$.

\begin{algorithm}[H]
\caption{CANS SVD with a regularized backward pass}
\label{alg:svd_pipeline}
\begin{algorithmic}[1]
\REQUIRE
$M\in\mathbb{R}^{m\times n}$ ($m\geq n$),
tolerance $\varepsilon_{\texttt{QR}}>0$,
regularization parameter $\varepsilon>0$
\ENSURE
SVD factors $U,\Sigma,V^\top$ and a regularized backward rule
\STATE \textbf{Forward pass}
\STATE Compute $U,\Sigma,V^\top$ using
Algorithm~\ref{alg:svd_forward} with tolerance
$\varepsilon_{\texttt{QR}}$
\STATE Cache $M,U,\Sigma,V$ and return $U,\Sigma,V^\top$
\STATE \textbf{Backward pass}
\STATE Receive incoming gradients
$\overline U,\overline\Sigma,\overline V$
\STATE $W\gets UV^\top$
\STATE $\overline W\gets\overline U V^\top$
\STATE $\overline V_{\mathrm{tot}}
\gets\overline V+W^\top\overline U$
\STATE $B\gets\tfrac12
\bigl(V^\top\overline V_{\mathrm{tot}}
-\overline V_{\mathrm{tot}}^\top V\bigr)$
\STATE $(K_\varepsilon)_{ij}\gets
\dfrac{\sigma_i-\sigma_j}
{(\sigma_i-\sigma_j)^2+\varepsilon^{1/2}}$,
$1\leq i,j\leq n$
\STATE $\overline H\gets
V\bigl(\overline\Sigma+K_\varepsilon^\top\odot B\bigr)V^\top$
\STATE $R_\varepsilon\gets
V\Sigma\bigl(\Sigma^2+\varepsilon^{1/2}I\bigr)^{-1}V^\top$
\STATE $C_\varepsilon\gets
\bigl(\overline H
-R_\varepsilon M^\top\overline W R_\varepsilon\bigr)_{\mathrm{sym}}$
\STATE $(T_\varepsilon)_{ij}\gets
\dfrac{\sigma_i+\sigma_j}
{(\sigma_i+\sigma_j)^2+\varepsilon^{1/4}}$,
$1\leq i,j\leq n$
\STATE $X_\varepsilon\gets
V\bigl(T_\varepsilon\odot(V^\top C_\varepsilon V)\bigr)V^\top$
\STATE $\overline M_\varepsilon\gets
\overline W R_\varepsilon+2MX_\varepsilon$
\RETURN $\overline M_\varepsilon$
\end{algorithmic}
\end{algorithm}

\end{document}